\DeclareFontFamily{U}{rcjhbltx}{}
\DeclareFontShape{U}{rcjhbltx}{m}{n}{<->rcjhbltx}{}
\DeclareSymbolFont{hebrewletters}{U}{rcjhbltx}{m}{n}
\let\aleph\relax\let\beth\relax
\DeclareMathSymbol{\aleph}{\mathord}{hebrewletters}{39}
\DeclareMathSymbol{\beth}{\mathord}{hebrewletters}{98}
\newenvironment{enumerate*}%
  {\begin{enumerate}[(I)]%
    \setlength{\itemsep}{10pt}%
    \setlength{\parskip}{0pt}}%
  {\end{enumerate}}
\newtheorem{theorem}{Theorem}[section]
\newtheorem{proposition}[theorem]{Proposition}
\newtheorem{corollary}[theorem]{Corollary}
\newtheorem{lemma}[theorem]{Lemma}
\theoremstyle{definition}
\DeclareMathOperator{\vol}{vol}
\DeclareMathOperator{\Prog}{Prog}
\DeclareMathOperator{\Approx}{Approx}
\DeclareMathOperator{\lcm}{lcm}
\DeclareMathOperator{\acc}{acc}
\author[Vanshika Jain]{Vanshika Jain}
\address[]{Department of Mathematics, Princeton University, Princeton, NJ 08540, USA}
\email{vanshika@princeton.edu}
\author[Noah Kravitz]{Noah Kravitz}
\address[]{Department of Mathematics, Princeton University, Princeton, NJ 08540, USA}
\email{nkravitz@princeton.edu}
\title{Relative Lonely Runner spectra}
\begin{document}
\maketitle

\begin{abstract}
For a subtorus $T \subseteq (\mathbb{R}/\mathbb{Z})^n$, let $D(T)$ denote the $L^\infty$-distance from $T$ to the point $(1/2, \ldots, 1/2)$.  For a subtorus $U \subseteq (\mathbb{R}/\mathbb{Z})^n$, define $\mathcal{S}_1(U)$, the \emph{Lonely Runner spectrum relative to $U$}, to be the set of all values of $D(T)$ as $T$ ranges over the $1$-dimensional subtori of $U$ not contained in the union of the coordinate hyperplanes of $(\mathbb{R}/\mathbb{Z})^n$.  The relative spectrum $\mathcal{S}_1((\mathbb{R}/\mathbb{Z})^n)$ is the ordinary Lonely Runner spectrum that has been studied previously.

Giri and the second author recently showed that the relative spectra $\mathcal{S}_1(U)$ for $2$-dimensional subtori $U \subseteq (\mathbb{R}/\mathbb{Z})^n$ essentially govern the accumulation points of the Lonely Runner spectrum $\mathcal{S}_1((\mathbb{R}/\mathbb{Z})^n)$.  In the present work, we prove that such relative spectra $\mathcal{S}_1(U)$ have a very rigid arithmetic structure, and that one can explicitly find a complete characterization of each such relative spectrum with a finite calculation; carrying out this calculation for a few specific examples sheds light on previous constructions in the literature on the Lonely Runner Problem.
\end{abstract}

\section{Introduction}

\subsection{The Lonely Runner Conjecture}
The Lonely Runner Conjecture of Wills \cite{wills} and Cusick \cite{cusick} originally arose as a possible converse to Dirichlet's theorem on Diophantine approximation.  There are now several equivalent formulations of the Lonely Runner Problem.  In the most popular version, due to Bienia, Goddyn, Gvozdjak, Seb\H{o}, and Tarsi \cite{BGGST}, we have $n$ runners standing at the start line of a unit-length circular track.  The runners simultaneously begin running around the track, each at a constant nonzero integer speed.  The \emph{maximum loneliness} of a set of speeds is defined to be the largest $L$ such that there is a time when all of the runners are a distance at least $L$ from the start line.  The Lonely Runner Conjecture asserts that the maximum loneliness is always at least $1/(n+1)$, regardless of the choice of the $n$ speeds.  (Equality holds for the speeds $1,2,\ldots, n$, for instance.)  The combined work of many authors over the last several decades has established the Lonely Runner Conjecture for $n \leq 6$ (see \cite{BW, CP, BGGST, bohman, Renault, BS}), but it remains open for $n \geq 7$.

In this paper, we will work with the following \emph{view-obstruction} formulation of the Lonely Runner Problem, as introduced by Cusick \cite{cusick}.  For a subtorus $T \subseteq (\mathbb{R}/\mathbb{Z})^n$, let $D(T)$ denote the $L^\infty$-distance from $T$ to the point $(1/2, \ldots, 1/2)$.  For $1 \leq k \leq n$, let $\mathcal{S}_k(n)$ denote the set of all of the values of $D(T)$ as $T$ ranges over the $k$-dimensional proper subtori of $(\mathbb{R}/\mathbb{Z})^n$; here we say that a subtorus is \emph{proper} if it is not contained in the union of the coordinate hyperplanes of $(\mathbb{R}/\mathbb{Z})^n$.  The set $\mathcal{S}_1(n)$ is called the \emph{$n$-th Lonely Runner spectrum}. Note that each $\mathcal{S}_k(n)$ is a subset of $[0,1/2)$.

It is easy to check that if $v_1, \ldots, v_n$ are nonzero integers, then the maximum loneliness of this set of speeds equals $1/2-D(\langle (v_1,\ldots, v_n) \rangle_{\mathbb{R}})$.  The condition on the $v_i$'s being all nonzero is equivalent to the condition on the torus $\langle (v_1,\ldots, v_n) \rangle_{\mathbb{R}}$ being proper.  Hence, in the language of view-obstruction, the Lonely Runner Conjecture asserts that $$\max \mathcal{S}_1(n)=1/2-1/(n+1).$$  
The equality case mentioned at the end of the first paragraph gives $D(\langle (1,2,\ldots, n) \rangle_\mathbb{R})=1/2-1/(n+1) \in \mathcal{S}_1(n)$, and the difficult part of the Lonely Runner Conjecture is showing that for every $1$-dimensional proper subtorus $T \subseteq (\mathbb{R}/\mathbb{Z})^n$, we have $D(T) \leq 1/2-1/(n+1)$.

Until recently, work on the Lonely Runner Conjecture focused narrowly on the maximum value of each Lonely Runner spectrum $\mathcal{S}_1(n)$.  In 2020, the second author \cite{thesis} proposed studying the entire set $\mathcal{S}_1(n)$: For instance, one can try to characterize not only its maximum value but all of its near-maximal values, and one can study other properties of $\mathcal{S}_1(n)$, such as its accumulation points.  With regard to the first question, the paper \cite{thesis} conjectured that
\begin{equation}\label{eq:old-conj}
\mathcal{S}_1(n) \cap \left(\frac{1}{2}-\frac{1}{n},\frac{1}{2}\right]=\left\{\frac{1}{2}-\frac{1}{n}+\frac{1}{n^2 s+n}: s \in \mathbb{N}\right\}
\end{equation}
(where for us $\mathbb{N}=\{1,2,3,\ldots\}$).  The $D$-values on the right-hand side are achieved by the subtori $\langle (1,2,\ldots, n-1, ns) \rangle_\mathbb{R}$.  The appeal of this conjecture is that it provides a ``good'' explanation for the appearance of the quantity $1/2-1/(n+1)$ in the Lonely Runner Conjecture: This quantity is the endpoint (corresponding to $s=1$) of a highly arithmetically structured sequence of $D$-values.  One of the main results of \cite{thesis} was a proof of this conjecture for $n=2,3$.  Fan and Sun \cite{computer} later discovered an infinite family of counterexamples for $n=4$, namely, 
\begin{equation}\label{eq:alec-alex}
D(\langle (8, 4s+3, 4s+11, 4s+19) \rangle_\mathbb{R})=\frac{1}{2}-\frac{1}{4}+\frac{1}{16s+60}.
\end{equation}
These elements of $\mathcal{S}_1(4)$ are different from the values in \eqref{eq:old-conj} because $60 \not\equiv 4 \pmod{16}$.  Nonetheless, this sequence of large elements of $\mathcal{S}_1(4)$ is highly arithmetically structured, just like the sequence in \eqref{eq:old-conj}.  The main result of the present paper will show that this is no coincidence.

\subsection{Relative Lonely Runner spectra}
We now discuss the qualitative structure of Lonely Runner spectra and the role of higher-dimensional subtori.  For a set $S$ of real numbers, let $\acc(S)$ denote the set of accumulation points of $S$.  One can distinguish between accumulation points from above and from below, denoted $\acc^+(S), \acc^-(S)$ (respectively)  In \cite{thesis}, the second author showed that
\begin{equation}\label{eq:hierarchy}
\mathcal{S}_1(n-1) \subseteq \acc^+(\mathcal{S}_1(n))
\end{equation}
and then asked whether equality holds and whether $\acc^-(\mathcal{S}_1(n))=\emptyset$.  Giri and the second author \cite{vikram} recently made some progress towards understanding these questions.  One of their main results is that indeed $\acc^-(\mathcal{S}_1(n))=\emptyset$.  They also showed that
\begin{equation}\label{eq:GK-containment}
\acc(\mathcal{S}_1(n)) \subseteq \mathcal{S}_2(n),
\end{equation}
and that equality holds if one interprets $\mathcal{S}_1(n)$ suitably as a multiset.  They also conjecture that
$$\mathcal{S}_2(n)=\mathcal{S}_1(n-1),$$
which would imply that equality holds in \eqref{eq:hierarchy}.  Easy calculations verify this conjecture for $n=2,3$; see the discussion in \cite{vikram}.

The thrust of the argument for \eqref{eq:GK-containment} is that if $T_1, T_2, \ldots$ is a sequence of $1$-dimensional proper subtori with $D(T_i) \to d$, then, once we pass to a subsequence of the $T_i$'s, there is proper subtorus $U$ of dimension at least $2$ such that $D(U)=d$, the $T_i$'s are all contained in $U$, and $T_i$'s become $o(1)$-dense in $U$ as $i \to \infty$.  Since $D(T_i) \geq D(U)$, we see that $D(T_i)$ approaches $d$ from above, and it follows that $\acc^-(\mathcal{S}_1(n))=\emptyset$.  Only $D$-values of $2$-dimensional subtori appear on the right-hand side of \eqref{eq:GK-containment} because of the trivial containments
$$\mathcal{S}_n(n) \subseteq \mathcal{S}_{n-1}(n) \subseteq \cdots \subseteq \mathcal{S}_2(n).$$
The paper \cite{vikram} also contains some further results involving ``subgroup Lonely Runner spectra'' in which $k$-dimensional subtori are replaced by $k$-dimensional subgroups.

To make full use of the results of \cite{vikram}, it is useful to introduce the notion of relative Lonely Runner spectra.  For a subtorus $U \subseteq (\mathbb{R}/\mathbb{Z})^n$ and a natural number $k \leq \dim(U)$, define $\mathcal{S}_k(U)$, the \emph{order-$k$ Lonely Runner spectrum relative to $U$}, to be the set of all values of $D(T)$ as $T$ ranges over the $k$-dimensional proper subtori of $U$.  The special case $\mathcal{S}_1((\mathbb{R}/\mathbb{Z})^n)=\mathcal{S}_1(n)$ corresponds to the Lonely Runner spectra defined above.

The natural output of the aforementioned argument in \cite{vikram} is that for each natural number $n$ and real number $d \in \acc(\mathcal{S}_1(n))$, there are $\varepsilon>0$ and a finite list of proper subtori $U_1, \ldots, U_t \subseteq (\mathbb{R}/\mathbb{Z})^n$, with each $U_j$ satisfying $\dim(U_j) \geq 2$ and $D(U_j)=d$, such that
\begin{equation}\label{eq:acc-union-of-relative-spectra}
\mathcal{S}_1(n) \cap (d,d+\varepsilon)=\left(\bigcup_{j=1}^t \mathcal{S}_1(U_j) \right) \cap (d,d+\varepsilon).
\end{equation}
In other words, the accumulation of $\mathcal{S}_1(n)$ at the point $d$ is completely described by the (finitely many) relative spectra $\mathcal{S}_1(U_j)$, and so studying accumulation points essentially amounts to studying relative spectra.  (The equation \eqref{eq:acc-union-of-relative-spectra} also holds trivially for $d \notin \acc(\mathcal{S}_1(n))$ since taking $\varepsilon$ sufficiently small makes the left-hand side empty.)

\subsection{Main result}
Our main result is a structural characterization of order-$1$ relative spectra for $2$-dimensional subtori, which can be understood as a ``quantitative'' complement to the ``qualitative'' results described in the previous section.  We show that these relative spectra are highly arithmetically structured, in a sense that captures the infinite families described in \eqref{eq:old-conj} and in \eqref{eq:alec-alex}.  In other words, we show that convergence to accumulation points in Lonely Runner spectra is subject to surprisingly rigid constraints.  Let $$\Prog(\alpha, \beta):=\{\alpha s+\beta: s \in \mathbb{Z}_{\geq 0}\}$$
denote the infinite one-sided arithmetic progression with common difference $\alpha$ and shift $\beta$.

\begin{theorem}\label{thm:relative-spectrum-improved}
Let $n \geq 2$ be a natural number, and let $U \subseteq (\mathbb{R}/\mathbb{Z})^n$ be a $2$-dimensional proper subtorus.  Then there are $N \in \mathbb{Z}_{\geq 0}$ and positive rationals $\alpha_1, \ldots, \alpha_N, \beta_1, \ldots, \beta_N$ such that $\mathcal{S}_1(U)$ has finite symmetric difference with the set
$$\bigcup_{i=1}^N (D(U)+1/\Prog(\alpha_i, \beta_i)).$$
\end{theorem}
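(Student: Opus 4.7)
The plan is to parameterize the $1$-dimensional subtori of $U$ arithmetically, compute each $D(T)$ via a small local linear program, and then read off the ``reciprocal arithmetic progression'' structure from the LP formulas, which depend on only finitely many discrete ``regimes.''

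\textbf{Setup and the LP for $D(U)$.} I would first fix an isomorphism $U \cong (\mathbb{R}/\mathbb{Z})^2$ under which the inclusion $U \hookrightarrow (\mathbb{R}/\mathbb{Z})^n$ takes the form $\iota(x,y) = (p_1 x + q_1 y, \ldots, p_n x + q_n y)$ for some integer pairs $(p_i, q_i)$. Then the $1$-dimensional proper subtori of $U$ are indexed (up to sign) by primitive pairs $(a, b) \in \mathbb{Z}^2$ satisfying $v_i := p_i a + q_i b \neq 0$ for all $i$, with $T_{(a,b)} = \{(ta, tb) \bmod \mathbb{Z}^2 : t \in \mathbb{R}\}$. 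The distance function $F(x, y) := \max_i \|p_i x + q_i y - 1/2\|_{\mathbb{R}/\mathbb{Z}}$ on $U$ is piecewise linear, and $D(U) = \min_{U} F$ is attained on a finite set $P^*$ of rational points with a bounded common denominator $Q$. At each $(x^*, y^*) \in P^*$, I record the ``active set'' $I \subseteq \{1, \ldots, n\}$ of indices where $\|p_i x^* + q_i y^* - 1/2\| = D(U)$, together with the signs $\sigma_i \in \{\pm 1\}$ indicating which side of $1/2$ each tight coordinate lies on.

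\textbf{Local LP near an optimum.} For primitive $(a,b)$ with $|a|+|b|$ large, the curve $T_{(a,b)}$ is $O(1/(|a|+|b|))$-dense in $U$ and passes within $L^\infty$-offset $(u^*, v^*)$ of size proportional to $\|bx^* - ay^*\|_{\mathbb{R}/\mathbb{Z}}/(|a|+|b|)$ of each optimum $(x^*, y^*) \in P^*$; crucially, $bx^* - ay^* \bmod 1$ depends only on the residue class of $(a,b)$ modulo $Q$. Writing $t = t^* + \delta$ near the closest approach and expanding $F$ to first order (valid while the signs $\sigma_i$ of the tight coordinates remain frozen), I obtain
\begin{equation*}
D(T_{(a,b)}) - D(U) = \min_{\delta}\, \max_{i \in I}\, \sigma_i\bigl(w_i + v_i \delta\bigr),
\end{equation*}
where $w_i := p_i u^* + q_i v^*$. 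This is a one-dimensional LP whose optimal value admits the closed form $\kappa / (\lambda_1 a + \lambda_2 b)$, in which the active pair $(i_1, i_2) \in I \times I$ and the rational constants $\kappa, \lambda_1, \lambda_2$ are determined by finitely many discrete data: the choice of $(x^*, y^*) \in P^*$, the residue of $(a,b)$ modulo $Q$, and sign conditions on $(a,b)$.

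\textbf{Arithmetic structure and conclusion.} Because there are only finitely many regimes, only finitely many formulas $\kappa/(\lambda_1 a + \lambda_2 b)$ arise. For each regime, as $(a,b)$ ranges over primitive pairs with the prescribed residue class and sign conditions, the value $\lambda_1 a + \lambda_2 b$ traces out a one-sided arithmetic progression (with at most finitely many missing values forced by $\gcd(a,b) = 1$). Inverting gives $1/(D(T_{(a,b)}) - D(U)) \in \Prog(\alpha, \beta)$ for positive rationals $\alpha, \beta$ depending on the regime, and the union over the finitely many regimes yields the claimed decomposition. The finite symmetric difference absorbs the point $D(U)$ itself (contributed by those $T_{(a,b)}$'s passing exactly through some optimum, i.e., $bx^* - ay^* \in \mathbb{Z}$) together with the finite set of small $(a,b)$ for which the local sign structure fails. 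The main obstacle is to verify that the local analysis captures the \emph{global} minimum of $F|_{T_{(a,b)}}$: one must rule out the possibility that some $t$ far from every closest approach to $P^*$ yields a smaller value, which follows from the compactness of $U$ and the fact that $F$ is bounded below by $D(U)$ plus a positive constant outside shrinking neighborhoods of $P^*$. A secondary subtlety is the treatment of degenerate optima (with $|I| \geq 4$ or a positive-dimensional minimum set), which only enlarges the finite list of regimes without affecting the progression structure within each.
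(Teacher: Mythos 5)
Your proposal takes a genuinely different route from the paper's. The paper uses the reduction $D(T)=\min_{i,j,\epsilon}D(T_{i,j,\epsilon})$, where $T_{i,j,\epsilon}=T\cap\{x_i=\epsilon x_j\}$; each such intersection is a finite cyclic subgroup of a $1$-dimensional subgroup $U_{i,j,\epsilon}\cong(\mathbb{R}/\mathbb{Z})\times(\mathbb{Z}/K\mathbb{Z})$, and the computation of $D(T_{i,j,\epsilon})$ reduces to approximating minima of a piecewise-linear function on a circle by a coset of $\langle 1/q\rangle$. Your proposal instead works directly with the piecewise-linear function $F$ on $U\cong(\mathbb{R}/\mathbb{Z})^2$ and runs a local linear program near each optimum in $Z(U)$. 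The underlying idea is the same (localize near $Z(U)$, read off reciprocal-linear formulas on finitely many regimes), but your decomposition bypasses the slice reduction entirely. This is arguably cleaner conceptually; the paper's slice decomposition is more systematic for the explicit calculations it performs later. That said, two of your claims are not complete and genuinely need additional work.

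First, the assertion that restricting to coprime $(a,b)$ loses ``at most finitely many'' values of $\lambda_1 a+\lambda_2 b$ on each regime is stated but not justified, and it is not automatic. The level set $\{\lambda_1 a+\lambda_2 b = s'\}$ intersected with a fixed residue class and sector is an interval of lattice points, and one must show this interval contains primitive pairs for all large $s'$. The paper proves this with a M\"{o}bius-function count (Lemma~\ref{lem:coprime}), obtaining a lower bound $\gg N/\log\log N$ for primitive pairs on a length-$N$ segment, and it also must explicitly exclude residue classes in which coprimality is impossible (where $\gcd(a_1,a_2,b_1,b_2)>1$). Without something of this sort, ``a one-sided arithmetic progression with at most finitely many missing values'' is a nontrivial claim that your argument does not establish.

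Second, the positive-dimensional case for $Z(U)$ is not a ``secondary subtlety that only enlarges the finite list of regimes''; it is structurally different and your local LP actually breaks down there. At an interior point of a line segment contained in $Z(U)$, the active constraint directions $\sigma_i(p_i,q_i)$ for $i\in I$ are all parallel (perpendicular to the segment direction), so the $v_i$ appearing in $\min_\delta\max_{i\in I}\sigma_i(w_i+v_i\delta)$ are all proportional and the local LP is unbounded below. What actually happens in this case, as the paper works out in Section~\ref{sec:sectors-and-lines}, is that for all but finitely many families of $(a,b)$ the torus $T_{(a,b)}$ passes through the segment, giving $D(T)=D(U)$; the progressions come entirely from the remaining ``exceptional half-lines'' of $(a,b)$ along which the number of strands of $T$ near the relevant $1$-dimensional subgroup stays bounded. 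That analysis is qualitatively different from a sector decomposition and needs to be carried out separately, as the paper does (and this case does arise, e.g., for $U^1$ in Section~\ref{sec:s1(4)-first-example}).
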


We make three remarks about the statement of this theorem.  First, the finite symmetric difference arises because a few $1$-dimensional subtori of $U$ can be too ``small'' to see the structure leading to the sets $D(U)+1/\Prog(\alpha_i, \beta_i)$; below we give an explicit example where this finite symmetric difference is necessary.  Second, our argument actually shows the ostensibly slightly stronger statement that there are only finitely many $1$-dimensional proper subtori $T$ of $U$ with $D(T)$ lying outside of the set $\{D(U)\} \cup [\cup_{i=1}^N (D(U)+1/\Prog(\alpha_i, \beta_i))]$.  Third, the set $D(U)+1/\Prog(\alpha_i, \beta_i)$ can be understood as a simple transformation of the set $\mathcal{S}_1(2)=1/\Prog(4,6)$.

Notice that all of the subtori $U_j$ in \eqref{eq:acc-union-of-relative-spectra} are necessarily $2$-dimensional for an accumulation point $d\notin \mathcal{S}_3(n)$; this is the case, for instance, for $d=\max \acc(\mathcal{S}_1(n))=\max \mathcal{S}_1(n-1)$ (see \cite{vikram}).  Here, Theorem~\ref{thm:relative-spectrum-improved} immediately gives the following pleasant characterization.

\begin{corollary}\label{cor:explicit-accumulation}
Let $n \geq 2$ be a natural number, and let $d \in \acc(\mathcal{S}_1(n)) \setminus \mathcal{S}_3(n)$ be a real number.  Then there are $\varepsilon>0$, $N \in \mathbb{Z}_{\geq 0}$, and positive rationals $\alpha_1, \ldots, \alpha_N, \beta_1, \ldots, \beta_N$ such that $\mathcal{S}_1(n) \cap (d,d+\varepsilon)$ has finite symmetric difference with the set 
$$\left(\bigcup_{i=1}^N (d+1/\Prog(\alpha_i, \beta_i))\right)\cap (d,d+\varepsilon).$$
\end{corollary}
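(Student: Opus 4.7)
The plan is to deduce the corollary directly by combining \eqref{eq:acc-union-of-relative-spectra} with \Cref{thm:relative-spectrum-improved}, after ruling out the possibility that any of the subtori appearing in \eqref{eq:acc-union-of-relative-spectra} are of dimension $\geq 3$.

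First, I would invoke \eqref{eq:acc-union-of-relative-spectra} for the given accumulation point $d$: this produces some $\varepsilon>0$ and subtori $U_1,\dots,U_t$ with $\dim(U_j)\geq 2$ and $D(U_j)=d$ such that
$$\mathcal{S}_1(n)\cap(d,d+\varepsilon)=\left(\bigcup_{j=1}^t\mathcal{S}_1(U_j)\right)\cap(d,d+\varepsilon).$$
Next, I would use the hypothesis $d\notin\mathcal{S}_3(n)$ together with the chain of trivial containments $\mathcal{S}_n(n)\subseteq\mathcal{S}_{n-1}(n)\subseteq\cdots\subseteq\mathcal{S}_2(n)$ to rule out any $U_j$ of dimension $\geq 3$: if $\dim(U_j)=k\geq 3$ then $d=D(U_j)\in\mathcal{S}_k(n)\subseteq\mathcal{S}_3(n)$, contradicting the hypothesis. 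So every $U_j$ is a $2$-dimensional proper subtorus.

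Now I can apply \Cref{thm:relative-spectrum-improved} to each $U_j$ separately: for each $j$, it provides $N_j\in\mathbb{Z}_{\geq 0}$ and positive rationals $\alpha_{j,1},\dots,\alpha_{j,N_j},\beta_{j,1},\dots,\beta_{j,N_j}$ such that $\mathcal{S}_1(U_j)$ has finite symmetric difference with $\bigcup_{i=1}^{N_j}(D(U_j)+1/\Prog(\alpha_{j,i},\beta_{j,i}))$. Since $D(U_j)=d$ for all $j$, taking the union over $j\in\{1,\dots,t\}$ and relabeling the pairs $(\alpha_{j,i},\beta_{j,i})$ as $(\alpha_k,\beta_k)$ for $k=1,\dots,N$ (where $N=\sum_j N_j$), we see that $\bigcup_{j=1}^t\mathcal{S}_1(U_j)$ has finite symmetric difference with $\bigcup_{k=1}^N(d+1/\Prog(\alpha_k,\beta_k))$. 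Since intersecting both sides with the interval $(d,d+\varepsilon)$ preserves finite symmetric difference (and using the displayed identity above), the desired conclusion follows.

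There is no serious obstacle here; the corollary is essentially a formal combination of \eqref{eq:acc-union-of-relative-spectra}, the dimension bookkeeping via $\mathcal{S}_k(n)\subseteq\mathcal{S}_3(n)$ for $k\geq 3$, and the main theorem. The only point that requires any thought is the reduction to dimension exactly $2$, which is where the hypothesis $d\notin\mathcal{S}_3(n)$ gets used.
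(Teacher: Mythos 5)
Your proof is correct and matches the paper's intent exactly: the paper itself notes just before the corollary that the hypothesis $d\notin\mathcal{S}_3(n)$ forces all the $U_j$ in \eqref{eq:acc-union-of-relative-spectra} to be $2$-dimensional (precisely your containment-chain argument), after which Theorem~\ref{thm:relative-spectrum-improved} applies to each $U_j$ and the finite union of progressions follows by relabeling. Nothing to add.
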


\subsection{Comparison with previous work}
Section 6 of \cite{thesis} was concerned with the ``one very fast runner'' setting of the Lonely Runner Problem, in which we fix nonzero integers $v_1, \ldots, v_{n-1}$ and consider the maximum loneliness values associated with the speeds $v_1, \ldots, v_{n-1}, v_n$ for $v_n$ large.  This essentially amounts to studying the relative spectrum $$\mathcal{S}_1(\langle (v_1, \ldots, v_{n-1}) \rangle_\mathbb{R} \times (\mathbb{R}/\mathbb{Z})).$$  Our Theorem~\ref{thm:relative-spectrum-improved} can be understood as an extension of this study to general $2$-dimensional subtori.

For other computation-related results on the Lonely Runner Problem, see, e.g., \cite{tao,malikiosis}.

\subsection{Computing relative spectra}
The proof of Theorem~\ref{thm:relative-spectrum-improved} is quite explicit and provides an algorithm (albeit not so simple) for calculating the structure parameters $\alpha_i,\beta_i$ for any given $2$-dimensional subtorus $U$.  As a proof of concept, we carry out this calculation for several particular choices of $U$.

As a first example, we determine $\mathcal{S}_1(4)$ up to the first accumulation point, namely, $1/2-1/4=1/4$.  We show that this portion of the fourth Lonely Runner spectrum consists (up to finitely many exceptions) of only the infinite progressions in \eqref{eq:old-conj} and in \eqref{eq:alec-alex}; along the way, our argument provides a new (and more systematic) proof of the identity \eqref{eq:alec-alex} from \cite{computer}.

\begin{theorem}\label{thm:S_1(4)-first-acc-point}
The set $\mathcal{S}_1(4) \cap (1/4,1/2]$ has finite symmetric difference with the set
$$1/4+1/\Prog(8,12)=1/4+1/4\Prog(2,3).$$
\end{theorem}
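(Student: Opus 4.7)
The plan is to deduce \cref{thm:S_1(4)-first-acc-point} from \cref{thm:relative-spectrum-improved} by carrying out that theorem's classification program at the accumulation point $d = 1/4$. I would begin by verifying that $1/4$ is the largest accumulation point of $\mathcal{S}_1(4)$: we have $1/4 = \max \mathcal{S}_1(3) \in \acc^+(\mathcal{S}_1(4))$ by \eqref{eq:hierarchy}, and combining $\acc(\mathcal{S}_1(4)) \subseteq \mathcal{S}_2(4)$ from \cite{vikram} with the bound $\max \mathcal{S}_2(4) \leq 1/4$ (a $2$-dimensional view-obstruction bound, obtainable from the $n = 3$ case of the Lonely Runner Conjecture applied to $1$-dimensional sub-subtori of proper $2$-dimensional subtori) rules out larger accumulation points. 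Together with $\acc^-(\mathcal{S}_1(4)) = \emptyset$, this makes $\mathcal{S}_1(4) \cap [1/4 + \varepsilon, 1/2]$ finite for each $\varepsilon > 0$, so only the behavior in a right-neighborhood of $1/4$ needs a structural description. Since $1/4 \notin \mathcal{S}_3(4)$ (as follows from a direct enumeration of $3$-dimensional proper subtori of $(\mathbb{R}/\mathbb{Z})^4$, all of which have $D$-values in $\{0\} \cup \{1/(2k) : k \geq 3 \text{ odd}\}$), \cref{cor:explicit-accumulation} reduces the problem to computing $\bigcup_j \mathcal{S}_1(U_j)$ where the $U_j$ range over the $2$-dimensional proper subtori of $(\mathbb{R}/\mathbb{Z})^4$ with $D(U_j) = 1/4$.

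I would then classify those $U$'s up to the signed-permutation symmetries $\{\pm 1\}^4 \rtimes S_4$ of $(\mathbb{R}/\mathbb{Z})^4$, which preserve $D$-values and properness. Parameterizing $U$ via a saturated rank-$2$ sublattice $\Lambda \subseteq \mathbb{Z}^4$, the conditions $D(U) = 1/4$ and properness impose restrictive constraints on $\Lambda \bmod 4$, and after a careful case analysis I expect exactly two orbits, represented by
\[
U_1 = \langle (1,2,3,0),\, (0,0,0,1)\rangle_{\mathbb{R}} \quad \text{and} \quad U_2 = \langle (1,0,1,2),\, (0,1,1,1)\rangle_{\mathbb{R}}.
\]
The first is the natural lift to $(\mathbb{R}/\mathbb{Z})^4$ of the unique maximum-$D$ torus $\langle (1,2,3)\rangle_{\mathbb{R}} \subseteq (\mathbb{R}/\mathbb{Z})^3$, and the second is the $2$-dimensional subtorus containing the Fan--Sun family \eqref{eq:alec-alex}.

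Next I would apply the algorithm from the proof of \cref{thm:relative-spectrum-improved} to each of $U_1, U_2$. A $1$-dimensional proper subtorus of $U_j$ is parameterized by a primitive pair $(m, n) \in \mathbb{Z}^2$, and the asymptotic expansion of $D$ (within each residue class of $(m, n)$ modulo appropriate integers) produces the arithmetic progressions. For $U_1$, the ``one very fast runner'' computation from Section~6 of \cite{thesis} recovers $D(\langle (1,2,3,4s)\rangle_{\mathbb{R}}) = 1/4 + 1/(16s+4)$, yielding $1/4 + 1/\Prog(16, 20)$. For $U_2$, the parallel computation recovers the Fan--Sun identity $D(\langle (8, 4s+3, 4s+11, 4s+19)\rangle_{\mathbb{R}}) = 1/4 + 1/(16s + 60)$ together with analogous formulas for $(m, n)$ in the other residue classes of the parameterization; these combine to yield $1/4 + 1/\Prog(16, 12)$ up to finitely many initial terms. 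Since $\Prog(16, 20) \cup \Prog(16, 12) = \Prog(8, 12)$, the union equals $1/4 + 1/\Prog(8, 12)$ as required, with the finite symmetric difference absorbing low-$s$ exceptions, including $1/4 + 1/12 = 1/3 > 3/10 = \max \mathcal{S}_1(4)$, which is excluded by the Lonely Runner Conjecture for $n = 4$.

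The main obstacle is the classification in the second step: showing that $U_1$ and $U_2$ (together with their symmetry translates) exhaust the $2$-dimensional proper subtori with $D(U) = 1/4$ requires a detailed lattice analysis to rule out other candidate configurations. A secondary technical challenge is verifying that the full relative spectrum $\mathcal{S}_1(U_2)$ matches $1/4 + 1/\Prog(16, 12)$ rather than a sparser subprogression, which amounts to analyzing the $D$-asymptotics for $(m, n)$ in each residue class of the parameterization of $1$-dimensional subtori of $U_2$.
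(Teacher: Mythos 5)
The overall architecture of your proposal matches the paper's: reduce via \cref{cor:explicit-accumulation} to the classification of $2$-dimensional proper subtori $U$ of $(\mathbb{R}/\mathbb{Z})^4$ with $D(U)=1/4$, then compute each relative spectrum $\mathcal{S}_1(U)$. Your two representatives are indeed the paper's $U^1,U^2$ up to signed-permutation symmetry (your $U_2=\langle(1,0,1,2),(0,1,1,1)\rangle_{\mathbb{R}}$ sends $(m,n)=(8,4s+3)$ to $(8,4s+3,4s+11,4s+19)$, so it does contain the Fan--Sun family), and your formula for $U_1$ gives the correct progression $1/4 + 1/\Prog(16,20) = 1/4 + 1/4\Prog(4,5)$, matching Proposition~\ref{prop:U_1-calculation}.

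However, there is a genuine error in your claim about $\mathcal{S}_1(U_2)$. You assert that the residue classes of the parameterization of $1$-dimensional subtori of $U_2$ ``combine to yield $1/4 + 1/\Prog(16,12)$ up to finitely many initial terms,'' and you recover the final answer from the identity $\Prog(16,20)\cup\Prog(16,12)=\Prog(8,12)$. But the paper's Proposition~\ref{prop:U_2-calculation} shows that $\mathcal{S}_1(U^2)$ already has finite symmetric difference with the full set $1/4 + 1/4\Prog(2,3) = 1/4 + 1/\Prog(8,12)$; in particular it contains the complementary half $1/4+1/\Prog(16,20)$ on its own (see Figure~\ref{tab:sectors-best-approx-u2}, where the scaled-progression column produces both $1/(4t+1)$ and $1/(2t+1)$, hence both $1/(16t+4)$ and $1/(8t+4)$). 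Your picture of the two subtori contributing roughly complementary halves of $\Prog(8,12)$ is incorrect: $\mathcal{S}_1(U^2)$ is the whole thing and $\mathcal{S}_1(U^1)\subseteq\mathcal{S}_1(U^2)$ up to finitely many elements. That the union comes out right is a coincidence of the arithmetic, not a consequence of your intermediate claims, so the argument as written does not establish what $\mathcal{S}_1(U_2)$ actually is. Relatedly, the appeal to the ``one very fast runner'' computation of \cite{thesis} for $U_1$ only exhibits the family $\langle(1,2,3,4s)\rangle_{\mathbb{R}}$ inside $\mathcal{S}_1(U_1)$; you would still need to verify (as the paper does via the exceptional-half-line analysis in Section~\ref{sec:s1(4)-first-example}) that all other $1$-dimensional subtori of $U_1$ contribute only the value $1/4$ or fall into the same progression, so that $\mathcal{S}_1(U_1)$ is no larger than claimed.
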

Characterizing this finite symmetric difference is itself a finite calculation that we have not attempted to carry out (but numerical experiments suggest that there are no exceptional elements).

For our second example, we determine $\mathcal{S}_1(3)$ up to the second accumulation point, namely, $1/2-2/5=1/10$.  Recall from \cite{thesis} that $\mathcal{S}_1(3) \cap (1/6,1/2]=1/6+1/\Prog(9,12)=1/6+1/3\Prog(3,4)$.

\begin{theorem}\label{thm:S_1(3)-second-acc-point}
The set $\mathcal{S}_1(3) \cap (1/10,1/6]$ has finite symmetric difference with the set \[(1/10 + 4/5\Prog(5, 7)) \cup (1/10 + 3/5\Prog(5, 9)).\]
\end{theorem}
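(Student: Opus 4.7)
The plan is to apply Corollary~\ref{cor:explicit-accumulation} at $d = 1/10$. First I would verify the hypotheses. A direct minimization of $\|\mathbf{y}\|_\infty$ subject to $\chi_\mathbf{a}(\mathbf{y} + (1/2,1/2,1/2)) \equiv 0 \pmod 1$ shows that a $2$-dimensional subtorus $U = \ker \chi_\mathbf{a}$ (with $\chi_\mathbf{a}$ primitive) has $D(U) = 1/(2\sum|a_i|)$ when $\sum a_i$ is odd and $D(U) = 0$ otherwise, so $\mathcal{S}_2(3) = \{0, 1/6, 1/10, 1/14, \ldots\}$. Combined with $\acc(\mathcal{S}_1(3)) \subseteq \mathcal{S}_2(3)$ from \eqref{eq:GK-containment}, this shows that the only accumulation points of $\mathcal{S}_1(3)$ in $[1/10,1/6]$ are the two endpoints. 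Together with $\acc^-(\mathcal{S}_1(3)) = \emptyset$ from \cite{vikram} and the trivial $\mathcal{S}_3(3) = \{0\}$, it follows that $\mathcal{S}_1(3) \cap [1/10 + \varepsilon, 1/6]$ is finite for every $\varepsilon > 0$; the real content is in analyzing $\mathcal{S}_1(3) \cap (1/10, 1/10 + \varepsilon)$ for small $\varepsilon$, which by Corollary~\ref{cor:explicit-accumulation} is determined (up to finite symmetric difference) by the relative spectra $\mathcal{S}_1(U)$ over the finitely many $2$-dimensional proper subtori $U$ with $D(U) = 1/10$.

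Second, I would enumerate these $U$. They correspond to primitive characters $\chi_\mathbf{a}$ with $\sum|a_i| = 5$, $\sum a_i$ odd, and $\gcd(a_i)=1$. Under the symmetries fixing both $(1/2, 1/2, 1/2)$ and $D$ (namely coordinate permutations and individual coordinate sign flips), every such character lies in one of four orbits represented by $\mathbf{a} = (0, 1, 4), (0, 2, 3), (1, 1, 3), (1, 2, 2)$. Let $U_1, \ldots, U_4$ be the corresponding subtori; each is easily checked to be proper by exhibiting an all-nonzero point.

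Third, I would carry out the algorithm in the proof of Theorem~\ref{thm:relative-spectrum-improved} for each $U_i$ to extract the progressions describing $\mathcal{S}_1(U_i)$. A $1$-dimensional proper subtorus $T \subseteq U_i$ is $\langle \mathbf{v} \rangle_\mathbb{R}$ for a primitive all-nonzero integer vector $\mathbf{v} \in (\mathbf{a}^{(i)})^\perp$. Parameterizing this rank-$2$ lattice by two integers and letting one tend to infinity with the congruence class of the other fixed, the minmax computation of $D(T) = 1/2 - L(\mathbf{v})$ becomes a rational function of one parameter $k$ of the form $1/10 + 1/(\alpha k + \beta)$. A sample calculation for $U_1$ with $\mathbf{v} = (5k+3, -4, 1)$ gives $D(T) = 1/10 + 4/(25k+35)$ for $k \geq 1$, reproducing the full progression $1/10 + 4/(5\Prog(5,7))$; analogously, for $U_2$ with $\mathbf{v} = (5k+6, -3, 2)$ one obtains $D(T) = 1/10 + 3/(25k+45)$, reproducing $1/10 + 3/(5\Prog(5,9))$. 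I would then verify that the remaining $U_3, U_4$ contribute no progressions outside these two (beyond finitely many exceptional elements) by running the same algorithm on them and observing that their outputs coincide with subsets of the two already-found progressions. Combining the four relative spectra, intersecting with $(1/10, 1/6]$, and absorbing the finite set $\mathcal{S}_1(3) \cap [1/10+\varepsilon, 1/6]$ together with any ``small'' parameter values into the symmetric difference then yields the claim.

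The main obstacle is step three: running the minmax algorithm cleanly for each of the four $U_i$, identifying the correct asymptotic regime in which $D(T) \to 1/10$ from above, and extracting the leading correction in a uniform way. A subsidiary challenge is that the two progressions in the statement are not disjoint --- they share the common sub-progression coming from the Diophantine equation $3s - 4t = 3$, corresponding to values like $1/6, 1/8, 3/26, 1/9, \ldots$ --- so one must track carefully which $U_i$ and which parameterization contributes to which slot of which progression, in order to confirm that the union simplifies to exactly the stated pair of progressions with no extra progression appearing and no sub-progression missed.
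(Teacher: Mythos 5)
Your overall strategy coincides with the paper's: by \eqref{eq:acc-union-of-relative-spectra} together with the finiteness of $\mathcal{S}_1(3)\cap[1/10+\varepsilon,1/6]$, it suffices to enumerate the $2$-dimensional proper subtori $U\subseteq(\mathbb{R}/\mathbb{Z})^3$ with $D(U)=1/10$ and then compute each $\mathcal{S}_1(U)$ via the algorithm behind Theorem~\ref{thm:relative-spectrum-improved}. Your enumeration through the codimension-one character formula $D(\ker\chi_{\mathbf a})=1/(2\sum|a_i|)$ (for $\sum a_i$ odd) is a cleaner route than the paper's canonical-generator case analysis in Section~\ref{sec:u-to-check-1/10}, and it yields exactly the same four orbits: your normals $(0,1,4),(0,2,3),(1,1,3),(1,2,2)$ correspond to the paper's $U^3,U^4,U^5,U^6$ respectively. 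So on the enumeration side you have the right list, arrived at a bit more elegantly.

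The sample calculation for $U_1$ is incorrect, and the error is symptomatic of the bookkeeping hazard you yourself flag. For $\mathbf v=(5k+3,-4,1)\in(0,1,4)^\perp$, the true formula is $D(\langle\mathbf v\rangle_\mathbb{R})=1/10+3/(25k+20)$, not $1/10+4/(25k+35)$; the two happen to agree at $k=1$ (both give $1/6$) but already diverge at $k=2$, where the correct value is $D(\langle(13,-4,1)\rangle_\mathbb{R})=1/7$ (e.g., at time $t=5/14$ the speeds $13,4,1$ are at positions $9/14,6/14,5/14$, all at distance $\ge 5/14$, so $\ML\ge 5/14$), strictly below your claimed $5/34$. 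The expression $4/(25k+35)$ is the contribution of a single facet constraint $\{x_i=\pm x_j\}$; it is not the minimum over all facets, which is what actually computes $D$. Consequently this half-line contributes (the tail of) the \emph{second} progression $1/10+3/5\Prog(5,9)$, not the first; the first progression $1/10+4/5\Prog(5,7)$ is instead produced in full by the half-line $\mathbf v=(5k,-3,2)$ inside $U_2$, and only partially (even-indexed terms) by any single half-line of $U_1$. In other words, no single $U_i$, and certainly no single half-line, hands you both target progressions; the stated answer only emerges after combining sub-progressions from all four subtori and all their sectors/half-lines, exactly as the paper's Figures~\ref{tab:prog-sectors-100-014}--\ref{tab:prog-sectors-110-012} carry out. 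Your proposal has the right skeleton but needs this minimization step performed faithfully.
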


Our third example, of a slightly different flavor, is the discovery of a new infinite family in $\mathcal{S}_1(6)$ limiting to the first accumulation point, namely, $1/2-1/6=1/3$.
\begin{theorem}\label{thm:S_1(6)-prog}
The set $\mathcal{S}_1(6) \cap (1/3, 1/2]$ contains the set \[1/3 + 1/6\Prog(6, 11).\]
\end{theorem}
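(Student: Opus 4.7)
The plan is to exhibit, for each $s \in \mathbb{Z}_{\geq 0}$, an explicit $1$-dimensional proper subtorus $T_s \subseteq (\mathbb{R}/\mathbb{Z})^6$ with $D(T_s) = 1/3 + 1/(36s + 66)$.  Since the target values accumulate at $1/3$ from above and the maximum of $\mathcal{S}_1(5)$ is $1/3$ (by the proved Lonely Runner Conjecture for $n = 5$, attained up to coordinate permutation and sign only by speeds $(1, 2, 3, 4, 5)$), the subtori $T_s$ naturally live inside the $2$-dimensional ambient subtorus $U = \langle (1, 2, 3, 4, 5, 0), (0, 0, 0, 0, 0, 1)\rangle_\mathbb{R}$, whose $D$-value is $1/3$.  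The $1$-dimensional subtori of $U$ are parameterized by primitive integer pairs $(p, q) \in \mathbb{Z}^2$ via $T_{p, q} = \langle (p, 2p, 3p, 4p, 5p, q)\rangle_\mathbb{R}$, and the task reduces to identifying the correct one-parameter family $(p(s), q(s))$ realizing the target $D$-values.

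For the upper bound $D(T_s) \leq 1/3 + 1/(36s + 66)$, I would exhibit an explicit rational optimal time $t_s^* \in \mathbb{R}/\mathbb{Z}$ (a rational number with denominator proportional to $36s + 66$), pinned down by the tightness conditions that at least two of the six speeds of $T_s$ attain distance exactly $1/3 + 1/(36s + 66)$ from $1/2$ at time $t_s^*$.  Once $t_s^*$ is written down, verifying the upper bound is a direct rational computation, checking that the remaining speeds have strictly smaller distances; guided by the Fan--Sun analog in $n = 4$, one expects $t_s^*$ to correspond to a critical time of a perturbation of the $n=5$ or $n=6$ extremal Lonely Runner problems for speeds in $(1, 2, 3, 4, 5)$ or $(1, 2, 3, 4, 5, 6)$.

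For the lower bound $D(T_s) \geq 1/3 + 1/(36s + 66)$, which is the main obstacle, the plan is to invoke the algorithmic description of $\mathcal{S}_1(U)$ provided by the proof of Theorem~\ref{thm:relative-spectrum-improved}, which expresses $\mathcal{S}_1(U)$ (up to finite symmetric difference) as $\bigcup_i (1/3 + 1/\Prog(\alpha_i, \beta_i))$ for explicitly computable progressions $(\alpha_i, \beta_i)$.  The hard step is to verify that the specific progression $\Prog(36, 66)$ appears among the $(\alpha_i, \beta_i)$'s output by the algorithm for this particular $U$, which requires carefully running through the algorithm's case analysis: a naive perturbation analysis at the $5$-runner extremal time $t = 1/6$ produces progressions like $\Prog(18, 6)$ (related to the ``old conjecture'' family $(1,2,3,4,5,6r)$) but not $\Prog(36, 66)$, so the target progression must emerge from a subtler critical time, and isolating it is the delicate bookkeeping at the heart of the argument.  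The finite symmetric difference from Theorem~\ref{thm:relative-spectrum-improved} is then cleaned up by direct verification for the first few small values of $s$.
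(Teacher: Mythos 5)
Your proposal has a fundamental gap: it works with the wrong ambient $2$-dimensional subtorus $U$, and this is not a repairable bookkeeping issue. You parameterize the target $1$-dimensional subtori as living inside the product torus $U = \langle (1,2,3,4,5,0),(0,0,0,0,0,1)\rangle_\mathbb{R}$, i.e., the ``one very fast runner'' torus $T' \times (\mathbb{R}/\mathbb{Z})$ with $T' = \langle (1,2,3,4,5)\rangle_\mathbb{R}$. But the relative spectrum of such a product torus (by the same exceptional-half-line analysis as in the proof of Proposition~\ref{prop:U_1-calculation} for $U^1 \subseteq (\mathbb{R}/\mathbb{Z})^4$) yields, up to finite symmetric difference, only the ``old conjecture'' family $1/3 + 1/6\Prog(6,7)$, coming from the speeds $(1,2,3,4,5,6r)$. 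Since $7 \not\equiv 11 \pmod 6$, the progressions $\Prog(6,7)$ and $\Prog(6,11)$ are actually \emph{disjoint}, so $1/3 + 1/6\Prog(6,11)$ cannot emerge from any amount of finer analysis of critical times within your chosen $U$. Your own observation that ``a naive perturbation analysis at the $5$-runner extremal time $t = 1/6$ \dots\ does not produce $\Prog(36,66)$'' is the symptom of this; the cure is not a subtler critical time but a different $U$.

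The paper instead identifies a different $2$-dimensional proper subtorus with $D$-value $1/3$, namely $U^7 = \langle (1,0,1,2,3,3),(0,1,1,1,1,2)\rangle_\mathbb{R}$, which is not a product. It then parameterizes $T = \langle A(1,0,1,2,3,3) + B(0,1,1,1,1,2)\rangle_\mathbb{R}$, restricts to the half-line $A=6$, $B \equiv 5 \pmod 6$ (on which $A,B$ are automatically coprime), and computes $D(T) = 1/3 + 1/\bigl(6(2A+B)\bigr)$ there, which with $B = 6s+5$ gives $1/3 + 1/\bigl(6(6s+17)\bigr)$; the missing initial term is supplied by $(A,B)=(5,1)$. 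This is exactly analogous to how the Fan--Sun family in $\mathcal{S}_1(4)$ (Equation~\eqref{eq:alec-alex}) comes from $U^2 = \langle (1,0,1,1),(1,1,0,2)\rangle_\mathbb{R}$ rather than the product $U^1$. So to salvage your approach, the key missing step is to locate the right ambient torus $U^7$: the accumulation argument of \cite{vikram} only guarantees \emph{some} $2$-dimensional $U$ with $D(U)=1/3$ contains all but finitely many of the $T_s$, and there is no reason to expect it to be a product; indeed, for the progression $\Prog(6,11)$, it is not.
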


One could in principle calculate $\mathcal{S}_1(6) \cap (1/3,1/2]$ up to finite symmetric difference, following the model of the proof of Theorem~\ref{thm:S_1(4)-first-acc-point}, but this would be fairly lengthy and we have not attempted it.

In general, before using our methods to determine $\mathcal{S}_1(n)$ up to the first accumulation point, one must first determine all tight instances for $\mathcal{S}_1(n-1)$, i.e., proper $1$-dimensional subtori $T \subseteq (\mathbb{R}/\mathbb{Z})^{n-1}$ with $D(T)=\max \mathcal{S}_1(n-1)$; see Section~\ref{sec:identifying-candidates-u} below.  Determining all tight instances for $\mathcal{S}_1(n)$ is a (large) finite calculation for any particular $n$, thanks to \cite{vikram}, but at the present it is available in the literature only for $n \in \{1,2,3,5\}$.  This is one reason why studying accumulation points of $\mathcal{S}_1(6)$ is currently more convenient than studying accumulation points of $\mathcal{S}_1(5)$.  (Numerical simulations suggest that \eqref{eq:old-conj} holds for $n=5$.)

In a related direction, we give a necessary and sufficient geometric condition (Proposition~\ref{prop:finiteness-criterion}) for the relative spectrum of a $2$-dimensional subtorus being finite, i.e., having no progressions.

\subsection{Future directions}
There are several appealing directions for future work.  The most obvious is computing more examples of relative spectra for low-volume $2$-dimensional subtori.  Such calculations could lead to new families of counterexamples to \eqref{eq:old-conj}; perhaps one could could even discover such families for infinitely many values of $n$ or make a ``reasonable'' conjecture constraining $\mathcal{S}_1(n)$ up to the first accumulation point.  It also seems that a thorough understanding of $\mathcal{S}_1(3)$ is within reach; this set is well-ordered with order type $\omega^2+1$, and, using relative spectra, one could hope to characterize it up to an exceptional set of order type at most $\omega$.

Our arguments for Theorem~\ref{thm:relative-spectrum-improved} would likely lead to an analogous structure theorem for $\mathcal{S}_{m-1}(U)$ whenever $U$ is an $m$-dimensional proper subtorus, but the details would be messier and we have not pursued this.

Leaving the codimension-$1$ regime, a more ambitious open problem is obtaining a structure theorem for order-$1$ relative spectra of $3$-dimensional subtori.  One new difficulty is that the $1$-dimensional subgroups of $(\mathbb{R}/\mathbb{Z})^3$ are much more complicated than the the $1$-dimensional subgroups of $(\mathbb{R}/\mathbb{Z})^2$ (see Section~\ref{sec:finite-cyclic}). It is conceivable that for a $3$-dimensional proper subtorus $U$, the relative spectrum $\mathcal{S}_{1}(U)$ might ``look like'' a finite union of sets obtained in a simple way from $\mathcal{S}_1(3)$; presumably an improved understanding of $\mathcal{S}_1(3)$ would be essential here.

\subsection{Organization of the paper}
We prove Theorem~\ref{thm:relative-spectrum-improved} in Section~\ref{sec:main-proof}.  In Section~\ref{sec:identifying-candidates-u} we describe some techniques for determining all of the $2$-dimensional subtori with a given $D$-value and carry out this determination for the cases that will figure in our explicit examples.  We then prove Theorems~\ref{thm:S_1(4)-first-acc-point}, \ref{thm:S_1(3)-second-acc-point}, and~\ref{thm:S_1(6)-prog} in Sections~\ref{sec:explicit-comp-s1(4)}, \ref{sec:explicit-comp-s_1(3)}, and~\ref{sec:S_1(6)-prog}.  In Section~\ref{sec:finite} we establish a criterion for the finiteness of a relative spectrum and give an example of a finite, non-empty relative spectrum.

\section{Proof of Theorem~\ref{thm:relative-spectrum-improved}}\label{sec:main-proof}

\subsection{Proof strategy}\label{sec:sketch}
Before diving into the technical details of the proof of Theorem~\ref{thm:relative-spectrum-improved}, we sketch the main points of the argument.  Fix a choice of $u,v \in \mathbb{Z}^2$ such that $U=\langle u,v \rangle_{\mathbb{R}}$.  $1$-dimensional subtori of $U$ are of the form $T=\langle Au+Bv \rangle_\mathbb{R}$ for coprime integers $A,B$ satisfying $A \geq 0$.  The main goal is to express $D(T)$ in terms of $A,B$.  An easy perturbation argument \cite[Proposition 2.1]{thesis} shows that
$$D(T)=\min_{i,j,\epsilon} D(T_{i,j,\epsilon}),$$
where for $1 \leq i<j \leq n$ and $\epsilon \in \{+,-\}$, we have set $$T_{i,j,\epsilon}:=T \cap \{x_i=\epsilon x_j\}.$$
We will analyze each $D(T_{i,j,\epsilon})$ individually.

An important observation is that (besides some trivial cases) $T_{i,j,\epsilon}$ is a finite cyclic subgroup of
$$U_{i,j,\epsilon}:=U \cap \{x_i=\epsilon x_j\},$$
which is embedded in $(\mathbb{R}/\mathbb{Z})^n$ as a copy of $(\mathbb{R}/\mathbb{Z}) \times (\mathbb{Z}/K\mathbb{Z})$ for some natural number $K$.  In Section~\ref{sec:finite-cyclic}, we recall the structure of finite cyclic subgroups of $(\mathbb{R}/\mathbb{Z}) \times (\mathbb{Z}/K\mathbb{Z})$ and then show that the ``structure parameters'' for the cyclic group $T_{i,j,\epsilon}$ depend in a linear fashion on $A,B$ once we account for the residues of $A,B$ with respect to some large modulus $M$ (depending on $U$).

Consider the function $D: (\mathbb{R}/\mathbb{Z})^n \to [0,1/2]$ given by $D(x_1, \ldots, x_n):=\max_k |x_k-1/2|$.  The restriction of $D$ to each connected component $U_{i,j,\epsilon,\ell}$ of $U_{i,j,\epsilon}$ is continuous and piecewise-linear.  The work of Section~\ref{sec:finite-cyclic} essentially reduces the task of computing $D(T_{i,j,\epsilon,\ell})$ to the task of computing
$$\min_{x \in \langle 1/q \rangle} f(x)$$
where $q$ is a natural number and $f: \mathbb{R}/\mathbb{Z} \to \mathbb{R}$ is a continuous and piecewise-linear function assuming its minimum value at some points $\tau_1, \ldots, \tau_H$.  So it suffices to understand how well the $\tau_h$'s can be approximated by multiples of $1/q$.  This work is carried out in Section~\ref{sec:approximating-minima}, and it is here that the expressions $1/\Prog(\alpha,\beta)$ appear.  The output of this section (ignoring some important edge cases) is a sector decomposition of $\mathbb{R}_{\geq 0} \times \mathbb{R}$ such that if we partition the pairs $(A,B)$ according to the residues of $A,B$ modulo some $M$ and the sector containing $(A,B)$, then on each part of the partition we obtain a formula of the form
\begin{equation*}\label{eq:general-form-of-D(T_i,j,eps)}
D(T)=D(U)+1/(EA+FB)
\end{equation*}
for some rationals $E,F$.

At this point, a change of variables gives that on each part of our partition, $D(T)$ takes values in the set $D(U)+1/\Prog(\alpha,\beta)$
for some positive rationals $\alpha,\beta$ depending on $E,F$.  In order to check that the value $D(U)+1/(\alpha s+\beta)$ is in fact attained
for all sufficiently large $s \in \mathbb{N}$, we must verify that there are coprime $A,B$ satisfying $EA+FB=\alpha s+\beta$.  This last step, which boils down to a brief analytic number theory argument that we provide in Section~\ref{sec:coprime}, requires us to put a few modular constraints on $s$, which is harmless.

%It remains to put together the pieces by examining the minimum of these expressions over all $i,j,\varepsilon$.  This is the work of Section~\ref{sec:everything-together}.  This step results in a further refinement of our partition (mentioned in the previous paragraph) to account for which $i,j,\epsilon$ witnesses the minimum.  At this point, we have shown that on each part of our partition, $D(T)=D(\langle Au+Bv \rangle)$ is given by a formula of the form \eqref{eq:general-form-of-D(T_i,j,eps)}.  This yields elements of $\mathcal{S}_1(U)$ that are contained in $d(U)+1/\Prog(\alpha,\beta)$
%for some $\alpha,\beta$ depending on $E,F$.  In order to check that the value $D(U)+1/(\alpha+\beta s)$ is in fact attained
%for all sufficiently large $s \in \mathbb{N}$, we must show that there are coprime $A,B$ satisfying $EA+FB=\alpha+\beta s$.  This last step boils down to a brief analytic number theory argument that we provide in Section~\ref{sec:coprime}.

\subsection{Intersections of subtori and subgroups}\label{sec:finite-cyclic}
The first set of lemmas concerns the possible intersections of subtori and subspaces in a $2$-dimensional torus.  In the following discussion, a \emph{torus} is any topological group that is isomorphic to $(\mathbb{R}/\mathbb{Z})^n$ for some natural number $n$, and by a \emph{$k$-dimensional subgroup} of a torus we mean a closed $k$-dimensional Lie subgroup.  A \emph{subtorus} is a subgroup that is also a torus.

\begin{lemma}\label{lem:1-dim-subgroup}
Let $U$ be a $2$-dimensional torus.  Then every $1$-dimensional subgroup $\Gamma$ of $U$ is of the form $U' \times H$ where $U'$ is a $1$-dimensional subtorus of $U$ and $H$ is a finite cyclic group.  In particular, every such $\Gamma$ is isomorphic (as a Lie group) to $(\mathbb{R}/\mathbb{Z}) \times (\mathbb{Z}/K\mathbb{Z})$ for some natural number $K$.
\end{lemma}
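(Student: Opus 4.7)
The plan is to analyze $\Gamma$ via its identity component, which must be a $1$-dimensional subtorus, and then show that the quotient by this identity component is a finite cyclic group whose class in $\Gamma$ splits.

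First I would let $U'=\Gamma^0$ denote the identity component of $\Gamma$. Since $\Gamma$ is a closed $1$-dimensional Lie subgroup, $\Gamma^0$ is a closed connected $1$-dimensional Lie subgroup of $U$, and the classification of closed connected subgroups of a torus shows that such a subgroup is itself a $1$-dimensional subtorus (concretely, after fixing an identification $U \cong \mathbb{R}^2/\mathbb{Z}^2$, it is of the form $\langle (p,q)\rangle_{\mathbb{R}}$ for some primitive $(p,q)\in\mathbb{Z}^2$). Next, the quotient group $\Gamma/U'$ is discrete (since $U' = \Gamma^0$), and it maps injectively to the quotient $U/U'$ under the map induced by inclusion. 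Since $U$ is a $2$-dimensional torus and $U'$ is a $1$-dimensional subtorus, the quotient $U/U'$ is itself a $1$-dimensional torus, isomorphic to $\mathbb{R}/\mathbb{Z}$. Every finite subgroup of $\mathbb{R}/\mathbb{Z}$ is cyclic (of the form $\frac{1}{K}\mathbb{Z}/\mathbb{Z}$), so $\Gamma/U'$ is cyclic of some finite order $K$.

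The remaining task is to split the short exact sequence
\[
0 \to U' \to \Gamma \to \mathbb{Z}/K\mathbb{Z} \to 0.
\]
Let $\bar{g}$ be a generator of $\Gamma/U'$ and pick any lift $g \in \Gamma$. Then $Kg \in U'$, say $Kg = \sigma$. Since $U' \cong \mathbb{R}/\mathbb{Z}$ is divisible, I can choose $\tau \in U'$ with $K\tau = \sigma$, and then the element $g' := g - \tau$ is still a lift of $\bar{g}$ but satisfies $Kg' = 0$. The subgroup $H := \langle g' \rangle \subseteq \Gamma$ is therefore a cyclic group of order exactly $K$, disjoint from $U'$ (because its image in $\Gamma/U'$ has order $K$), and $\Gamma = U' + H$ (because $\Gamma/U' = \langle \bar{g}\rangle$). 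Hence $\Gamma = U' \oplus H \cong U' \times \mathbb{Z}/K\mathbb{Z}$ as abstract groups, and this isomorphism is automatically a homeomorphism since $H$ is finite and discrete.

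I do not anticipate a serious obstacle here: the structure theorem for closed subgroups of a torus and the divisibility of $\mathbb{R}/\mathbb{Z}$ give the result almost immediately. The only mild subtlety is to verify that the algebraic splitting lifts to a splitting in the category of Lie groups, but this is automatic because the complement $H$ is finite and hence discrete.
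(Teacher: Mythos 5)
Your proof is correct and follows essentially the same approach as the paper: take the identity component $U'=\Gamma^0$, observe $\Gamma/U'$ is a finite cyclic subgroup of $U/U'\cong\mathbb{R}/\mathbb{Z}$, and use divisibility of $U'$ to adjust a lift of a generator so that it has exact order $K$, yielding the splitting. The only cosmetic difference is that you phrase the last step as splitting a short exact sequence, whereas the paper constructs the complement $H$ directly.
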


\begin{proof}
Let $\Gamma$ be a $1$-dimensional subgroup of $U$.  Let $U'$ denote the identity component of $\Gamma$. Then $U'$ is a $1$-dimensional subtorus of $U$, and $\Gamma/U'$ is a discrete subgroup of $U/U' \cong \mathbb{R}/\mathbb{Z}$.  Hence, considered in $\mathbb{R}/\mathbb{Z}$, the group $\Gamma/U'$ is equal to $\langle 1/K \rangle$ for some natural number $K$.  Let $g \in \Gamma$ be such that $gU'$ is identified with $1/K$.  Then $Kg \in U'$, and (since $U' \cong \mathbb{R}/\mathbb{Z}$ is a divisible group) there is some $g' \in U'$ such that $Kg=Kg'$.  Now the element $g'':=g-g'$ has order $K$, and $g''U'$ is still identified with $1/K$.  Let $H:=\langle g'' \rangle$.  Then $|H|=K$ and $\Gamma=U' \times H$, as desired.
\end{proof}

Notice that every such isomorphism $\Gamma \cong (\mathbb{R}/\mathbb{Z}) \times (\mathbb{Z}/K\mathbb{Z})$ preserves the set of points with all coordinates rational.  In particular, there is a rational change of coordinates on $(\mathbb{R}/\mathbb{Z})^2$ such that in the new coordinates we have $\Gamma=(\mathbb{R}/\mathbb{Z}) \times \langle 1/K \rangle_\mathbb{Z} \subseteq (\mathbb{R}/\mathbb{Z})^2$.  We will now describe the intersection of such a $\Gamma$ with an arbitrary $1$-dimensional subtorus.

\begin{lemma}\label{lem:intersection-structure}
Let $K$ be a natural number, and let $A',B'$ be coprime integers with $B' \neq 0$.  Then
$$((\mathbb{R}/\mathbb{Z}) \times \langle 1/K \rangle_\mathbb{Z}) \cap \langle (A',B') \rangle_\mathbb{R}=\bigcup_{\ell=0}^{K-1} ((a\ell/(Kq)+\langle 1/q \rangle_\mathbb{Z})\times \{\ell/K\}),$$
where $q:=|B'|$, and $0 \leq a<K$ satisfies $a \equiv A' \pmod{K}$ if $B'>0$ and satisfies $a \equiv -A' \pmod{K}$ if $B'<0$.
\end{lemma}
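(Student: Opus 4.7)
The plan is to directly parametrize both sides and match them. A point $(x,y) \in (\mathbb{R}/\mathbb{Z})^2$ lies in $\langle (A',B') \rangle_\mathbb{R}$ if and only if there exists $t \in \mathbb{R}$ with $x \equiv tA' \pmod 1$ and $y \equiv tB' \pmod 1$; and it lies in $(\mathbb{R}/\mathbb{Z}) \times \langle 1/K \rangle_\mathbb{Z}$ precisely when $y = \ell/K$ for some integer $0 \leq \ell \leq K-1$. So for each such $\ell$ I would determine the set of $x \in \mathbb{R}/\mathbb{Z}$ for which some $t$ witnesses the first condition above, and then take the union over $\ell$.

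First, I would solve $tB' \equiv \ell/K \pmod 1$ for $t$. Since $B' \neq 0$, this equation has the one-parameter family of real solutions $t = \ell/(KB') + m/B'$ with $m \in \mathbb{Z}$. Substituting into the first-coordinate equation gives
\[
x \equiv \frac{A'\ell}{KB'} + \frac{A'm}{B'} \pmod 1.
\]
Now the coprimality $\gcd(A',B')=1$ combined with Bezout's identity shows that, as $m$ ranges over $\mathbb{Z}$, the residues $A'm/B' \pmod 1$ range precisely over $\langle 1/|B'| \rangle_\mathbb{Z} = \langle 1/q \rangle_\mathbb{Z}$. Hence the set of admissible $x$ for a given $\ell$ is the coset $A'\ell/(KB') + \langle 1/q \rangle_\mathbb{Z}$.

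Finally, I would rewrite the coset representative $A'\ell/(KB')$ in the form $a\ell/(Kq)$ as claimed. Assume first $B'>0$, so $B'=q$ and $A'\ell/(KB') = A'\ell/(Kq)$. If $a$ is the unique integer in $[0,K)$ with $a \equiv A' \pmod K$, then $A' - a = Km$ for some $m \in \mathbb{Z}$, and therefore
\[
\frac{A'\ell}{Kq} - \frac{a\ell}{Kq} = \frac{m\ell}{q} \in \langle 1/q \rangle_\mathbb{Z},
\]
so the cosets $A'\ell/(Kq) + \langle 1/q\rangle_\mathbb{Z}$ and $a\ell/(Kq) + \langle 1/q \rangle_\mathbb{Z}$ coincide. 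For $B'<0$ we have $B' = -q$ and $A'\ell/(KB') = -A'\ell/(Kq)$, so choosing $a \equiv -A' \pmod K$ in $[0,K)$ handles this case identically. Taking the union over $\ell$ yields the claimed decomposition.

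The argument is entirely routine; the only real bookkeeping hurdle is keeping track of the sign of $B'$ in selecting the representative $a$, which is the reason for the two-case definition of $a$ in the statement.
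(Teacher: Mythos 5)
Your proof is correct and takes essentially the same approach as the paper: both compute the intersection by solving for the parameter $t$ on each level $y=\ell/K$, use the coprimality of $A',B'$ to identify the set of admissible $x$-coordinates as the full subgroup $\langle 1/q\rangle_\mathbb{Z}$, and then replace the coset representative $A'\delta\ell/(Kq)$ by $a\ell/(Kq)$ with $a$ reduced modulo $K$. The paper packages these steps slightly more group-theoretically (first identifying the whole intersection $G$ as the cyclic group $\langle (A'\delta/(Kq),1/K)\rangle_{\mathbb{Z}}$, then reading off each level), but the underlying computation is identical to yours.
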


\begin{proof}
Write
$$G:=((\mathbb{R}/\mathbb{Z}) \times \langle 1/K \rangle_\mathbb{Z}) \cap \langle (A',B') \rangle_{\mathbb{R}}.$$
Since $B' \neq 0$, we know that the $1$-dimensional subtorus $\langle (A',B') \rangle_\mathbb{R}$ is not equal to $(\mathbb{R}/\mathbb{Z}) \times \{0\}$.  Hence $G$ is a discrete group, and we can compute
\begin{align*}
G &=\{t(A',B'): t \in \mathbb{R}, tB' \in \langle 1/K \rangle_\mathbb{Z}\}\\
 &=\langle (A'/(KB'),1/K) \rangle_{\mathbb{Z}}\\
 &=\langle (A'\delta/(Kq),1/K) \rangle_{\mathbb{Z}},
\end{align*}
where $q=|B'|$ and $\delta:=B'/|B'| \in \{-1,1\}$.  Since $A',B'$ are coprime, we have
$$G \cap ((\mathbb{R}/\mathbb{Z}) \times \{0\})=\langle (A'\delta/q,0)\rangle_\mathbb{Z}=\langle 1/q \rangle_\mathbb{Z} \times \{0\}.$$
Finally, for each $0 \leq \ell<K$, the point $(A'\delta \ell/(Kq),\ell/K) \in G$ tells us that
$$G \cap ((\mathbb{R}/\mathbb{Z}) \times \{\ell/K\})=(A'\delta \ell/(Kq),\ell/K)+(\langle 1/q \rangle_\mathbb{Z} \times \{0\})=((a\ell/(Kq)+\langle 1/q \rangle_\mathbb{Z})\times \{\ell/K\}),$$
where $a \equiv A'\delta \pmod{K}$, as desired.

%We claim that $|G|=Kq$.  Indeed, the subtorus $\langle (A',B') \rangle_\mathbb{R}$ is the bijective image, under the canonical projection map, of the half-open line segment in $\mathbb{R}^2$ from $(0,0)$ to $(A',B')$.  So $|G|$ is the number of intersections of this line segment with the subgroup $\mathbb{R} \times \langle 1/K \rangle_\mathbb{Z} \subseteq \mathbb{R}^2$, and the latter quantity is clearly $K|B'|=Kq$.
%We have
%$$|G|=\lcm \left(\frac{Kq}{\gcd(A',Kq)},K  \right),$$
%and we claim that this is equal to $Kq$.  Indeed, let $p$ be a prime, and suppose that $p^x,p^y,p^z$ are the largest powers of $p$ dividing $K,A',q$ (respectively); note that either $y$ or $z$ must be $0$ since $A',B'$ are coprime.  Then the largest power of $p$ dividing $|G|$ is
%$$\lcm \left(\frac{p^x p^z}{\gcd(p^y,p^x p^z)},p^x\right)=\lcm\left(p^{x+z-\min(y,x+z)},p^x\right)=p^{\max(x+z-\min(y,x+z),x)}=p^{x+z}.$$
%Since this holds for all $p$, we have $|G|=Kq$, as claimed.

%Thus $|G \cap ((\mathbb{R}/\mathbb{Z}) \times \{0\})|=q$, and since $\langle 1/q \rangle_\mathbb{Z}$ is the unique subgroup of $\mathbb{R}/\mathbb{Z}$ of size $q$, we conclude that
%$$G \cap ((\mathbb{R}/\mathbb{Z}) \times \{0\})=\langle 1/q \rangle_\mathbb{Z} \times \{0\}.$$
%Then for every $0 \leq \ell<K$ we have
%\begin{align*}
%G \cap ((\mathbb{R}/\mathbb{Z}) \times \{\ell/K\}) &=(A'\delta\ell/(Kq)+\langle 1/q \rangle_\mathbb{Z}) \times \{\ell/K\}\\
% &=(a\ell/(Kq)+\langle 1/q \rangle_\mathbb{Z}) \times \{\ell/K\},
%\end{align*}
%where $a \equiv A'\delta \pmod{K}$.
\end{proof}

We will apply the first lemma with $\Gamma=U_{i,j,\epsilon}$ and (after a change of coordinates) the second lemma with $\langle (A',B') \rangle_\mathbb{R}=T$.  We first require a bit of setup.

As in the sketch in Section~\ref{sec:sketch}, suppose that $U \subseteq (\mathbb{R}/\mathbb{Z})^n$ ($n \geq 2$) is a proper $2$-dimensional subtorus that is not contained in any subspace of the form $\{x_i=x_j\}$ or $\{x_i=-x_j\}$ for $1 \leq i<j \leq n$.  Then $U$ is the image modulo $\mathbb{Z}^n$ of the subspace $\langle u,v \rangle_\mathbb{R}$ for some vectors $u,v \in \mathbb{Z}^n$, and we can choose $u,v$ such that
\begin{equation}\label{eq:lattice-condition}
\langle u,v \rangle_\mathbb{R} \cap \mathbb{Z}^n=\langle u,v \rangle_\mathbb{Z};
\end{equation}
fix such a choice of $u,v$.
%Now the isomorphism $\psi:\langle u,v \rangle_\mathbb{R} \to \mathbb{R}^2$ given by $\psi(u):=(1,0)$ and $\psi(v):=(0,1)$ descends to an isomorphism from $U$ to $(\mathbb{R}/\mathbb{Z})^2$.
Every $1$-dimensional subtorus $T$ of $U$ is of the form
$$T=\langle A u+B v \rangle_\mathbb{R},$$
i.e., $T$ is the image modulo $\mathbb{Z}^n$ of $\langle Au+Bv \rangle_\mathbb{R}$, for some coprime integers $A,B$ with $A \geq 0$ and $(A,B) \notin \{(0,0),(0,-1)\}$. This correspondence between $T$ and $(A,B)$ is one-to-one.  For ease of reference, let $\mathcal{T}$ denote the set of all such pairs $(A,B)$.

For the remainder of this subsection, fix a choice of $1 \leq i<j \leq n$ and a choice of a sign $\epsilon \in \{+,-\}$.  The subgroup
$$U_{i,j,\epsilon}:=U \cap \{x_i=\epsilon x_j\}$$
is a $1$-dimensional subgroup of $U \cong (\mathbb{R}/\mathbb{Z})^2$ (recall our assumption that $U$ does not lie in the subspace $\{x_i=\epsilon x_j\}$), so
Lemma~\ref{lem:1-dim-subgroup} tells us that $U_{i,j,\epsilon}$ is isomorphic to the cartesian product of $\mathbb{R}/\mathbb{Z}$ and $\mathbb{Z}/K\mathbb{Z}$ for some natural number $K=K_{i,j,\epsilon}$.   In order to apply Lemma~\ref{lem:intersection-structure}, we need to write down an explicit isomorphism $U \cong (\mathbb{R}/\mathbb{Z})^2$ in coordinates ``adapted'' to $U_{i,j,\epsilon}$.  To this end, fix a primitive vector $u'=u'_{i,j,\epsilon} \in \mathbb{Z}^n$ such that the identity component of $U_{i,j,\epsilon}$ equals $\langle u' \rangle_\mathbb{R}$.  Now fix a vector $v'=v'_{i,j,\epsilon} \in \mathbb{Z}^n$ such that
$$\langle u',v' \rangle_\mathbb{Z}=\langle u,v \rangle_\mathbb{Z};$$
such a $v'$ exists by the primitivity assumption on $u'$.  Hence the linear map $\psi=\psi_{i,j,\epsilon}:\langle u,v \rangle_\mathbb{R} \to \mathbb{R}^2$ given by $\psi(u'):=(1,0)$ and $\psi(v'):=(0,1)$ descends to an isomorphism from $U$ to $(\mathbb{R}/\mathbb{Z})^2$.  Since $\psi$ sends the identity component of $U_{i,j,\epsilon}$ to $\mathbb{R}/\mathbb{Z} \times \{0\}$, we see that
$$\psi(U_{i,j,\epsilon})=(\mathbb{R}/\mathbb{Z}) \times \langle 1/K \rangle_\mathbb{Z}$$
is a concrete witness to the isomorphism described earlier in this paragraph.

Write
$$\psi(T)=\psi(\langle Au+Bv \rangle_\mathbb{R})=\langle A\psi(u)+B\psi(v) \rangle_\mathbb{R}.$$
Since $\langle u',v' \rangle_\mathbb{Z}=\langle u,v \rangle_\mathbb{Z}$, the vectors $\psi(u),\psi(v)$ have integer coordinates, say,
$$\psi(u)=(z_1,z_2) \quad \text{and} \quad \psi(v)=(z_3,z_4),$$
with $z_1,z_2,z_3,z_4 \in \mathbb{Z}$ (depending on $i,j,\epsilon$).  So we write
\begin{equation}\label{eq:T-in-new-coordinates}
\psi(T)=\langle A\psi(u)+B\psi(v)\rangle_\mathbb{R}=\langle (Az_1+Bz_3,Az_2+Bz_4)\rangle_{\mathbb{R}}.
\end{equation}
Since $\psi(u),\psi(v)$ are linearly independent, we have $z_1z_4 \neq z_2z_3$.  Now we apply Lemma~\ref{lem:intersection-structure} in order to obtain the main result of this section, as follows.

\begin{proposition}\label{prop:cyclic-subgroup-parameters}
Let $n,U,u,v,i,j,\epsilon, K=K_{i,j,\epsilon},\psi=\psi_{i,j,\epsilon}, z_1=z_{1,i,j,\epsilon},z_2=z_{2,i,j,\epsilon},z_3=z_{3,i,j,\epsilon},z_4=z_{4,i,j,\epsilon}$ be as above.  Then there is a natural number $M$ (depending on all of the parameters introduced so far) such that for each choice of $0 \leq \aleph,\beth<M$ and each choice of a sign $\delta \in \{+,-\}$, the following holds: There are rational numbers $E=E_{\aleph, \beth,\delta}, F=F_{\aleph, \beth,\delta}$ (not both zero) and a nonnegative integer $a= a_{\aleph, \beth,\delta}<K'_{\aleph, \beth,\delta}$ such that for $T=\langle Au+Bv \rangle_\mathbb{R}$, we have
$$\psi(T_{i,j,\epsilon})=\bigcup_{\ell=0}^{K-1} ((a\ell/(Kq)+\langle 1/q \rangle_\mathbb{Z})\times \{\ell/K\})$$
with
$$q=EA+FB$$
whenever $(A,B) \in \mathcal{T}$ satisfies $A \equiv \aleph \pmod{M}$ and $B \equiv \beth \pmod{M}$, and
$$\delta(Az_2+Bz_4)>0.$$
\end{proposition}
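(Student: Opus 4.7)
The plan is to combine Lemma~\ref{lem:intersection-structure} with a residue-class analysis of the primitive generator of $\psi(T)$. From \eqref{eq:T-in-new-coordinates}, $\psi(T) = \langle (Az_1+Bz_3,\,Az_2+Bz_4) \rangle_\mathbb{R}$; set
\[
g := \gcd(Az_1+Bz_3,\,Az_2+Bz_4), \quad A' := (Az_1+Bz_3)/g, \quad B' := (Az_2+Bz_4)/g,
\]
so that $A', B'$ are coprime and $\psi(T) = \langle (A', B')\rangle_\mathbb{R}$. The hypothesis $\delta(Az_2+Bz_4) > 0$ forces $B' \neq 0$ with $\delta = B'/|B'|$, so Lemma~\ref{lem:intersection-structure} yields the asserted union decomposition, with $q = |B'| = \delta(Az_2+Bz_4)/g$ and $a$ the reduction of $\delta A'$ modulo $K$.

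The crux is showing that $g$ is constant on each residue class $(\aleph, \beth)$ modulo a suitable $M$. The identities
\[
z_4(Az_1+Bz_3) - z_3(Az_2+Bz_4) = AD, \qquad z_2(Az_1+Bz_3) - z_1(Az_2+Bz_4) = -BD,
\]
where $D := z_1 z_4 - z_2 z_3 \neq 0$ by the linear independence of $\psi(u),\psi(v)$, give $g \mid AD$ and $g \mid BD$; since $\gcd(A,B)=1$, this forces $g \mid D$. So $g$ ranges over only the finitely many divisors of $D$. For each such divisor $d$, the joint condition ``$d \mid Az_1+Bz_3$ and $d \mid Az_2+Bz_4$'' depends only on $(A,B) \bmod d$, and thus on $(A,B)\bmod |D|$. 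Setting $M := |D|K$ therefore makes $g$ a function of $(\aleph, \beth)$; once $g$ is pinned down, the residue of $A' = (Az_1+Bz_3)/g$ modulo $K$ is determined by $(A,B) \bmod gK$, which is coarser than $(A,B)\bmod M$, so $a$ depends only on $(\aleph,\beth,\delta)$. Taking $E := \delta z_2/g$ and $F := \delta z_4/g$ gives rationals depending only on $(\aleph,\beth,\delta)$ that satisfy $q = EA + FB$, and $(E,F) \neq (0,0)$ because $(z_2,z_4) = (0,0)$ would force $\langle \psi(u),\psi(v)\rangle_\mathbb{Z} \subseteq \mathbb{Z}\times\{0\}$, contradicting $\psi(v') = (0,1)$.

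The only potentially worrying case is the degenerate pair $(A_0, B_0) \in \mathcal{T}$ with $A_0 z_1 + B_0 z_3 = 0$: at such a pair one has $g = |A_0 z_2 + B_0 z_4|$ rather than an obvious divisor-of-$D$ expression, and one might worry that this spoils the linear formula $q = EA + FB$ at this isolated point in its residue class. A short verification dispels the concern: coprimality of $A_0, B_0$ forces $(A_0, B_0) = \pm(z_3, -z_1)/\gcd(z_1, z_3)$, so $A_0 z_2 + B_0 z_4 = \mp D/\gcd(z_1, z_3)$, and the generic value of $g$ predicted by the residue-class argument on this same class equals $\gcd(D,\, A_0 z_2 + B_0 z_4) = |D|/\gcd(z_1,z_3)$, which matches the degenerate value. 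So the formula extends seamlessly through the degenerate point, and no pair in $\mathcal{T}$ needs to be excluded.
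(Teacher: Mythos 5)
Your proposal takes the same basic route as the paper's own proof: apply Lemma~\ref{lem:intersection-structure} to the primitive generator $(A',B')$ of $\psi(T)$, then show that the gcd $g = \gcd(Az_1+Bz_3, Az_2+Bz_4)$ is constant on residue classes of $(A,B)$ via the identities exhibiting $g \mid AD$ and $g \mid BD$. Your argument is correct, and in one respect it is more careful than the paper's: the paper takes $M := |z_1z_4-z_2z_3|$, which only pins down $g$ (equivalently $\omega$), not the parameter $a \equiv \delta A' \pmod K$. In fact, since $\langle u',v'\rangle_\mathbb{Z} = \langle u,v\rangle_\mathbb{Z}$ forces $|z_1z_4-z_2z_3| = 1$, the paper's $M$ equals $1$, which cannot determine $a$ unless $K = 1$; the missing factor of $K$ that you include in $M := |D|K$ is precisely what is needed to make the claim ``$a = a_{\aleph,\beth,\delta}$'' literally true. (The paper's downstream applications partition by residues modulo multiples of $K$ anyway, so the discrepancy is repaired later, but your statement of $M$ is the one that supports the proposition as written.) Your final paragraph on the degenerate pair with $A_0z_1+B_0z_3 = 0$ is unnecessary: the divisibility argument $g \mid D$, and the observation that each ``$d \mid Az_1+Bz_3$ and $d \mid Az_2+Bz_4$'' is a condition on $(A,B) \bmod d$, already applies verbatim at such pairs, so the residue-class argument does not need a separate check there. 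It is harmless, though, and your explicit verification that $(E,F) \neq (0,0)$ is a welcome (if slightly roundabout — it also follows directly from $z_1z_4-z_2z_3 \neq 0$) addition that the paper leaves implicit.
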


\begin{proof}
Let $$\omega:=\gcd(Az_1+Bz_3,Az_2+Bz_4).$$
Then the integers $A':=(Az_1+Bz_3)/\omega$ and $B':=(Az_2+Bz_4)/\omega$ are coprime, and we can apply Lemma~\ref{lem:intersection-structure} to find that
$$\psi(T_{i,j,\epsilon})=\bigcup_{\ell=0}^{K-1} ((a\ell/(Kq)+\langle 1/q \rangle_\mathbb{Z})\times \{\ell/K\}),$$
with $q:=\delta B'$ and $0 \leq a<K$ satisfying $a \equiv \delta A' \pmod{K}$ (by the definition of $\delta$).  In particular, we will obtain the conclusion of the lemma with the choices
$$E:=\delta z_2/\omega \quad \text{and} \quad F:=\delta z_4/\omega.$$
To complete the proof of the proposition, it remains to show that the quantities $A',B'$ are linear functions of $A,B$, under appropriate modular conditions.  In particular, it suffices to show that $\omega$ depends only on the residues of $A,B$ modulo some fixed natural number $M$ to be specified shortly.

Notice that $\omega$ divides
$$z_4(Az_1+Bz_3)-z_3(Az_2+Bz_4)=(z_1z_4-z_2z_3)A$$
and
$$z_1(Az_2+Bz_4)-z_2(Az_1+Bz_3)=(z_1z_4-z_2z_3)B.$$
Since $A,B$ are coprime by assumption, we conclude that $\omega$ divides
$$M:=|z_1z_4-z_2z_3|,$$
which is nonzero by the remark following \eqref{eq:T-in-new-coordinates}.  It follows that $\omega$ depends only on the residues of $Az_1-Bz_3,Az_2+Bz_4$ modulo $M$, which in turn depend only on the residues of $A,B$ modulo $M$.
\end{proof}

\subsection{Approximating minima}\label{sec:approximating-minima}
The next set of lemmas concerns approximating minima of continuous piecewise-linear functions on $\mathbb{R}/\mathbb{Z}$; see Figure~\ref{fig:approx}.

For $\tau,b \in \mathbb{Q}$ and $q \in \mathbb{N}$, define
$$\Approx^-(\tau,b;q):=\min\{\tau-(b/q+r/q): \text{$r \in \mathbb{Z}$, $b/q+r/q \leq \tau$}\}$$
and
$$\Approx^+(\tau,b;q):=\min\{(b/q+r/q)-\tau: \text{$r \in \mathbb{Z}$, $b/q+r/q \geq \tau$}\}$$
to be the errors in best approximations to $\tau$ (from below and above) by numbers of the form $b/q+r/q$, for $r \in \mathbb{Z}$.  Since $\Approx^-(\tau,b;q)=\Approx^-(\tau+1/q,b;q)$, this quantity is also well-defined for $\tau \in \mathbb{Q}/\mathbb{Z}$ (and likewise for $\Approx^+(\tau,b;q)$).  The next lemma shows that once we specify the residue class of $q$ with respect to some modulus depending on $\tau,b$, the functions $\Approx^-(\tau,b;q),\Approx^+(\tau,b;q)$ have a simple form. This calculation is the source of the expressions $1/\Prog(\alpha,\beta)$ in Theorem~\ref{thm:relative-spectrum-improved}.

\begin{lemma}\label{lem:approximating-by-1/q}
Let $\tau,b \in \mathbb{Q}$, and write $\tau=w/x$ and $b=y/z$ with $w,x,y,z$ nonnegative integers.  Let $q$ be a natural number.  Then
$$\Approx^-(\tau,b;q)=\frac{R^-}{xzq} \quad \text{and} \quad \Approx^+(\tau,b;q)=\frac{R^+}{xzq},$$
where $0 \leq R^-,R^+ <xz$ satisfy
$$R^- \equiv wzq-xy \pmod{xz} \quad \text{and} \quad R^+ \equiv xy-wzq \pmod{xz}.$$
%If $wzQ \equiv xy \pmod{xz}$, then $$\Approx^-(\tau,b;xzs+Q)=\Approx^+(\tau,b;xzs+Q)=0$$ for all natural numbers $s$.  If $wzQ \not\equiv xy \pmod{xz}$, then there are rational numbers $\alpha_R^-,\beta_R^-,\alpha_R^+,\beta_R^+$ with $\beta_R^-,\beta_R^+>0$ such that
%$$\Approx^-(\tau,b;xzs+Q)=\frac{1}{\alpha_R^-+\beta_R^-s}$$
%and
%$$\Approx^+(\tau,b;xzs+Q)=\frac{1}{\alpha_R^++\beta_R^+s}$$
%for all natural numbers $s$ that are sufficiently large in terms of $\tau,b$.
\end{lemma}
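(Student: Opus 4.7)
The plan is to reduce both quantities to an elementary minimization over integers by clearing denominators, and then observe that the minimization becomes a question about least nonnegative residues modulo $xz$.

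First I would write every expression of the form $b/q + r/q$ in the common denominator $zq$, namely $b/q + r/q = (y + rz)/(zq)$. Combining with $\tau = w/x$ over the common denominator $xzq$, I obtain
\[
\tau - \left(\frac{b}{q} + \frac{r}{q}\right) = \frac{wzq - xy - xrz}{xzq},
\qquad
\left(\frac{b}{q} + \frac{r}{q}\right) - \tau = \frac{xy + xrz - wzq}{xzq}.
\]
Thus computing $\Approx^-(\tau,b;q)$ amounts to finding the minimum nonnegative value of the numerator $N^-(r) := wzq - xy - xrz$ as $r$ ranges over $\mathbb{Z}$, and computing $\Approx^+(\tau,b;q)$ amounts to finding the minimum nonnegative value of $N^+(r) := xy + xrz - wzq$.

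Next, since $r$ ranges over all of $\mathbb{Z}$, the quantity $xrz$ takes every integer multiple of $xz$. Hence the set of values of $N^-(r)$ is exactly $(wzq - xy) + xz\mathbb{Z}$, and the smallest nonnegative element of this coset is the unique integer $R^-$ with $0 \le R^- < xz$ and $R^- \equiv wzq - xy \pmod{xz}$. An identical argument with $N^+$ gives the unique integer $R^+$ with $0 \le R^+ < xz$ and $R^+ \equiv xy - wzq \pmod{xz}$. Dividing by $xzq$ produces the claimed formulas $\Approx^-(\tau,b;q) = R^-/(xzq)$ and $\Approx^+(\tau,b;q) = R^+/(xzq)$.

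This proof is essentially a bookkeeping exercise; the only real step is noticing that once the common denominator $xzq$ is fixed, the minimization problem is over multiples of $xz$ and therefore solved by reduction modulo $xz$. I do not anticipate any genuine obstacle, though some care is warranted in checking that the minimization sets are nonempty (which follows because $N^-(r)$ and $N^+(r)$ take arbitrarily large positive values as $r \to \mp\infty$ and $r \to \pm\infty$ respectively, so nonnegative values of each are attained) and in handling the case $xy \equiv wzq \pmod{xz}$, in which both $R^-$ and $R^+$ vanish, reflecting that $\tau$ is exactly of the form $(b+r)/q$.
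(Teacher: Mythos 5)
Your proof is correct and essentially matches the paper's: both reduce the minimization over $r$ to finding the least nonnegative residue of $wzq-xy$ (resp.\ $xy-wzq$) modulo $xz$, the only cosmetic difference being that you phrase this in terms of the coset $(wzq-xy)+xz\mathbb{Z}$ while the paper identifies the optimal $r$ via a floor function and extracts the fractional part of $(wzq-xy)/(xz)$.
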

Notice that $R^-,R^+$ depend on only the residue of $q$ modulo $xz$, and that the parameters $w,x,y,z$ depend on only $\tau,b$.  Note also that $R^-+R^+=xz$ except when $R^-=R^+=0$.

\begin{proof}
We prove the lemma only for $\Approx^-(\tau,b,q)$ since the argument for $\Approx^+(\tau,b,q)$ is identical.  In the definition of $\Approx^-(\tau,b;q)$, the minimum is achieved for $r=\lfloor q(\tau-b/q)\rfloor=\lfloor q\tau-b \rfloor$.  We compute
$$q\tau-b= wq/x-y/z =(wzq-xy)/(xz).$$
The fractional part of $(wzq-xy)/(xz)$ is $R^-/(xz)$, so
$$r=(q\tau-b)-R^-/(xz)$$
and
$$\Approx^-(\tau,b;q)=\tau-(b/q+r/q)=R^-/(xzq),$$
as desired.
\end{proof}
%If $wzQ \equiv xy \pmod{xz}$, then $r=\lfloor q\tau-b \rfloor=q\tau-b$ and
%$\tau-(b/q+r/q)=0$, so $\Approx^-(\tau,b;xzs+Q)=0$.  Now suppose $wzQ \not\equiv xy \pmod{xz}$.  Let $0< Q'<xz$ be such that $wzQ-xy \equiv Q' \pmod{xz}$.  Then
%$$r=q\tau-b-Q'/(xz)$$
%and
%$$\tau-(b/q+r/q)=\frac{Q'}{xzq}=\frac{Q'}{x^2 z^2 s+xzQ}=\frac{1}{(x^2 z^2/Q')s+xzQ/Q'},$$
%so the lemma for follows on setting $\alpha^-_Q:=xzQ/Q'$ and $\beta^-_Q:=x^2 z^2/Q'$.

We will now apply this lemma where $\tau$ ranges over the points at which a piecewise-linear function achieves its minimum value.

\begin{lemma}\label{lem:D-on-a-slice}
Let $f: \mathbb{R}/\mathbb{Z} \to \mathbb{R}$ be a continuous, piecewise-linear function with finitely many pieces, each with rational slope and rational endpoints, and let $b$ be a fixed rational number.  Then there is a natural number $M$ (depending on $f,b$) such for each integer $0 \leq Q <M$, the following holds: There is a nonnegative rational $\gamma_Q$ (depending also on $f,b$) such that
\begin{equation}\label{eq:min-f-on-an-AP}
\min_{t \in b/q+\langle 1/q \rangle} f(t)=\min_{t \in \mathbb{R}/\mathbb{Z}}f(t)+\frac{\gamma_Q}{q}
\end{equation}
for all sufficiently large natural numbers $q$ satisfying $q \equiv Q \pmod{M}$.
\end{lemma}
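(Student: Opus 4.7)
The plan is to reduce the lemma to a straightforward application of Lemma~\ref{lem:approximating-by-1/q} at each of the (finitely many) points where the continuous piecewise-linear function $f$ attains its minimum value $m:=\min_{t\in\mathbb{R}/\mathbb{Z}}f(t)$. First, I would separate two cases. If $f$ is constant equal to $m$ on some interval $J$ of positive length, then for every $q$ with $1/q<|J|$ the arithmetic progression $b/q+\langle 1/q\rangle$ intersects $J$, so the left-hand side of \eqref{eq:min-f-on-an-AP} equals $m$ and we can take $\gamma_Q=0$. So after accounting for these flat pieces, we may assume the minimum is attained only on a finite set $\{\tau_1,\dots,\tau_H\}$, where each $\tau_h$ is a rational corner of $f$ with rational one-sided slopes $s_h^-,s_h^+\geq 0$ (not both zero). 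Write each $\tau_h=w_h/x_h$ and $b=y/z$ in lowest terms.

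Next, I would localize around each $\tau_h$. Because $f$ has finitely many linear pieces, there is a positive rational $\eta$ such that $f(t)=m+s_h^-(\tau_h-t)$ on $[\tau_h-\eta,\tau_h]$ and $f(t)=m+s_h^+(t-\tau_h)$ on $[\tau_h,\tau_h+\eta]$ for every $h$, and such that $f(t)\geq m+\eta\cdot(\text{something positive})$ outside the union of these $2H$ intervals. Provided $1/q<\eta$, the closest elements of $b/q+\langle 1/q\rangle$ to $\tau_h$ from the left and right lie in these linear pieces, and the minimum of $f$ on $b/q+\langle 1/q\rangle$ is achieved at one of these $2H$ nearest points. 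Thus, for $q$ sufficiently large,
\begin{equation*}
\min_{t\in b/q+\langle 1/q\rangle}f(t)=m+\min_{1\le h\le H}\bigl(\min\{s_h^-\cdot\Approx^-(\tau_h,b;q),\;s_h^+\cdot\Approx^+(\tau_h,b;q)\}\bigr).
\end{equation*}

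Now I would apply Lemma~\ref{lem:approximating-by-1/q} to each $(\tau_h,b)$ pair. That lemma expresses $\Approx^\pm(\tau_h,b;q)=R_{h,Q}^\pm/(x_hzq)$, where $R_{h,Q}^\pm$ depends only on the residue of $q$ modulo $x_hz$. Choosing $M:=\lcm_h(x_hz)$, the entire inner minimum becomes a function of the residue $Q:=q\bmod M$ alone, divided by $q$. Concretely, setting
\begin{equation*}
\gamma_Q:=\min_{1\le h\le H}\min\!\left\{\frac{s_h^-\,R_{h,Q}^-}{x_hz},\;\frac{s_h^+\,R_{h,Q}^+}{x_hz}\right\}\in\mathbb{Q}_{\geq 0}
\end{equation*}
gives the desired identity \eqref{eq:min-f-on-an-AP} for all $q\equiv Q\pmod M$ with $1/q<\eta$.

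The main technical annoyance, rather than a genuine obstacle, is verifying that for large $q$ the minimum really is attained at one of the $2H$ near-$\tau_h$ points and not at some $t$ far from every $\tau_h$: this follows because outside the chosen neighborhoods $f$ exceeds $m$ by a fixed positive amount, while inside those neighborhoods $f-m$ is $O(1/q)$. A second small point is that when $s_h^-=0$ (respectively $s_h^+=0$), the corresponding term in the minimum is $0$, which is consistent with $\tau_h$ lying at the right (respectively left) end of an interval on which $f\equiv m$; this can only help, and is already subsumed by the first case handled above.
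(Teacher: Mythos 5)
Your proof is correct and follows essentially the same route as the paper's: dispose of the flat-minimum case, localize to the finitely many strict minima $\tau_h$, use the one-sided linearity to reduce to $\Approx^\pm(\tau_h,b;q)$, and invoke Lemma~\ref{lem:approximating-by-1/q} with a modulus $M$ large enough to fix all the $R_{h}^\pm$. The only cosmetic difference is your choice $M=\lcm_h(x_hz)$ versus the paper's $M=\lcm(z,x_1,\ldots,x_H)$; yours is the safer reading of Lemma~\ref{lem:approximating-by-1/q} as literally stated (the paper's smaller $M$ also works because, when $\tau_h=w_h/x_h$ is in lowest terms, $R_h^\pm$ actually depends only on $q$ modulo $x_h$), but this is not a substantive deviation.
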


\begin{proof}
Write $m:=\min_{t \in \mathbb{R}/\mathbb{Z}}f(t)$.  We first dispose of the trivial case where $f(t)=m$ on some interval $I \subseteq \mathbb{R}/\mathbb{Z}$ of strictly positive length.  Since $b/q+\langle 1/q \rangle$ is $1/q$-dense in $\mathbb{R}/\mathbb{Z}$, we see that
$$\min_{t \in b/q+\langle 1/q \rangle} f(x)=m$$
for all $q \geq |I|$ (with no mention of modular restrictions), and the conclusion of the lemma holds with $M=1$ and $\gamma_0=0$.

Now suppose that $f$ achieves the value $m$ at only finitely many points $\tau_1, \ldots, \tau_H$.  Then there are some $\delta, \varepsilon>0$ such that $f(t)>m+\delta$ whenever $t$ does not lie within $\varepsilon$ of some $\tau_h$.  Since $f$ is continuous and $b/q+\langle 1/q \rangle$ is $1/q$-dense, we know that $\min_{t \in b/q+\langle 1/q \rangle} f(t)<m+\delta$ for all $q$ sufficiently large (in terms of $f$).  In particular, the minimum in \eqref{eq:min-f-on-an-AP} is achieved at a point $t$ that lies within $\varepsilon$ of some $\tau_h$; for the remainder of the proof we will assume that $q$ is large enough for this to be the case.

For each $\tau_h$, let $-\lambda_h^-, \lambda_h^+$ denote the (rational by assumption) slopes of the pieces of $f$ directly to the left and right (respectively) of $\tau_h$; notice that $\lambda_h^-,\lambda_h^+>0$.  After possibly shrinking $\varepsilon$ further, we find that $$f(\tau_h-t)=m+\lambda_h^-t \quad \text{and} \quad f(\tau_h+t)=m+\lambda_h^+t$$
for every $h$ and for all $0 \leq t<\varepsilon$.  Hence $\min_{t \in b/q+\langle 1/q \rangle} f(t)$ is equal to the minimum of the values
$$m+\lambda_h^- \Approx^-(\tau_h,b;q) \quad \text{and} \quad m+\lambda_h^+ \Approx^+(\tau_h,b;q)$$
over $1 \leq h \leq H$.  Write $b=y/z$ and $\tau_h=w_h/x_h$ as fractions, and let 
$M:=\lcm(z,x_1,\ldots, x_H)$.  Then for each $h$ we have
$$m+\lambda_h^- \Approx^-(\tau_h,b;q)=m+\frac{\lambda_h^-R_h^-}{x_hzq} \quad \text{and} \quad m+\lambda_h^+ \Approx^+(\tau_h,b;q)=m+\frac{\lambda_h^+R_h^+}{x_hzq},$$
where $R_h^-, R_h^+$ are the parameters output by Lemma~\ref{lem:approximating-by-1/q} for $\tau_h,b$.  It is important that $R_h^-,R_h^+$ depend on only the residue of $q$ modulo $x_h z$, which divides $M$.  For each $0 \leq Q<M$, consider the case corresponding to $q \equiv Q \pmod{M}$, and let $\gamma_Q$ denote the minimum of the values $\lambda_h^-R_h^-/(x_h z),\lambda_h^+R_h^+/(x_h z)$ over all $h$.  (Since $\lambda_h^-,\lambda_h^+>0$, we have $\gamma_Q \geq 0$.)  Then for all sufficiently large $q$ congruent to $Q$ modulo $M$, we have
$$\min_{t \in b/q+\langle 1/q \rangle} f(t)=m+\frac{\gamma_Q}{q},$$
as desired.
\end{proof}

We now apply this lemma where $f$ is the restriction of $D$ to a connected component of $U_{i,j,\epsilon}$ and the quantities $b,q$ depend ``nicely'' on the subtorus $T$, as described in Proposition~\ref{prop:cyclic-subgroup-parameters}.

\begin{figure}[!htb]
    \centering
\begin{tikzpicture}
    \draw[thick] (0,9) -- (0,0) -- (12,0) -- (12,9);
    \foreach \x in {1,4,7,10} {
        \fill (\x, 0) circle (2pt);
    }
    \draw[thick] (0,5) -- (2,3) -- (5,9) -- (7,7) -- (8,4) -- (11,6) -- (12,5);
    \draw[dashed] (2,0) -- (2,3);
    \draw[dashed] (1,0) -- (1,4);
    \draw[dashed] (4,0) -- (4,7);
    \draw[dashed] (0,3) -- (2,3);
    \draw[dashed] (0,4) -- (1,4);
    \draw[dashed] (0,7) -- (4,7);
    \node[below] at (2,-0.1) {$\tau$};
    \node[left] at (0,3) {$m$};
    \node[left] at (0,4) {$m+\lambda^-\Approx^-$};
    \node[left] at (0,7) {$m+\lambda^+\Approx^+$};
%    \draw[decorate,decoration={brace,amplitude=10pt}] (1,0) -- (2,0);
    \draw[decorate,decoration={brace,amplitude=10pt}] (2,0) -- (4,0);
    \draw[decorate,decoration={brace,amplitude=10pt}] (4.07,7) -- (4.07,3);
%    \node[below] at (1.2,1) {$\Approx^-$};
    \node[below] at (3,1) {$\Approx^+$};
    \node[right] at (4.4,5) {$\lambda^+ \Approx^+$};
    \node[left] at (3,5) {$\lambda^+$};
\end{tikzpicture}
    \caption{This figure illustrates some of the quantities in Lemmas~\ref{lem:approximating-by-1/q} and~\ref{lem:D-on-a-slice}.  Here we are approximating the minimum at $\tau=1/6$ using the coset $1/12+\langle 1/4 \rangle_\mathbb{Z}$, and the slopes of the pieces to the left and right of $\tau$ are $-\lambda^-=-1$ and $\lambda^+=2$.}
    \label{fig:approx}
\end{figure}

%\begin{proposition}\label{prop:D(T_i,j,eps)-formula-old}
%Let $n \geq 2$ be a natural number, and let $U \subseteq (\mathbb{R}/\mathbb{Z})^n$ be a $2$-dimensional proper subtorus that is not contained in any subspace of the form $\{x_i=x_j\}$ or $\{x_i=-x_j\}$ for $1 \leq i<j \leq n$.  Let $u,v \in \mathbb{Z}^n$ be such that $U=\langle u,v \rangle_\mathbb{R}$.  For $A,B$ coprime integers with $A \geq 0$, let $T:=\langle Au+Bv \rangle_\mathbb{R}$.  Then there is a natural number $M'$ (depending on the parameters introduced so far) such that for each choice of $0 \leq \aleph,\beth<M'$ the following holds: There are a natural number $X=X_{\aleph,\beth}$ and nonzero linear functions $f_1=f_{1,\aleph,\beth},\ldots, f_X=f_{X,\aleph,\beth}$; and $\kappa=\kappa_{\aleph,\beth,\vec{\delta}} \in \{0,1\}$ and rational numbers $E=E_{\aleph,\beth,\vec\delta}, F=F_{\aleph,\beth,\vec\delta}$ for each choice of signs $\vec{\delta}=(\delta_1, \ldots, \delta_X)=(\delta_{1,\aleph,\beth},\ldots, \delta_{X,\aleph,\beth}) \in \{+,-\}^X$, such that
%$$D(T)=D(U)+\frac{\kappa}{EA+FB}$$
%whenever $A \equiv \aleph \pmod{M'}$ and $B \equiv \beth \pmod{M'}$ satisfy
%\begin{equation}\label{eq:regions}
%\delta_{\ell} f_\ell(A,B)> 0
%\end{equation}
%for all $1 \leq \ell \leq X$ and $\sqrt{A^2+B^2}$ is sufficiently large.
%\end{proposition}

\begin{proposition}\label{prop:D(T_i,j,eps)-formula}
Let $n \geq 2$ be a natural number, and let $U \subseteq (\mathbb{R}/\mathbb{Z})^n$ be a $2$-dimensional proper subtorus that is not contained in any subspace of the form $\{x_i=x_j\}$ or $\{x_i=-x_j\}$ for $1 \leq i<j \leq n$.  Let $u,v \in \mathbb{Z}^n$ be such that $U=\langle u,v \rangle_\mathbb{R}$ and \eqref{eq:lattice-condition} is satisfied.  Then there are a natural number $M'$ and a finite set $\mathcal{L}$ of rational half-lines in $\mathbb{R}_{\geq 0} \times \mathbb{R}$ (depending on the parameters introduced so far) such that for each choice of $0 \leq \aleph,\beth<M'$ the following holds:

There are a partition of $(\mathbb{R}_{\geq 0} \times \mathbb{R}) \setminus \mathcal{L}$ into sectors $\sigma_1=\sigma_{\aleph,\beth,1}, \ldots, \sigma_X=\sigma_{\aleph, \beth,X(\aleph,\beth)}$; and $\kappa=\kappa_{\aleph,\beth,k} \in \{0,1\}$ and rationals $E'=E'_{\aleph,\beth,k},F'=F'_{\aleph,\beth,k}$ for each $1 \leq k \leq X(\aleph,\beth)$, such that for $T=\langle Au+Bv \rangle_\mathbb{R}$, we have
$$D(T)=D(U)+\frac{\kappa}{E'A+F'B}$$
whenever $(A,B) \in \mathcal{T} \setminus \mathcal{L}$ satisfies $(A,B) \equiv (\aleph,\beth) \pmod{M'}$, $(A,B) \in \sigma_k$, and $A^2+B^2$ is sufficiently large.  Moreover, for each half-line $L \in \mathcal{L}$, there are $\kappa=\kappa_{\aleph,\beth,L} \in \{0,1\}$ and rationals $E'=E'_{\aleph,\beth,L},F'=F'_{\aleph,\beth,L}$ such that for $T=\langle Au+Bv \rangle_\mathbb{R}$, we have
$$D(T)=D(U)+\frac{\kappa}{E'A+F'B}$$
whenever $(A,B) \in \mathcal{T} \cap L$ satisfies $(A,B) \equiv (\aleph,\beth) \pmod{M'}$ and $A^2+B^2$ is sufficiently large.
\end{proposition}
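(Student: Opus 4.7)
The plan is to use the formula $D(T)=\min_{i,j,\epsilon} D(T_{i,j,\epsilon})$ recalled in Section~\ref{sec:sketch} and to analyze each $D(T_{i,j,\epsilon})$ separately using the cyclic subgroup structure from Proposition~\ref{prop:cyclic-subgroup-parameters}. The final formula $D(T)=D(U)+\kappa/(E'A+F'B)$ will emerge by minimizing finitely many contributions of this shape; the sectors will arise from the comparisons involved in this minimization.

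Fix a triple $(i,j,\epsilon)$. Under the isomorphism $\psi=\psi_{i,j,\epsilon}$, the restriction of $D$ to $U$ pulls back to a continuous, piecewise-linear function on $(\mathbb{R}/\mathbb{Z})^2$ with finitely many pieces, each of rational slope and with rational breakpoints, since $D$ is a maximum of rational translates of absolute values and $\psi^{-1}$ is linear with rational coefficients. Restricting this to each connected component $(\mathbb{R}/\mathbb{Z})\times\{\ell/K\}$ of $\psi(U_{i,j,\epsilon})$ yields a continuous, piecewise-linear function $f_\ell:\mathbb{R}/\mathbb{Z}\to\mathbb{R}$ satisfying the hypotheses of Lemma~\ref{lem:D-on-a-slice}. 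For $(A,B)$ in a residue class $(\aleph,\beth)$ modulo $M$ and with sign $\delta$ of $Az_2+Bz_4$, Proposition~\ref{prop:cyclic-subgroup-parameters} writes $\psi(T_{i,j,\epsilon})$ as a union $\bigcup_\ell ((a\ell/(Kq)+\langle 1/q \rangle_\mathbb{Z})\times\{\ell/K\})$ with $q=EA+FB$; applying Lemma~\ref{lem:D-on-a-slice} to $f_\ell$ with $b=a\ell/K$, for all sufficiently large $q$ in each residue class modulo some $M_\ell$ we obtain
$$\min_{t \in a\ell/(Kq)+\langle 1/q \rangle_\mathbb{Z}} f_\ell(t) = m_\ell + \gamma_\ell/q,$$
where $m_\ell=\min f_\ell$ and $\gamma_\ell\geq 0$ depends on the residue class. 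Taking the minimum over $\ell$ gives $D(T_{i,j,\epsilon})=D(U_{i,j,\epsilon})+\gamma^*/q$ for large enough $q$, where $\gamma^*\geq 0$ is the minimum of the $\gamma_\ell$ over those $\ell$ with $m_\ell=D(U_{i,j,\epsilon})$.

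Now take the overall minimum over $(i,j,\epsilon)$. Since $D(U)=\min_{i,j,\epsilon}D(U_{i,j,\epsilon})$ (again by the perturbation argument of \cite[Proposition 2.1]{thesis}), triples with $D(U_{i,j,\epsilon})>D(U)$ contribute a quantity bounded away from $D(U)$ by a positive constant and hence cannot realize the overall minimum once $A^2+B^2$ is large. The remaining triples each yield an expression $D(U)+\gamma_{(i,j,\epsilon)}/(E_{(i,j,\epsilon)}A+F_{(i,j,\epsilon)}B)$ with positive denominator; the minimum among these is determined by pairwise comparisons, each of which reduces to the sign of a linear form $(\gamma_1 E_2-\gamma_2 E_1)A+(\gamma_1 F_2-\gamma_2 F_1)B$ in $(A,B)$. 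Letting $M'$ be the least common multiple of all moduli arising (from Proposition~\ref{prop:cyclic-subgroup-parameters} and Lemma~\ref{lem:D-on-a-slice}, over all $(i,j,\epsilon)$ and $\ell$), and partitioning $\mathbb{R}_{\geq 0}\times\mathbb{R}$ into open sectors according to the signs of all the comparison forms together with the signs of the $Az_2+Bz_4$, one obtains the desired sector decomposition. On each sector the minimizer is a fixed triple, and rewriting $\gamma^*/(EA+FB)=1/((E/\gamma^*)A+(F/\gamma^*)B)$ when $\gamma^*>0$ (or setting $\kappa=0$ when $\gamma^*=0$) puts the formula in the required form.

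For $(A,B)$ on a half-line $L\in\mathcal{L}$ — which consists of the sector-boundary lines together with the degeneracy lines $Az_2+Bz_4=0$ for each $(i,j,\epsilon)$ (on which $T$ coincides with the identity component of the corresponding $U_{i,j,\epsilon}$ and Proposition~\ref{prop:cyclic-subgroup-parameters} does not apply) — one repeats the above analysis with $(A,B)$ restricted to $L$. On a sector-boundary line the two adjacent formulas agree and either one suffices; on a degeneracy line, the offending triple contributes a constant $D$-value that either equals $D(U)$ (allowing $\kappa=0$) or exceeds it (in which case some non-degenerate triple realizes the overall minimum and the main analysis goes through). The main obstacle is organizational: tracking the residue-class constraints coming from Proposition~\ref{prop:cyclic-subgroup-parameters} and Lemma~\ref{lem:D-on-a-slice} for every $(i,j,\epsilon)$ and $\ell$ and ensuring that the modular and sector decompositions are simultaneously compatible. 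Taking $M'$ sufficiently divisible makes all of the residue classes well-defined at once.
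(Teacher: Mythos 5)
Your proposal follows the paper's overall strategy—reduce to $D(T)=\min_{i,j,\epsilon}D(T_{i,j,\epsilon})$, apply Proposition~\ref{prop:cyclic-subgroup-parameters} and Lemma~\ref{lem:D-on-a-slice} componentwise, then carve $\mathbb{R}_{\geq 0}\times\mathbb{R}$ into sectors via pairwise comparisons. However, you have misidentified the exceptional set $\mathcal{L}$, and this is a genuine gap.

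You take $\mathcal{L}$ to consist of the sector-boundary lines (the loci where two comparison forms agree) together with the degeneracy lines $Az_2+Bz_4=0$. Both families are lines through the origin. But a line through the origin contains at most one point of $\mathcal{T}$ (since coprime pairs are exactly the primitive lattice points on each ray), so all of these lines are swallowed by the hypothesis that $A^2+B^2$ is sufficiently large; they contribute nothing and do not need to be carried as an exceptional set at all. The \emph{actual} exceptional half-lines arise from a different source that your argument overlooks: Lemma~\ref{lem:D-on-a-slice} only gives the formula $\min_t f_\ell(t)=m_\ell+\gamma_\ell/q$ for $q$ \emph{sufficiently large}. Since $q=EA+FB$ for some fixed rationals $E,F$, the condition ``$q<q_0$'' cuts out a finite union of lines $EA+FB=c$ (for $0\leq c<q_0$) that do \emph{not} pass through the origin and hence may contain infinitely many points of $\mathcal{T}$. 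These are the half-lines that must be collected into $\mathcal{L}$ and treated separately, and your proof never introduces them.

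Your qualitative description of what goes wrong on an exceptional half-line—some quadruple contributes a constant $D$-value, either equal to $D(U)$ (giving $\kappa=0$) or strictly larger (so the minimum is realized by a non-constant quadruple)—is essentially the right picture, but you apply it to the wrong lines (the degeneracy lines, on which $q=0$ and which are anyway harmless). To repair the argument you would need to: (i) observe that the sector and degeneracy boundaries may be discarded outright via the large-$A^2+B^2$ hypothesis; (ii) define $\mathcal{L}$ to be the union over all quadruples and sectors of the lines $\{E_{i,j,\epsilon,\sigma,\ell}A+F_{i,j,\epsilon,\sigma,\ell}B=c:0\leq c<q_0\}$; and (iii) on each such half-line, redo the minimization using only the quadruples whose $q$-value tends to infinity along the line, verifying (as the paper does via the $\vol(T)\to\infty$ and $D(T)\to D(U)$ results from \cite{vikram}) that this set of quadruples is nonempty.
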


%We remark that for each $f_\ell$ there are at most two coprime pairs $(A,B)$ satisfying $f_\ell(A,B)=0$.  In particular, if $\sqrt{A^2+B^2}$ sufficiently large, then every $f_\ell(A,B)$ is nonzero, and so the triples $(\aleph, \beth, \vec{\delta})$ really do induce a partition of the set of all large pairs $(A,B)$.  (Of course, some triples $(\aleph, \beth, \vec{\delta})$ may index empty parts of the partition.)

\begin{proof}
Let $M''$ be the lcm of all of the values of $M=M_{i,j,\epsilon}$ output by Proposition~\ref{prop:cyclic-subgroup-parameters} as we range over indices $1 \leq i<j \leq n$ and signs $\epsilon\in \{+,-\}$.  For the remainder of the proof, assume that the residues of $A,B$ modulo $M''$ are fixed.  Notice that this choice fixes the residues of $A,B$ modulo each individual $M$ output by Proposition~\ref{prop:cyclic-subgroup-parameters}.  Our partition of $\mathbb{R}_{\geq 0} \times \mathbb{R}$ into sectors will be the least common refinement of partitions $\mathcal{P}_{i,j,\epsilon}$ and $\mathcal{P}_{i,j,\epsilon,\ell; i',j',\epsilon',\ell'}$ defined later in the proof.  Each of these partitions will have a finite number of sectors, so their refinement will also have a finite number of sectors.  Since $\mathcal{T}$ contains at most one point on each line through the origin, the intersection of $\mathcal{T}$ with the boundary lines defining our sectors will be finite.  By taking $\sqrt{A^2+B^2}$ sufficiently large, we can ensure that $(A,B)$ never lies on the boundary of a sector, and we will tacitly use this assumption in what follows.

Consider now a fixed choice of indices $1 \leq i<j \leq n$ and a sign $\epsilon \in \{+,-\}$, and take $K,\psi,z_1,z_2,z_3,z_4$ (all depending on $i,j,\epsilon$) as in the statement of Proposition~\ref{prop:cyclic-subgroup-parameters}.  Let $\mathcal{P}_{i,j,\epsilon}$ be the partition of $\mathbb{R}_{\geq 0} \times \mathbb{R}$ into two sectors according to the sign of the expression $xz_2+yz_4$ (for $(x,y) \in \mathbb{R}_{\geq 0} \times \mathbb{R}$).  Fix one of the sectors $\sigma \in \mathcal{P}_{i,j,\epsilon}$, and assume that $(A,B) \in \sigma$.  Then Proposition~\ref{prop:cyclic-subgroup-parameters} supplies integers $0 \leq a<K$ and rationals $E,F$ (depending on $i,j,\epsilon,\sigma$) such that
$$\psi(T_{i,j,\epsilon})=\bigcup_{\ell=0}^{K-1} ((a\ell/(Kq)+\langle 1/q \rangle_\mathbb{Z})\times \{\ell/K\}),$$
where $q=EA+FB$.  Now write $U_{i,j,\epsilon}$ as the disjoint union
$$U_{i,j,\epsilon}=\cup_{\ell=0}^{K-1} U_{i,j,\epsilon,\ell},$$
such that
$$\psi(U_{i,j,\epsilon,\ell})=(\mathbb{R}/\mathbb{Z}) \times \{\ell/K\}$$
for each $0 \leq \ell <K$.  Also define
$$T_{i,j,\epsilon,\ell}:=T\cap U_{i,j,\epsilon,\ell}=T_{i,j,\epsilon}\cap U_{i,j,\epsilon,\ell},$$
and notice that $$\psi(T_{i,j,\epsilon,\ell})=(a\ell/(Kq)+\langle 1/q \rangle_\mathbb{Z})\times \{\ell/K\}.$$
Since the function $D$ is continuous and piecewise linear with finitely many pieces, so is its restriction to $U_{i,j,\epsilon,\ell}$.   It follows that the function $D_{i,j,\epsilon,\ell}: \mathbb{R}/\mathbb{Z} \to \mathbb{R}$ defined by $D_{i,j,\epsilon,\ell}(x):=(D \circ \psi^{-1})(x,\ell/K)$ satisfies the hypotheses of Lemma~\ref{lem:D-on-a-slice}; let $M,\gamma_Q$ (for $0 \leq Q<M$) be as in the conclusion of that lemma with $b=a\ell/K$.  The residue of $q$ modulo $M$ depends only on the residues of $A,B$ modulo the product of $M$ with the denominators of $E,F$; call this natural number $M_{i,j,\epsilon,\sigma,\ell}$.  Suppose (as we will ensure shortly) that the residue classes of $A,B$ modulo $M_{i,j,\epsilon,\sigma,\ell}$ are fixed.  This also fixes the residue class of $q$ modulo $M$, say, $q \equiv Q \pmod{M}$; write $\gamma$ for the $\gamma_Q$ from Lemma~\ref{lem:D-on-a-slice}.

Now Lemma~\ref{lem:D-on-a-slice} tells us that
$$D(T_{i,j,\epsilon,\ell})=D_{i,j,\epsilon,\ell}(a\ell/(Kq)+\langle 1/q \rangle_\mathbb{Z})=D(U_{i,j,\epsilon,\ell})+\gamma/q$$
whenever $q$ is sufficiently large, say, at least some $q_0=q_{0,i,j,\epsilon,\sigma,\ell}$.  Since $q$ is a rational linear combination of $A,B$, we see that the set of pairs $(A,B)$ corresponding to $q<q_0$ lies on a finite set $\mathcal{L}_{i,j,\epsilon,\sigma,\ell}$ of rational half-lines.\footnote{This set of pairs $(A,B)$ lies naturally on a finite set of rational lines.  Any such non-vertical line intersects $\mathbb{R}_{\geq 0} \times \mathbb{R}$ in a half-line, and any such vertical line can be arbitrarily broken into two half-lines.}  Suppose now that $(A,B)\notin \mathcal{L}_{i,j,\epsilon,\sigma,\ell}$.  By dividing through by $\gamma$ if $\gamma \neq 0$, we can write the previous centered equation as
\begin{equation}\label{eq:D-for-T_i,j,eps,ell}
D(T_{i,j,\epsilon,\ell})=D(U_{i,j,\epsilon,\ell})+\frac{\kappa}{E'A+F'B},
\end{equation}
where $\kappa \in \{0,1\}$ and $E',F'$ are still rational (and now depend on $\ell$ in addition to $i,j,\epsilon$).  
%An examination of the proof of Lemma~\ref{lem:D-on-a-slice} reveals that if $\kappa=0$, then in fact we have $D(T_{i,j,\epsilon,\ell})=D(U_{i,j,\epsilon,\ell})$ for all $A,B$ (even when $q$ is small \NK{(This is not quite true, but it generates only finitely many exceptional lines; on each of these, we can look at the progressions that we get from other choices of $i,j,\varepsilon,\ell$; now we have a line instead of a sector, but Lemma~\ref{lem:coprime} can be replaced by an explicit calculation of when the coordinates are coprime.}).  In the case where $\kappa=1$, the pairs $(A,B)$ corresponding to $q<q_0$ all lie on finitely many lines in $\mathbb{R}^2$; let $\mathcal{L}_{i,j,\epsilon,\ell}$ be the set of such exceptional lines, so that \eqref{eq:D-for-T_i,j,eps,ell} holds for all $(A,B)$ not lying on a line of $\mathcal{L}_{i,j,\epsilon,\ell}$.  Since each value of $q<q_0$ contributes at most one possible value of $D(T_{i,j,\epsilon,\ell})$ \NK{(this is not quite right; rather, each line through the origin contributes at most one $D$-value, and each line not through the origin may contribute a progression (or possibly several progressions, because of the thinning step at the end)}, 
%Note that there is some $\eta=\eta_{i,j,\epsilon\ell}>0$ such that $D(T_{i,j,\epsilon,\ell})>D(U_{i,j,\epsilon,\ell})+\eta$ whenever $(A,B)$ lies on a line of $\mathcal{L}_{i,j,\epsilon,\ell}$.

We pause to update a few parameters.  Let $\mathcal{L}$ be the union of the sets $\mathcal{L}_{i,j,\epsilon,\sigma,\ell}$ from the previous paragraph; for this paragraph and the two that follow, assume that $(A,B)$ does not lie on $\mathcal{L}$, so that \eqref{eq:D-for-T_i,j,eps,ell} holds for all $i,j,\epsilon,\ell$.  Let $M'$ be the lcm of $M''$ and the natural numbers $M_{i,j,\epsilon,\sigma,\ell}$ from the previous paragraph; for the remainder of the proof, assume that $(A,B) \equiv (\aleph,\beth) \pmod{M'}$ for fixed $0 \leq \aleph, \beth<M'$.  Let $\mathcal{P}'$ (depending on $\aleph, \beth$) be the least common refinement of the partitions $\mathcal{P}_{i,j,\epsilon}$ from the previous paragraph; for the remainder of the proof, assume that $(A,B)$ lies in a fixed sector $\sigma$ of $\mathcal{P}_{i,j,\epsilon}$.  We will later refine the partition $\mathcal{P}'$ further.  %Finally, let $\eta'>0$ (also depending on $\aleph, \beth$) be the minimum of the values of $\eta_{i,j,\epsilon,\ell}$ from the previous paragraph.

%It was shown in \NK{cite paper with Vikram, Section 3.4} that the volume of the torus $T=\langle Au+Bv \rangle_\mathbb{R}$ tends to infinity with $A^2+B^2$ and that therefore $T$ becomes $o(1)$-dense in $U$ as $A^2+B^2$ tends to infinity; in particular, $D(T)$ becomes arbitrarily close to $D(U)$.  
It is a basic fact (see, e.g., \cite[Proposition 2.1]{thesis}) that
$$D(T)=\min_{i,j,\epsilon} D(T_{i,j,\epsilon}).$$
Hence
$$
D(T)=\min_{i,j,\epsilon,\ell}D(T_{i,j,\epsilon,\ell}).$$
Let $Y$ be the set of quadruples $(i,j,\epsilon,\ell)$ such that $D(U_{i,j,\epsilon,\ell})=D(U)$, and note that
$$
D(T)=\min_{(i,j,\epsilon,\ell) \in Y}D(T_{i,j,\epsilon,\ell})$$
for $A^2+B^2$ sufficiently large since the right-hand side tends to $D(U)$ and each $(i,j,\epsilon,\ell) \notin Y$ has $D(T_{i,j,\epsilon,\ell})$ uniformly bounded above $D(U)$.  We first dispose of the special case in which there is some $(i,j,\epsilon,\ell) \in Y$ with $\kappa_{i,j,\epsilon,\ell}=0$.  If this occurs, then
$$D(T)=D(U)$$
for all $(A,B)$ under consideration, and we obtain the conclusion of the proposition with $\kappa:=0$ and (say) $E'=1, F'=0$.  It remains to consider the case where $\kappa_{i,j,\epsilon,\ell}=1$ for all $(i,j,\epsilon,\ell) \in Y$.  %By the observation in the first sentence of this paragraph, if $A^2+B^2$ is sufficiently large, then
%\begin{equation}\label{eq:min-over-Y}
%D(T)=\min_{(i,j,\epsilon,\ell) \in Y}D(T_{i,j,\epsilon,\ell});
%\end{equation}
%for the remainder of the proof, assume that this is the case.

For each pair of  distinct tuples $(i,j,\epsilon,\ell),(i',j',\epsilon',\ell') \in Y$, let us compare the expressions in \eqref{eq:D-for-T_i,j,eps,ell} appearing for these two tuples.
%If these two expressions are equal, then let $\mathcal{P}_{i,j,\epsilon,\ell; i',j',\epsilon',\ell'}$ be the trivial partition of $\mathbb{R}_{\geq 0} \times \mathbb{R}$ into a single part.  Now suppose the two expressions are not equal.  
There are rationals $E_1,E_2,F_1,F_2$ such that
$$D(T_{i,j,\epsilon,\ell})=D(U)+\frac{1}{E_1A+F_1B} \quad \text{and} \quad D(T_{i',j',\epsilon',\ell'})=D(U)+\frac{1}{E_2A+F_2B}.$$
Which of these two expressions is smaller is determined by the sign of the expression
$$(E_2A+F_2B)-(E_1A+F_1B)=(E_2-E_1)A+(F_2-F_1)B.$$
Hence there is a partition $\mathcal{P}_{i,j,\epsilon,\ell; i',j',\epsilon',\ell'}$ of $\mathbb{R}_{\geq 0} \times \mathbb{R}$ into at most two sectors such that on the each sector, the above expression is either always non-negative or always non-positive.  Let $\mathcal{P}$ be the least common refinement of $\mathcal{P}'$ (from above) and the partitions $\mathcal{P}_{i,j,\epsilon,\ell; i',j',\epsilon',\ell'}$.  Hence each sector $\sigma$ of $\mathcal{P}$ determines a tuple $(i,j,\epsilon,\ell)$, with its associated rationals $E',F'$, such that
\begin{equation*}\label{eq:D-final-formula}
D(T)=D(T_{i,j,\epsilon,\ell})=D(U)+\frac{1}{E'A+F'B}
\end{equation*}
whenever $(A,B) \in (\mathcal{T} \setminus \mathcal{L}) \cap \sigma$.  This establishes the first conclusion of the proposition.

It remains to analyze what happens when $(A,B)$ lies on $\mathcal{L}$.  Fix a half-line $L \in \mathcal{L}$, and consider pairs $(A,B) \in L$.  If $L \cap \mathcal{T}$ is finite, then there is nothing to show, so assume that this intersection is infinite.  Let $Y'=Y'_{\aleph,\beth,L}$ denote the set of all quadruples $(i,j,\epsilon,\ell) \in Y$ whose corresponding values of $q$ tend to infinity as $(A,B)$ tends to infinity along $L$.  We claim that $Y'$ is nonempty.  It was shown in \cite{vikram} that the volume of the torus $T=\langle Au+Bv \rangle_\mathbb{R}$ tends to infinity with $A^2+B^2$ and that therefore $T$ becomes $o(1)$-dense in $U$ as $A^2+B^2$ tends to infinity; in particular, $D(T)$ tends to $D(U)$.  Since each quadruple $(i,j,\epsilon,\ell) \notin Y'$ has $D(T_{i,j,\epsilon,\ell})$ uniformly bounded away from $D(U)$ from above, we see that $Y'$ is nonempty, as claimed.  Now we repeat the argument from the previous two paragraphs, with $Y$ replaced by $Y'$, and we note that, sufficiently far from the origin, the half-line $L$ lies entirely in a single sector of $\mathcal{P}$.  This establishes the second conclusion of the proposition.
\end{proof}

\subsection{Coprime pairs in arithmetic progressions}\label{sec:coprime}

The third stage of our preparatory lemmas concerns when long arithmetic progressions in $\mathbb{Z}^2$ are guaranteed to contain some points with coprime coordinates.  The first lemma will help us find coprime pairs $(A,B)$ on intersections of rational lines with the sectors from Proposition~\ref{prop:D(T_i,j,eps)-formula}.

\begin{lemma}\label{lem:coprime}
Let $C \geq 0$ be a real number, and let $a_1,a_2$ be nonzero integers.  For $N$ a parameter, let $b_1,b_2$ be integers with $|b_1|,|b_2| \leq CN$, and let $I$ be an interval (of integers) of length $N$.  Suppose that $\gcd(a_1,a_2,b_1,b_2)=1$ and $a_1b_2 \neq a_2b_1$.  Then $$|\{x \in I: \gcd(a_1x+b_1,a_2x+b_2)=1\}|$$ tends to infinity with $N$ (uniformly in $b_1,b_2, I$).
%If $N$ is sufficiently large (in terms of $a_1,a_2,C$), then there is some $x \in I$ such that $\gcd(a_1x+b_1,a_2x+b_2)=1$. \NK{(re-phrase so that we get many $x$'s, not just one)} \VJ{To deal with points removed on exceptional lines}
\end{lemma}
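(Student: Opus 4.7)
The plan is to use Möbius inversion together with elementary sieve estimates. Set $D := a_1 b_2 - a_2 b_1$, which by hypothesis is a nonzero integer satisfying $|D| \leq (|a_1|+|a_2|)CN$; the identity $a_2(a_1x+b_1)-a_1(a_2x+b_2) = -D$ shows that $\gcd(a_1x+b_1, a_2x+b_2)$ always divides $D$.

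First, I would identify the set $\mathcal{P}$ of primes $p$ for which there exists some $x \in \mathbb{Z}$ with $p \mid a_1x+b_1$ and $p \mid a_2x+b_2$. A short case analysis (on whether $p$ divides each of $a_1, a_2$) shows that the hypothesis $\gcd(a_1, a_2, b_1, b_2) = 1$ rules out primes dividing both $a_1$ and $a_2$; that every $p \in \mathcal{P}$ must divide $a_1 a_2 D$; and that for each $p \in \mathcal{P}$, the solution set in $x$ is a single residue class $r_p \pmod{p}$. By the Chinese Remainder Theorem, for each squarefree $d$ whose prime factors all lie in $\mathcal{P}$, the corresponding set $\{x : d \mid a_1x+b_1 \text{ and } d \mid a_2x+b_2\}$ is a single residue class modulo $d$, so its intersection with $I$ has size $N/d + O(1)$ (with an absolute implicit constant).

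Möbius inversion then gives
\begin{align*}
|\{x \in I : \gcd(a_1x+b_1, a_2x+b_2) = 1\}|
&= \sum_{\substack{d \text{ squarefree}\\ p \mid d \Rightarrow p \in \mathcal{P}}} \mu(d) \left(\frac{N}{d} + O(1)\right) \\
&= N \prod_{p \in \mathcal{P}} \left(1 - \frac{1}{p}\right) + O\bigl(2^{|\mathcal{P}|}\bigr).
\end{align*}
Write $D^* := \prod_{p \in \mathcal{P}} p$, which divides the radical of $a_1 a_2 D$ and hence satisfies $D^* = O(N)$. Using $\prod_{p \in \mathcal{P}}(1-1/p) = \phi(D^*)/D^*$ together with the classical Mertens-type bound $\phi(n)/n \gg 1/\log\log n$, the main term is at least $c_1 N/\log\log N$, where $c_1 > 0$ depends only on $a_1, a_2, C$. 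For the error, $|\mathcal{P}| \leq \omega(a_1 a_2 D) = O(\log N/\log\log N)$, so $2^{|\mathcal{P}|} \leq N^{o(1)}$. Together,
\[
|\{x \in I : \gcd(a_1x+b_1, a_2x+b_2) = 1\}| \geq \frac{c_1 N}{\log\log N} - N^{o(1)} \to \infty,
\]
uniformly in $b_1, b_2, I$.

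The main obstacle (and also the key saving) is that $|\mathcal{P}|$ is not bounded: it can grow with $N$ as $b_1, b_2$ vary within the allowed range, so one cannot simply apply inclusion-exclusion over a fixed finite set of primes and take the density as $N \to \infty$. The decisive point is that $|\mathcal{P}|$ grows only as $O(\log N/\log\log N)$, which keeps $2^{|\mathcal{P}|}$ subpolynomial in $N$, while Mertens' bound guarantees the main term is genuinely of order $N/\log\log N$.
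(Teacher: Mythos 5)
Your proof is correct and is essentially the same argument as the paper's: both observe that any common prime divisor of $a_1x+b_1, a_2x+b_2$ must divide $a_1b_2-a_2b_1 = O(N)$, sieve by Möbius over the relevant squarefree divisors to obtain a main term of order $N\cdot\varphi(\cdot)/(\cdot) \gg N/\log\log N$, and control the error by a subpolynomial divisor-count bound. The only cosmetic difference is that the paper first reduces via CRT to counting integers in a shifted interval coprime to $Z=|a_1b_2-a_2b_1|$, whereas you sieve directly over the set $\mathcal{P}$; these are the same idea in slightly different packaging.
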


\begin{proof}
Note that $\gcd(a_1x+b_1,a_2x+b_2)$ divides
$$Z:=|a_1(a_2x+b_2)-a_2(a_1x+b_1)|=|a_1b_2-a_2b_1|,$$
which by assumption is nonzero and has size $Z=O(N)$.  Hence, we wish to count $x \in I$ such that $\gcd(a_1x+b_1,a_2x+b_2)$ is coprime to $Z$.  Let $p$ be a prime divisor of $Z$.  The forms $a_1x+b_1,a_2x+b_2$ cannot both be identically zero modulo $p$ by the assumption that $\gcd(a_1,a_2,b_1,b_2)=1$, so there is at most a single residue class $x_p$ modulo $p$ where both forms vanish.  If there is no such residue class, then pick $x_p$ arbitrarily.

By the Chinese Remainder Theorem, there is some integer $R$ such that $R \equiv x_p \pmod{p}$ for each prime $p$ dividing $Z$, and we have $\gcd(a_1x+b_1,a_2x+b_2)=1$ whenever $x-R$ is coprime to $Z$.  So it suffices to lower-bound the number of elements of $J:=I-R$ that are coprime to $Z$.  Let $\mu$ denote the M\"obius function, and recall that $\sum_{d|m}\mu(d)$ equals $1$ if $m=1$ and equals $0$ if $m>1$.  Using this identity, we compute

%Suppose $p$ is a prime dividing $\gcd(a_1x+b_1,a_2x+b_2)$ for some $x \in I$.  Then $p$ also divides
%$$a_1(a_2x+b_2)-a_2(a_1x+b_1)=a_1b_2-a_2b_1.$$
%In this case, one of the forms $a_1x+b_1, a_2x+b_2$ is a multiple of the other modulo $p$, and so (since $\gcd(a_1,a_2,b_1,b_2)=1$) there is a unique residue class $x_p$ modulo $p$ such that $p$ divides $a_1x+b_1,a_2x+b_2$ if and only if $x \equiv x_p \pmod{p}$.  Let $Z:=|a_1b_2-a_2b_1|$.  Notice that $Z \neq 0$ by assumption and that $Z=O(N)$.  (Here and in the remainder of the proof all implicit constants are allowed to depend on $a_1,a_2,C$.)

%By the Chinese Remainder Theorem, there is some integer $L$ such that for each $p$ dividing $Z$ we have $x_p \equiv L \pmod{p}$.  Hence we wish to show that the shifted interval $I-L$ contains some $x$ that is coprime with $Z$.  To do this, we will show that the set of integers coprime with $Z$ has no long gaps.  Let $J$ be an interval of integers of length $N$.  Let $\mu$ denote the M\"obius function.  Recall that $\sum_{d|m}\mu(d)$ equals $1$ if $m=1$ and equals $0$ if $m>1$.  Using this identity, we can write
\begin{align*}
|\{x \in J: \gcd(x,Z)=1\}| &=\sum_{x \in J} \mathbbm{1}_{\gcd(x,Z)=1}\\
 &=\sum_{x \in J} \sum_{d|x,Z} \mu(d)\\
 &=\sum_{d|Z} \mu(d) \cdot |\{x \in J: d|x\}|\\
 &=\sum_{d|Z} \mu(d) (|J|/d+O(1))\\
 &=(\varphi(Z)/Z)N+O(\tau(Z)),
\end{align*}
where $\varphi$ is Euler's totient function and $\tau$ is the divisor function.  Recalling that $Z=O(N)$, we apply the standard bounds $\varphi(Z)/Z \gg 1/\log\log(Z)$ and $\tau(Z) \ll e^{O(\log(Z)/\log\log(Z))}$ to find that
$$|\{x \in J: \gcd(x,Z)=1\}|\gg N/\log\log(N),$$
which certainly tends to infinity with $N$.
\end{proof}

%Notice that if $b_1$ is also fixed (independent of $N$), then for $b_2$ sufficiently large we always have $a_1b_2 \neq a_2b_1$.  So in this case we have $|\{x \in I: \gcd(a_1x+b_1,a_2x+b_2)=1\}|=0$
%if and only if $b_2$ is not coprime with $\gcd(a_1,a_2,b_1)$.  The latter condition depends only on the residue of $b_2$ modulo $\gcd(a_1,a_2,b_1)$.  We record this observation in the following lemma.

%\begin{lemma}\label{lem:coprime-output}
%Let $C \geq 1/2$ be a real number, let $a_1,a_2$ be nonzero integers, and let $b_1$ be an integer.  For $N$ a parameter, let $b_2$ be an integer with $|b_2| \leq CN$, and let $I$ be an interval of length $N$ in $[-CN,CN]$.  If $N$ is sufficiently large, then there exists some $x \in I$ with $\gcd(a_1x+b_1,a_2x+b_2)=1$ if and only if $b_2$ is not invertible modulo $\gcd(a_1,a_2,b_1)$.
%\end{lemma}

The next lemma will help us characterize the coprime pairs $(A,B)$ on the exceptional half-lines from Proposition~\ref{prop:D(T_i,j,eps)-formula}.

\begin{lemma}\label{lem:coprime-on-a-line}
Let $a_1, b_1,a_2,b_2$ be integers with $a_1>0$.  Then there is a natural number $M$ such that for $x$ a sufficiently large natural number, whether or not $\gcd(a_1x+b_1,a_2x+b_2)=1$ depends only on the residue of $x$ modulo $M$.
\end{lemma}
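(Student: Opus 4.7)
The plan is to pivot off the elementary identity
\[
a_1(a_2 x + b_2) - a_2(a_1 x + b_1) = a_1 b_2 - a_2 b_1 =: Z,
\]
which forces $d(x) := \gcd(a_1 x + b_1, a_2 x + b_2)$ to divide $Z$ for every $x$. This is the same observation used in the proof of Lemma~\ref{lem:coprime}, and it splits the analysis naturally into the cases $Z \ne 0$ and $Z = 0$.

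In the non-degenerate case $Z \neq 0$, I would argue that $d(x) = 1$ if and only if no prime $p \mid Z$ divides both $a_1 x + b_1$ and $a_2 x + b_2$. For any fixed such prime $p$, each divisibility $p \mid a_i x + b_i$ is determined entirely by the residue of $x$ modulo $p$: either $p \nmid a_i$ (in which case there is a single bad residue class), or $p \mid a_i$ (in which case the condition reduces to whether $p \mid b_i$, independently of $x$). Consequently the coprimality of the pair is determined by the residue of $x$ modulo $M := \prod_{p \mid Z} p$, with no largeness hypothesis needed.

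The degenerate case $Z = 0$ I would handle by factoring out a common linear factor. Setting $g := \gcd(a_1, |a_2|) > 0$ and writing $a_1 = g \alpha$, $a_2 = \pm g \beta$ with $\gcd(\alpha, \beta) = 1$, the relation $a_1 b_2 = a_2 b_1$ forces $b_1 = \alpha c$ and $b_2 = \pm \beta c$ for a common integer $c$, so that $a_1 x + b_1 = \alpha(gx + c)$ and $a_2 x + b_2 = \pm \beta(gx + c)$; this yields $d(x) = |gx + c|$, which exceeds $1$ for every sufficiently large $x$. The conclusion then holds trivially with $M = 1$ and the answer being constantly ``not coprime.'' The subcase $a_2 = 0$, which forces $b_2 = 0$ (since $a_1 > 0$), is handled directly: $d(x) = |a_1 x + b_1|$ is at most $1$ for only finitely many $x$.

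I expect the only mild subtlety to be the bookkeeping in the $Z = 0$ case (sign conventions and the genuinely degenerate subcase $a_2 = 0$); the main case $Z \ne 0$ is essentially immediate from the observation that the residue of $a_i x + b_i$ modulo any fixed prime $p$ is periodic in $x$ with period dividing $p$.
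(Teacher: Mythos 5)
Your proof is correct and follows essentially the same strategy as the paper's: both observe that $\gcd(a_1x+b_1,a_2x+b_2)$ divides $Z = a_1 b_2 - a_2 b_1$ and split on whether $Z$ vanishes, with the nondegenerate case handled by periodicity modulo (a divisor of) $Z$ and the degenerate case by factoring out a common linear term that grows without bound. Your implementation is marginally cleaner---the modulus $\prod_{p \mid Z} p$ is sharper than the paper's $|Z|$, and your factorization via $g = \gcd(a_1,|a_2|)$ sidesteps the division-by-zero in the paper's $a_i/\gcd(a_i,b_i)$ when $a_2=b_2=0$---but these are cosmetic differences, not a different argument.
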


\begin{proof}
As in the proof of the previous lemma, note that $\gcd(a_1x+b_1,a_2x+b_2)$ divides
$$a_1(a_2x+b_2)-a_2(a_1x+b_1)=a_1b_2-a_2b_1.$$
Suppose first that $a_1b_2-a_2b_1=0$.  For each $i \in \{1,2\}$, let
$$a_i':=a_i/\gcd(a_i,b_i) \quad \text{and} \quad b_i':=b_1/\gcd(a_i,b_i),$$
so that $\gcd(a_i',b_i')=1$ and we have
$$a_ix+b_i=\gcd(a_i,b_i)(a_i'x+b_i').$$
Now $a_1 b_2=a_2 b_1$ implies that $a_1'b_2'=a'_2b'_1$, and from $\gcd(a_1',b_1')=\gcd(a_2',b_2')=1$ we conclude that $(a'_1,b'_1)=\pm (a'_2,b'_2)$.  It follows that $a_1x+b_1,a_2x+b_2$ are both integer multiples of $a_1'x+b_1'$.  For sufficiently large $x$, we have $a_1'x+b_1'>1$ and hence $\gcd(a_1x+b_1,a_2x+b_2) \geq a_1'x+b_1' \neq 1$, so the conclusion of the lemma holds with $M=1$ (say).

Now, suppose that $a_1b_2-a_2b_1 \neq 0$.  Then $\gcd(a_1x+b_1,a_2x+b_2)$ depends only on the residue of $x$ modulo $M:=|a_1b_2-a_2b_1|$, as desired.
\end{proof}

We are finally ready to prove Theorem~\ref{thm:relative-spectrum-improved}.

\begin{proof}[Proof of Theorem~\ref{thm:relative-spectrum-improved}]
If $U$ is contained in some subspace of the form $\{x_i=\epsilon x_j\}$, then the projection $\pi(U)$ onto all but the $i$-th coordinate clearly satisfies
$$\mathcal{S}_1(U)=\mathcal{S}_1(\pi(U)),$$
and we can apply induction on $n$.  Now assume that $U$ is not contained in any subspace of the form $\{x_i=\epsilon x_j\}$, and apply Proposition~\ref{prop:D(T_i,j,eps)-formula}.  In the language of Proposition~\ref{prop:D(T_i,j,eps)-formula}, fix a choice of residues $0 \leq \aleph, \beth<M'$, and assume that $(A,B) \in \mathcal{T}$ has $A^2+B^2$ sufficiently large and satisfies $A \equiv \aleph \pmod{M'}$ and $B \equiv \beth \pmod{M'}$.

We start with the pairs $(A,B)$ not lying on $\mathcal{L}$: Fix  a sector $\sigma$ from Proposition~\ref{prop:D(T_i,j,eps)-formula}, and assume that $(A,B) \in (\mathcal{T} \setminus \mathcal{L}) \cap \sigma$.  For such $(A,B)$, there are $\kappa \in \{0,1\}$ and rationals $E',F'$ such that
$$D(T)=D(U)+\frac{\kappa}{E'A+F'B}.$$
If $\kappa=0$, then $D(T)=D(U)$ contributes only a single value to $\mathcal{S}_1(U)$.  Now suppose that $\kappa=1$.  Write $E'=y/w$ and $F'=z/w$ where $y,z$ are coprime integers and $w$ is a rational number, so that
$$\frac{1}{E'A+F'B}=\frac{w}{yA+zB}.$$
Notice that $yA+zB \equiv y\aleph+z\beth \pmod{M}$ for all $(A,B)$ under consideration.

For $s'$ an integer, define the level set
$$V(s'):=\{(A',B') \in ((\aleph+M'\mathbb{Z}) \times (\beth+M'\mathbb{Z})) \cap \sigma: yA'+zB'=s'M'+y\aleph+z\beth\}.$$
We have not required $A',B'$ to be coprime in the definition of $V(s')$; the expression $yA+zB$ assumes the value $s'M'+y\aleph+z\beth$ if and only if $V(s')$ contains a point of $\mathcal{T} \setminus \mathcal{L}$.  For $s'$ sufficiently large, the set $V(s')$ intersects each half-line of $\mathcal{L}$ in at most one point, so $V(s')$ contains a point of $\mathcal{T} \setminus \mathcal{L}$ whenever $V(s')$ contains a sufficiently large number of points with coprime coordinates.  Since $y,z$ are coprime, there are natural numbers $b'_1,b'_2$ such that $yb'_1+zb'_2=1$.  Hence $$y(s'M'b'_1+\aleph)+z(s'M'b'_2+\beth)=s'M'+y\aleph+z\beth,$$
and $V(s')$ consists of the points
$$(zM'x+s'M'b'_1+\aleph,-yM'x+s'M'b'_2+\beth)$$
for $x$ in an appropriate interval $I(s')$ of integers.  See Figure~\ref{fig:level-sets} for an illustration.

\begin{figure}
    \centering
    \begin{tikzpicture}[scale=0.55]
    \def\circlesize{3pt}
    \fill[gray!6] (0,0) -- (9.5, 9.5/3) -- (9.5, 11.5) --(5.75,11.5) -- cycle;

    \foreach \x/\y in { 1/1, 1/2, 2/1, 2/3, 3/1, 3/2, 3/4, 3/5, 4/3, 4/5, 4/7, 5/2, 5/3, 5/4, 5/6, 5/7, 5/8, 5/9, 6/5, 6/7, 6/11, 7/3, 7/4, 7/5, 7/6, 7/8, 7/9, 7/10, 7/11, 8/3, 8/5, 8/7, 8/9, 8/11, 9/4, 9/5, 9/7, 9/8, 9/10, 9/11} {
        \fill[black] (\x, \y) circle (\circlesize);
    }

        \foreach \x/\y in {2/2, 2/4, 3/3, 3/6, 4/2, 4/4, 4/6, 4/8, 5/5, 5/10, 6/2, 6/3, 6/4, 6/6, 6/8, 6/9, 6/10, 7/7, 8/4, 8/6, 8/8, 8/10, 9/3, 9/6, 9/9} {
        \draw[black] (\x, \y) circle (\circlesize);
    }

    \draw[thick,->] (-1, 0) -- (10, 0) node[right] {$A$};
    \draw[thick,->] (0, -1) -- (0, 11.5) node[above] {$B$};
    \draw[thick] (0, -0) -- (5.75, 11.5);
    \draw[thick] (0, -0) -- (9.5, 9.5/3);
    \draw[gray, thick] (1/3,2/3)--(3/4,1/4);
    \draw[gray, thick] (2/3,4/3)--(6/4,2/4);
    \draw[gray, thick] (1,2)--(9/4,3/4);
    \draw[gray, thick] (4/3,8/3)--(12/4,1);
    \draw[gray, thick] (5/3,10/3)--(15/4,5/4);
    \draw[gray, thick] (2,4)--(18/4,6/4);
    \draw[gray, thick] (7/3,14/3)--(21/4,7/4);
    \draw[gray, thick] (8/3,16/3)--(6,2);
    \draw[gray, thick] (9/3,18/3)--(27/4,9/4);
    \draw[gray, thick] (10/3,20/3)--(30/4,10/4);
    \draw[gray, thick] (11/3,22/3)--(33/4,11/4);
    \draw[gray, thick] (12/3,24/3)--(36/4,12/4);
    \draw[gray, thick] (13/3,26/3)--(9.5,3.5);
    \draw[gray, thick] (14/3,28/3)--(9.5,4.5);
    \draw[gray, thick] (15/3,30/3)--(9.5,5.5);
    \draw[gray, thick] (16/3,32/3)--(9.5,6.5);
    \draw[gray, thick] (17/3,34/3)--(9.5,7.5);
    \draw[gray, thick] (6.5,11.5)--(9.5,8.5);
    \draw[gray, thick] (7.5,11.5)--(9.5,9.5);
    \draw[gray, thick] (8.5,11.5)--(9.5,10.5);
    
\end{tikzpicture}
\caption{This figure illustrates how level sets intersect a sector.  Here, the sector $\sigma=\{x/3 \leq y \leq 2x\}$ is shaded.  Filled-in circles indicate elements of $\mathcal{T} \cap \sigma$ (i.e., points with coprime coordinates), and outlined circles indicate other lattice points in $\sigma$.  The gray diagonal lines with slope $-1$ indicate the various level sets $V(s')$ of the expression $A+B$ (where there are no modular constraints placed on $A,B$).}
    \label{fig:level-sets}
\end{figure}

Notice that $I(s')$ is empty if $s'M'+y\aleph+x\beth<0$.  The argument from the last paragraph of the proof of Proposition~\ref{prop:D(T_i,j,eps)-formula} shows that the quantity $E'A+F'B$ tends to infinity as $A^2+B^2$ tends to infinity.  In particular, each $I(s')$ is finite, and the length of $I(s')$ grows linearly with $s'$ as $s'$ tends to infinity.  We wish to apply Lemma~\ref{lem:coprime} with the parameters
$$I:=I(s'), \quad a_1:=zM', \quad a_2:=-yM', \quad b_1:=s'M'b'_1+\aleph, \quad b_2:=s'M'b'_2+\beth;$$
the parameter $N:=|I|$ grows linearly with $s'$ (as observed above), and the constant $C$ in the statement of Lemma~\ref{lem:coprime} can be chosen appropriately (depending on $M',y,z,b'_1,b'_2$).  We check that
$$a_1b_2-a_2b_1=(M')^2 s'+M'(y\aleph+z\beth)$$
is nonzero for $s'$ sufficiently large.  It remains to check whether or not $\gcd(a_1,a_2,b_1,b_2)=1$.  It is always the case that $\gcd(a_1,a_2,b_1,b_2)$ divides $\gcd(a_1,a_2)=M'$.  In particular, whether or not $\gcd(a_1,a_2,b_1,b_2)=1$ depends only on the residue class of $s'$ modulo $M'$.  Fix some $0 \leq r<M'$, and suppose that $s'=M's+r$ for $s$ an integer.  If $r$ is such that $\gcd(a_1,a_2,b_1,b_2)>1$, then $V(s')$ does not contain any points with coprime coordinates.  Suppose instead that $r$ is such that $\gcd(a_1,a_2,b_1,b_2)=1$, in which case Lemma~\ref{lem:coprime} ensures that the number of points of $V(s')$ with coprime coordinates tends to infinity with $s'$.  By the discussion in the previous paragraph, this ensures that $D(T)$ assumes all of the values
$$D(U)+\frac{w}{(M')^2 s+M'r+y\aleph +z\beth}$$
for $s$ sufficiently large.  Setting $\alpha:=(M')^2/w$ and $\beta:=(M'r+y\aleph +z\beth)/w$, we find that $D(T)$ assumes all but finitely many elements of the set
$$D(U)+\frac{1}{\Prog(\alpha,\beta)}.$$
We obtain only finitely many such progressions because there were finitely many choices of $\aleph,\beth, \sigma,r$.

It remains to treat the exceptional half-lines in $\mathcal{L}$.  Fix a half-line $L \in \mathcal{L}$ from Proposition~\ref{prop:D(T_i,j,eps)-formula}, and consider pairs $(A,B) \in \mathcal{T} \cap L$, still with $A^2+B^2$ sufficiently large and with $A \equiv \aleph \pmod{M'}$ and $B \equiv \beth \pmod{M'}$.  As above, Proposition~\ref{prop:D(T_i,j,eps)-formula} provides $\kappa \in \{0,1\}$ and rationals $E',F'$ such that
$$D(T)=D(U)+\frac{\kappa}{E'A+F'B}$$
for such $(A,B)$.  We again restrict our attention to the case $\kappa=1$ and clear denominators, after which we can immediately apply Lemma~\ref{lem:coprime-on-a-line} instead of Lemma~\ref{lem:coprime}; this leads, as before, to finitely many progressions of the form $D(U)+1/\Prog(\alpha,\beta)$.  This completes the proof of Theorem~\ref{thm:relative-spectrum-improved}.
\end{proof}

\subsection{Remarks on the proof}\label{sec:remarks}
We now briefly mention a few further pieces of information that can be extracted from the proof of Theorem~\ref{thm:relative-spectrum-improved}; some of this additional structure will streamline the calculations in the remaining sections of the paper.

\subsubsection{Sectors and exceptional half-lines}\label{sec:sectors-and-lines}
In the proof above, the relative spectrum $\mathcal{S}_1(U)$ is obtained as the union of contributions from sectors and contributions from exceptional half-lines.  One can show (as follows) that in fact the progressions in $\mathcal{S}_1(U)$ come either entirely from sectors or entirely from exceptional half-lines, according to whether $U$ achieves its $D$-value at finitely or infinitely many points.  This observation significantly shortens calculations in concrete examples.

First, suppose that $U$ achieves its $D$-value at infinitely many points.  (This is always the case, for instance, when $U$ is of the form $U=U' \times (\mathbb{R}/\mathbb{Z})$.)  Then there is some quadruple $(i,j,\epsilon,\ell)$ such that $D(U_{i,j,\epsilon,\ell})=D(U)$ and $D_{i,j,\epsilon,\ell}$ is equal to $D(U)$ on an interval of positive length.  It follows that $D(T)=D(U)$ except when $(A,B)$ lies on one of the exceptional half-lines for the quadruple $(i,j,\epsilon,\ell)$, so the entire relative spectrum $\mathcal{S}_1(U)$ comes from these half-lines.  In particular, we do not have to consider contributions from the various sectors.  For an example, see Section~\ref{sec:s1(4)-first-example}.

Second, suppose that $U$ achieves its $D$-value at only finitely many points.  We claim that the contribution to $\mathcal{S}_1(U)$ from exceptional half-lines is already contained in the contribution from sectors and hence can be ignored.  It suffices to show that the first conclusion of Proposition~\ref{prop:D(T_i,j,eps)-formula} continues to hold on exceptional half-lines.  Fix a choice of residues of $A,B$ modulo $M'$ as in the proof of that proposition.  Let $L$ be an exceptional half-line, and let $Y''$ denote the set of quadruples $(i,j,\epsilon,\ell)$ for which $L$ is exceptional; these are precisely the quadruples for which the corresponding value $q_{i,j,\epsilon,\ell}=|T_{i,j,\epsilon,\ell}|$ is constant on $L$.  It follows that for each $(i,j,\epsilon,\ell) \in Y''$, the quantity $D(T_{i,j,\epsilon,\ell})$ is equal to some constant, say, $D(T_{i,j,\epsilon,\ell})=D(U)+\eta_{i,j,\epsilon,\ell}$, for all $(A,B) \in L$.  If $\eta_{i,j,\epsilon,\ell}=0$ for some $(i,j,\epsilon,\ell) \in Y''$, then $D(T)=D(U)$ and $L$ contributes only the single value $D(U)$ to $\mathcal{S}_1(U)$.

Now consider the case where $\eta_{i,j,\epsilon,\ell}>0$ for every $(i,j,\epsilon,\ell) \in Y''$.  For $(i,j,\epsilon,\ell) \in Y''$, the parameter $q=q_{i,j,\epsilon,\ell}$ (which is constant) may be too small for the formula \eqref{eq:D-for-T_i,j,eps,ell} to hold; in particular, the formula \eqref{eq:D-for-T_i,j,eps,ell} may erroneously predict that $D(T_{i,j,\epsilon,\ell})=D(U)+\eta'_{i,j,\epsilon,\ell}$ for some $\eta_{i,j,\epsilon,\ell}'>0$ that is different from $\eta_{i,j,\epsilon,\ell}$.  We know (see \cite{vikram}) that $D(T) \to D
(U)$ as $A^2+B^2 \to \infty$.  Thus, for $A^2+B^2$ sufficiently large, the first minimum over $Y$ in the proof of Proposition~\ref{prop:D(T_i,j,eps)-formula} must be achieved by some quadruple $(i,j,\epsilon,\ell) \in Y$ with $D(T_{i,j,\epsilon,\ell}) \to D(U)$.  In particular, this minimum is achieved on $Y \setminus Y''$, and it makes no difference if the $\eta$'s are replaced by the $\eta'$'s on $Y''$.  The rest of the proof then goes through verbatim.

\subsubsection{Other symmetries and reductions} \label{sec:other-symmetries}
In the proof of Proposition~\ref{prop:D(T_i,j,eps)-formula}, we apply Lemma~\ref{lem:approximating-by-1/q} (via Lemma~\ref{lem:D-on-a-slice}) for each choice of $(i,j,\epsilon,\ell), \tau,\delta$, with $q:=\delta(EA+FB)$ and $b:=\delta a\ell/K$ for some parameters $E,F,a,K$ depending on $(i,j,\epsilon,\ell)$ and the residues of $A,B$ modulo $M''$.  Lemma~\ref{lem:approximating-by-1/q} then provides formulas for the residues of $R^-,R^+$ modulo $M$ as certain linear combinations of $A,B$ (with coefficients depending on all of the parameters introduced so far).  In these formulas, changing the sign $\delta$ changes the signs of $q,b$ (as linear combinations of $A,B$), which has the effect of swapping $R^-,R^+$.  In particular, the sign $\delta$ does not affect whether or not $\min\{R^-,R^+\}=0$ or, by extension, whether or not $D(T_{i,j,\epsilon,\ell})=0$.  Thus, when we make our sector decomposition in the proof of Proposition~\ref{prop:D(T_i,j,eps)-formula}, either $D(T)=D(U)$ on all sectors or $D(T)>D(U)$ on all sectors: If $D(T)=D(U)$ on some sector, then the $(i,j,\epsilon,\ell),\tau$ realizing this $D$-value (corresponding to $\min\{R^-,R^+\}=0$) will also realize the same $D$-value on the rest of $\mathbb{R}_{\geq 0} \times \mathbb{R}$. This can be seen in Figure~\ref{tab:sectors-best-approx-u2}.

We conclude with a few minor remarks about conventions and computational redundancies.  First, in the conclusion of Lemma~\ref{lem:approximating-by-1/q}, we can divide through all of $R^-,R^+,xz$ by the common factor $\gcd(x,z)$; this reduces the length of some of our later numerical calculations.
Second, in Section~\ref{sec:main-proof}, we restricted $(A,B)$ to the half-space $\{A \geq 0\}$, but any other half-space would work equally well; sometimes in examples it will be convenient to restrict to different half-spaces.

Third, we observe that since the function $D$ on $(\mathbb{R}/\mathbb{Z})^n$ is an even function, its pushforward by each $\psi=\psi_{i,j,\epsilon}$ is also even.  It follows that when we apply Lemmas~\ref{lem:approximating-by-1/q} and~\ref{lem:D-on-a-slice} in the proof of Proposition~\ref{prop:D(T_i,j,eps)-formula}, the critical values of $\tau$ come in pairs (except for $2$-torsion points).  More precisely, suppose that $U_{i,j,\epsilon}$ has $K$ components.  Then, in the notation of the proof of Proposition~\ref{prop:D(T_i,j,eps)-formula}, we have $D_{i,j,\epsilon,\ell}(x)=D_{i,j,\epsilon,-\ell}(-x)$ (with the index $-\ell$ interpreted modulo $K$).  Since the subgroup $T_{i,j,\epsilon}$ is also obviously invariant under the map $x \mapsto -x$, we see that $D_{i,j,\epsilon,\ell}(\tau_0)=D(U)$ if and only if $D_{i,j,\epsilon,-\ell}(-\tau_0)=D(U)$; in this case, the best approximations to these two minima by elements of $\psi( T_{i,j,\epsilon})$ make the same contribution to the expression of $D(T_{i,j,\epsilon})$ as a minimum over approximations to all critical points $\tau$.  It follows that in order to compute $D(T_{i,j,\epsilon})$ (and, by extension, $D(T)$), it suffices to consider one point from each pair described in the previous sentence.  For example, we could consider only values of $\tau$ in $[0,1/2]$; or we could consider only values of $\tau$ coming from $0 \leq \ell \leq k/2$ (and, for $\ell \in \{0,k/2\}$, look at only $\tau \in [0,1/2]$).  We will use this reduction in all of our computational examples below.

\section{Identifying candidate subtori $U$}\label{sec:identifying-candidates-u}
We now turn to our computational results Theorems~\ref{thm:S_1(3)-second-acc-point} and~\ref{thm:S_1(4)-first-acc-point}, which characterize $\mathcal{S}_1(3) \cap (1/10, 1/6]$ and $\mathcal{S}_1(4) \cap (1/4, 1/2]$. The first step is identifying the relevant $2$-dimensional subtori whose relative spectra contribute to these sets.  In particular, we must find all proper subtori $U \subseteq (\mathbb{R}/\mathbb{Z})^3$ satisfying $\dim(U)=2$ and $D(U)=1/10$, and all proper subtori $U \subseteq (\mathbb{R}/\mathbb{Z})^4$ satisfying $\dim(U)=2$ and $D(U)=1/4$.  We will begin with some general considerations that aid the identification of $2$-dimensional subtori with $D$-value above a given threshold.

%for $n \in {3, 4}$, $\dim(U) \geq 2$, and $D(U)$ is bounded by $1/10$ and $1/4$ respectively. We begin by outlining a two properties that will be useful in both cases of enumerating all such subtori. \NK{``Enumerating'' is not the right word because it refers only to finding the number, not to listing all of the things being counted.} BTW enumerate means to list or to count. 

\subsection{General setup}
We can reduce the length of our calculations by taking advantage of natural symmetries in the Lonely Runner Problem. The function $D$ on $(\mathbb{R}/\mathbb{Z})^n$ is preserved by the automorphisms of $(\mathbb{R}/\mathbb{Z})^n$ given by negating and permuting coordinates. These automorphisms also permute the proper subtori of $(\mathbb{R}/\mathbb{Z})^n$, so, for the purposes of this section, it suffices to consider a single subtorus $U$ in each orbit of this automorphism group; we will say that subtori in the same orbit are equivalent ``up to symmetry''.

Suppose that $U \subseteq (\mathbb{R}/\mathbb{Z})^n$ is a $2$-dimensional proper subtorus. Then there are nonparallel vectors $u=(u_1,\ldots,u_n),v=(v_1,\ldots,v_n) \in \mathbb{Z}^n$ such that $U=\langle u,v \rangle_\mathbb{R}$. The linear independence of $u,v$ implies that there are indices $i<j$ such that $(u_i,u_j),(v_i,v_j) \in \mathbb{Z}^2$ are linearly independent; up to symmetry, we may assume that $(i,j)=(1,2)$.  Then, by replacing $u,v$ with $$(v_2-v_1)u+(u_1-u_2)v, \quad (-v_2-v_1)u+(u_2+u_1)v,$$
we can assume that $u_1=u_2$ and $v_1=-v_2$. For the remainder of this section, we will work with subtori $U$ of the form
\begin{equation}\label{eq:U-symmetrized-form}
U=\langle (u_1, u_1, u_3, \ldots, u_n),(v_1,-v_1,v_3,\ldots,v_n) \rangle_\mathbb{R},
\end{equation}
where $u_1,u_3, \ldots, u_n, v_1,v_3, \ldots, v_n \in \mathbb{Z}$.  We will split our analyses into several cases depending on which of these entries are zero.

\subsection{Subtori $U \subset (\mathbb{R}/\mathbb{Z})^3$ with $D(U) = 1/10$.} \label{sec:u-to-check-1/10}
In this section we identify all $2$-dimensional proper subtori $U \subseteq (\mathbb{R}/\mathbb{Z})^3$ with $D(U) = 1/10$. We will show that, up to symmetry, the only such subtori are \begin{align*}
    U^3:= \langle (0, 1, 4), (1, 0, 0) \rangle_\mathbb{R}, &\quad U^4:= \langle (0, 2, 3), (1, 0, 0) \rangle_\mathbb{R},\\ 
    U^5:= \langle (0, 1, 3), (1, 0, 1) \rangle_\mathbb{R}, &\quad  U^6:= \langle (0, 1, 2), (1, 1, 0) \rangle_\mathbb{R}.
\end{align*}

The arguments in this subsection generalize in a straightforward way to let one find all $2$-dimensional proper subtori $U \subset (\mathbb{R}/\mathbb{Z})^3$ with $D(U) \geq M$ for any fixed $M>0$. The only modification is that the list in Equation \eqref{eq:list-of-speeds-3d} below must be enlarged to contain all pairs $\{x,2r+1-x\}$ with $4r+2 \leq 1/M$ and $1 \leq x \leq r$ ($r,x \in \mathbb{N}$).
%and will change to a list of speed pairs depending on $M$, derived from the complete characterization of the lonely runner spectrum for two runners.

%We note that we only have to consider subtori $U$ of dimension two. We do not consider subtori of dimension three since $\mathcal{S}_3((\mathbb{R}/\mathbb{Z})^3)=\mathcal{S}_1((\mathbb{R}/\mathbb{Z}))$ consists of a single value $1/2$. For the remainder of this section, we fix $U$ to be a two-dimensional proper subtorus of $(\mathbb{R}/\mathbb{Z})^3$ with generators $\langle (a, a, b), (c, -c, d) \rangle$. 
For the remainder of this subsection, we assume that $U$ is a subtorus of the form
$$U=\langle (a, a, b), (c, -c, d) \rangle_\mathbb{R}$$
(with $a,b,c,d \in \mathbb{Z}$) such that $D(U)= 1/10$.  Without loss of generality we may assume that $\gcd(a,b)=\gcd(c,d)=1$.  We condition on which of $a,b,c,d$ vanish.

\subsubsection{No zero entries} Suppose first that $a, b, c,d$ are all nonzero.  Note that
$$1/10= D(U) \leq D(\langle (a,a,b) \rangle_\mathbb{R})=D(\langle (|a|,|b|) \rangle_\mathbb{R}).$$
Recall from~\cite[Theorem 4.1]{thesis} that $D(\langle (|a|,|b|) \rangle_\mathbb{R})$ equals $0$ if $a,b$ are both odd and equals $1/2(a+b)$ if $a,b$ have different parities.  Hence we must have
\begin{equation} \label{eq:list-of-speeds-3d}
    \{|a|, |b|\} \in \{\{1, 2\}, \{1, 4\}, \{2, 3\}\},
\end{equation}
and likewise for $\{|c|,|d|\}$.

%Our goal is to find all two-dimensional $U$ such that $D(U) = 1/10$. Therefore, each generator of $U$ must have a $D$ value that is lower bounded by $1/10$: \[1/10 \leq D(\langle(a, a, b)\rangle) = D(\langle (a, b) \rangle),\] and by symmetry $1/10 \leq D(\langle (c, d) \rangle)$. Without loss of generality we can assume $\gcd(a, b) = \gcd(c, d) = 1$. The characterization of loneliness values with two runners implies that the speeds, up to equivalence, are \begin{equation} \label{eq:list-of-speeds-3d}
%    \{|a|, |b|\}, \{|c|, |d|\} \in \{\{1, 4\}, \{2, 3\}, \{1, 2\}\}.
%\end{equation}
This leads to finitely many possibilities for $U$, and we can compute $D(U)$ for each.  We conclude that, up to symmetry, $U$ must be equal to one of the subtori
\begin{align*}
    &\langle (1, 1, 4), (-1, 1, 4)\rangle_\mathbb{R} = \langle (0, 1, 4), (1, 0, 0) \rangle_\mathbb{R}=U^3, \\ 
    &\langle (2, 2, 3), (-2, 2, 3)\rangle_\mathbb{R} = \langle (0, 2, 3), (1, 0, 0) \rangle_\mathbb{R}=U^4, \\
    &\langle (1, 1, 4), (1, -1, -2)\rangle_\mathbb{R} = \langle (0, 1, 3), (1, 0, 1) \rangle_\mathbb{R}=U^5,
\end{align*}
as desired.

\subsubsection{A single zero entry} Next, suppose that exactly one of $a,b,c,d$ vanishes.  If $a=0$, then we have
$$D(U)=D(\langle (0,0,1),(c,-c,d) \rangle_\mathbb{R})=D(\langle (c,-c) \rangle_\mathbb{R})=0,$$
which contradicts the assumption that $D(U)=1/10$.  So we may assume that $a \neq 0$, and likewise that $c \neq 0$.  The remaining case (up to symmetry) is where $b=0$.  Then
$$U=\langle (1,1,0),(c,-c,d) \rangle_\mathbb{R},$$
where we know that $\{|c|, |d|\} \in \{\{1, 2\}, \{1, 4\}, \{2, 3\}\}$ (by the argument above).  This again leads to only finitely many possibilities for $U$, and computing $D(U)$ for each leads to the conclusion that, up to symmetry, $U$ must be equal to
%If $a = 0$ or $c = 0$, up to symmetry, we have a subtorus given by $\langle (0, 0, b), (c, -c, d) \rangle$. There exists some $t$ such that $tc = 1/2$, and some $s$ such that $bs + dt = 1/2$. Such a subtorus has a $D$ value of $0$. We consider the case when either $b = 0$ or $d = 0$. If $b = 0$, we can scale $(a, a, 0)$ to $(1, 1, 0)$. Similarly if $d = 0$, then we scale $(c, -c, 0)$ to $(1, -1, 0)$. The generator with all nonzero values has entries given by Equation \eqref{eq:list-of-speeds-3d}. After computing $D(U)$ for each possibility, this case gives us one torus to check up to symmetry:
\begin{align*}
    \langle (1, 1, 0), (1, -1, -4)\rangle = \langle (0, 1, 2), (1, 1, 0) \rangle_\mathbb{R}=U^6.
\end{align*}
This concludes the second subcase.

\subsubsection{Two or more zero entries} 
Finally, suppose that at least two of $a,b,c,d$ vanish.  Since $(a,a,b),(c,-c,d)$ are both nonzero, we see that exactly two of $a,b,c,d$ vanish.  We cannot have $a=c=0$ or $b=d=0$ since we assumed that $U$ is proper.  Up to symmetry, it remains only to consider $a=d=0$, in which case
$$D(U)=D(\langle (0,0,1),(1,-1,0) \rangle_\mathbb{R})=0$$
gives a contradiction.  This completes the proof.

%Suppose $\langle (a, a, b), (c, -c, d) \rangle$ contains three zero entries. In this case, one of the generators would be entirely zero, making $U$ one-dimensional. If the generators contain two zero entries, either $a, d = 0$ or $c, b = 0$ to prevent one coordinate from being fixed at zero. In both scenarios, we can enforce all coordinates to be $1/2$. Thus, there are no subtori $U$ with $D(U) = 1/10$ arising from cases with more than one zero entry. This finishes the work of finding all proper two-dimensional subtori $U$ with $D(U) = 1/10$. 

\subsection{Subtori $U \subset (\mathbb{R}/\mathbb{Z})^4$ with $D(U) = 1/4$}

In this subsection, we will describe all proper subtori $U \subseteq (\mathbb{R}/\mathbb{Z})^4$ with $\dim(U) \geq 2$ and $D(U)= 1/4$. More precisely, we will show that, up to symmetry, the only such subtori are $$U^1:=\langle (0,1,2,3),(1,0,0,0) \rangle_{\mathbb{R}}, \quad  U^2 := \langle (1,0,1,1),(1,1,0,2) \rangle_{\mathbb{R}}.$$
We remark that $U^1$ already appeared in \cite{thesis} but $U^2$ would have been more difficult to guess.

We do not have to consider subtori of dimension larger than $2$ because \cite{vikram} tells us that $\mathcal{S}_3(4)=\mathcal{S}_1(2)$ has maximum value $1/6$.   For the remainder of this subsection, we assume that $U$ is a subtorus of the form
$$U=\langle (a, a, b,c), (d, -d, e,f) \rangle_\mathbb{R}$$
(with $a,b,c,d,e,f \in \mathbb{Z}$) such that $D(U)=1/4$.

We will repeatedly use the fact that if $a,b,c$ are all nonzero, then
$$1/4= D(U) \leq D(\langle (a,a,b,c)\rangle_{\mathbb{R}})=D(\langle (|a|,|b|,|c|)\rangle_{\mathbb{R}}).$$
We may assume without loss of generality that $\gcd(|a|,|b|,|c|)=1$, and then the characterization of equality for the Lonely Runner Conjecture with $3$ runners \cite[Section 5]{thesis} implies that $$\{|a|,|b|,|c|\}=\{1,2,3\};$$
this is useful because it completely determines one of the generators of $U$.

We also note that if $d=0$ (i.e., $U$ is contained in the subspace $\{x_1=x_2\}$), then we have
$$1/4= D(U)=D(\langle (a,a,b,c),(0,0,e,f)\rangle_{\mathbb{R}})=D(\langle (a,b,c),(0,e,f)\rangle_{\mathbb{R}}) \in \mathcal{S}_2(3).$$
But this is impossible since the largest value of $\mathcal{S}_2(3)=\mathcal{S}_1(2)$ is $1/6$, so we can always assume that $d \neq 0$.  The same argument lets us assume that $a \neq 0$ (and more generally that $U$ is not contained in any subspace $\{x_i=\epsilon x_j\}$).  We proceed by a case analysis as in the previous subsection.

\subsubsection{No zero entries} 
Suppose that $a,b,c,d,e,f$ are all nonzero.  Then, after possibly scaling the generators of $U$, we have $$\{|a|,|b|,|c|\}=\{|d|,|e|,|f|\}=\{1,2,3\}.$$
These finitely many possibilities lead, up to symmetry, to only
$$\langle (1, 1, 2, 3), (-1, 1,  2,  3)\rangle = \langle (0, 1, 2, 3), (1, 0,  0,  0) \rangle_\mathbb{R}=U^1.$$

\subsubsection{A single zero entry}
Suppose that exactly one of $a,b,c,d,e,f$ vanishes.  Up to symmetry, the only possibility is $b=0$ (recall that $a,d \neq 0$).  After possibly scaling the second generator of $U$, we may assume that $\{|d|,|e|,|f|\}=\{1,2,3\}$; fix one such choice of values for $d,e,f$.  A change of basis, with an eye on the third and fourth coordinates, gives
$$U = \langle (cd + a(e-f), -cd + a(e-f), ce, ce), (cd - a(e+f), -cd - a(e+f), ce, -ce)\rangle_\mathbb{R}.$$
This change of basis has determinant $2ce$, which is nonzero by the assumption that $c,e \neq 0$.  If all of the entries of these new generators are nonzero, then we land in the case that we already considered in the first subsection (with the third and fourth coordinates playing the role of the first and second coordinates).  It remains to consider the case where some entry vanishes. Recall that $d,e,f$ are already fixed. Since each entry is a linear combination of $a,c$, the vanishing of an entry determines the ratio of $a,c$, which in turn determines the vector $(a,a,0,c)$ up to scaling. Computing $D(U)$ for each possibility, we find that, up to symmetry, $U$ must equal \[\langle (1, 1, 2, 3), (1, -1, 0. -1) \rangle_\mathbb{R} = \langle (1, 0, 1, 1), (1, 1, 0, 2) \rangle_\mathbb{R}=U^2.\]

\subsubsection{Two or more zero entries} 
Suppose that at least two of $a,b,c,d,e,f$ vanish, and recall from above that we may assume that $a,d, \neq 0$.  We cannot have $b=e=0$ or $c=f=0$ because of the assumption that $U$ is proper.  So exactly two of $b,c,e,f$ vanish.  If $b=c=0$, then we have
$$U=\langle (a,a,0,0),(d,-d,e,f) \rangle_{\mathbb{R}}=\langle (1,1,0,0),(d,-d,e,f) \rangle_{\mathbb{R}},$$
and we can take $\{|d|,|e|,|f|\}=\{1,2,3\}$; this leads to finitely many possibilities for $U$, none of which leads to any new possibilities for $U$ besides $U^1$ and $U^2$. The case $e=f=0$ is identical.  

It remains to consider the cases where $b=f=0$ and where $c=e=0$; these are equivalent up to symmetry, so we will consider only the former. Suppose that  $b=f=0$, so that
%; we may assume that $c,e \neq 0$ since otherwise we are in one of the cases disposed of in the previous paragraph.  
%Then we have
$$U=\langle (a,a,0,c),(d,-d,e,0) \rangle_{\mathbb{R}}.$$
Replacing these two generators with linear combinations to make the third and fourth coordinates more agreeable yields
$$U = \langle (cd + ae, -cd + ae, ce, ce), (cd - ae, -cd - ae, ce, -ce) \rangle_{\mathbb{R}}$$
(again this change of coordinates has determinant $2ce\neq 0$).
If all of the entries of these new generators are nonzero, then we land in the case that we already considered in the previous subsection, with the third and fourth coordinates playing the role of the first and second coordinates.

It remains to consider the case where some entry vanishes, i.e., $cd=\pm ae$ (recall $ce \neq 0$).  These cases are again equivalent up to symmetry, so suppose that the former occurs and
$$U=\langle (a,a,0,c),(a,-a,c,0) \rangle_{\mathbb{R}}.$$
Another change of basis, this time focusing on the first and third coordinates, gives
$$U=\langle (ac,ac-2a^2,ac,c^2-ac),(ac,ac+2a^2,-ac,ac+c^2) \rangle_{\mathbb{R}}$$
(this change of coordinates has determinant $-2ac \neq 0$).
We are done if all of the entries of these new generators are nonzero, so it remains only to check the cases where some entry vanishes, viz., $c= \pm 2a$ or $c= \pm a$ (recall $a,c \neq 0$).  Since we are free to rescale the generators of $U$, we may assume that $a=1$.  Then each of the four cases gives a single value for $c$; this in turn determines the ratio $d/e=a/c$, and so we can take $d=a$ and $e=c$.  Computing $D(U)$ in each case, we do not find any new possibilities for $U$ besides $U^1,U^2$. This completes the proof.

\section{The top of the spectrum $\mathcal{S}_1(4)$} \label{sec:explicit-comp-s1(4)}

As an illustration of Theorem~\ref{thm:relative-spectrum-improved}, we will now prove Theorem~\ref{thm:S_1(4)-first-acc-point}, which states that the set $\mathcal{S}_1(4) \cap (1/4, 1/2]$ has finite symmetric difference with the set $1/4 + 1/4\Prog(2, 3)$.  In the previous section we showed that, up to symmetry, 
$$U^1 = \langle (0, 1, 2, 3), (1, 0, 0, 0) \rangle_\mathbb{R} \quad \text{and} \quad U^2 = \langle(1, 0, 1, 1), (1, 1, 0, 2) \rangle_{\mathbb{R}}$$
are the only $2$-dimensional subtori $U \subseteq (\mathbb{R}/\mathbb{Z})^4$ with $D(U)= 1/4$. Now \eqref{eq:acc-union-of-relative-spectra} tells us that to prove Theorem~\ref{thm:S_1(4)-first-acc-point}, it remains only to use Theorem~\ref{thm:relative-spectrum-improved} to analyze $\mathcal{S}_1(U^1)$ and $\mathcal{S}_1(U^2)$ individually.  Notice that these choices of generators for $U^1,U^2$ satisfy the condition \eqref{eq:lattice-condition}.

\subsection{Computing $\mathcal{S}_1(U^1)$}\label{sec:s1(4)-first-example}
The objective of this subsection is to prove the following proposition.
\begin{proposition}\label{prop:U_1-calculation}
The set $\mathcal{S}_1(U^1)$ has finite symmetric difference with the set $1/4+1/4\Prog(4,5)$.
\end{proposition}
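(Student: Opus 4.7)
The plan is to apply the dichotomy of Section~\ref{sec:sectors-and-lines}. Writing $u=(0,1,2,3)$ and $v=(1,0,0,0)$, each $1$-dimensional subtorus of $U^1$ has the form $T = \langle Au+Bv\rangle_\mathbb{R} = \langle (B,A,2A,3A)\rangle_\mathbb{R}$ for some $(A,B) \in \mathcal{T}$. I begin by observing that $U^1$ achieves its $D$-value $1/4$ on the two positive-length arcs $\{(x,1/4,1/2,3/4)\}$ and $\{(x,3/4,1/2,1/4)\}$ with $x \in [1/4,3/4]$. By Section~\ref{sec:sectors-and-lines} we are therefore in the ``infinitely many minimizers'' regime, so every progression in $\mathcal{S}_1(U^1)$ is contributed by an exceptional half-line; the remaining work is to locate those half-lines and compute $D(T)$ on each.

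To identify the half-lines, I will work with the quadruple $(i,j,\epsilon)=(2,4,-)$. The subgroup $U^1_{2,4,-}$ has $K=4$ components (indexed by $t \in \{0,1/4,1/2,3/4\}$), and for $\ell \in \{1,3\}$ the restriction $D_{2,4,-,\ell}(x) = \max(\|x-1/2\|,1/4)$ is constant equal to $1/4$ on the length-$1/2$ interval $[1/4,3/4]$. Applying Proposition~\ref{prop:cyclic-subgroup-parameters} with $u' = v$ and $v' = u$ yields $q_{2,4,-,\ell} = A$; since a $1/A$-spaced coset of $\mathbb{R}/\mathbb{Z}$ necessarily meets any length-$1/2$ interval once $A \geq 2$, we obtain $D(T) = 1/4$ for all $A \geq 2$. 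The exceptional half-lines are therefore only $A=0$ (which contains the single subtorus $\langle (1,0,0,0)\rangle_\mathbb{R}$ with $D=1/2$, a finite exception) and $A=1$.

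It remains to handle $A=1$, where $T = \langle (B,1,2,3)\rangle_\mathbb{R}$ for $B \in \mathbb{Z}$. First I split on $B \bmod 4$: if $B \not\equiv 0 \pmod 4$, then the unique point of $T_{2,4,-,1}$ is still within distance $1/4$ of $(1/2,\ldots,1/2)$, forcing $D(T)=1/4$ and contributing nothing beyond the constant. When $B \equiv 0 \pmod 4$, I turn instead to the six quadruples $(1,j,\pm,0)$ for $j \in \{2,3,4\}$. For each such quadruple $U^1_{1,j,\pm}$ is a single circle whose $D$-function is (up to duplicate coordinates) the loneliness function for the speeds $(1,2,3)$, so it attains $1/4$ only at $\tau \in \{1/4,3/4\}$, with left/right slopes $\{1,3\}$ at each critical point. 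Combining Proposition~\ref{prop:cyclic-subgroup-parameters} with Lemma~\ref{lem:D-on-a-slice} then gives formulas
$$D(T_{1,j,\pm,0}) = \frac{1}{4} + \frac{\kappa}{4q},$$
where $q = |B \mp jA|$ is a linear function of $(A,B)$ and $\kappa \in \{0,1,2,3\}$ depends only on $q \bmod 4$ and the slope data.

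The main obstacle will be to compare these six candidate formulas to locate the dominant one. For $B=4k$ with $k \geq 1$, a direct residue calculation shows that the minimum is attained by $(1,2,-,0)$, where $q = 4k+1 \equiv 1 \pmod 4$ and $\kappa = 1$, producing $D(T) = 1/4 + 1/(16k+4)$; the case $B=-4k$ is symmetric and is attained instead by $(1,2,+,0)$, yielding the same value. The resulting set $\{1/4 + 1/(16k+4) : k \geq 1\}$ agrees with $1/4 + 1/4\Prog(4,5)$, which proves the proposition up to the finite symmetric difference coming from $A=0$, the isolated extra value $1/4$ (achieved for all $A \geq 2$ and for many pairs on $A=1$), and any small-$|B|$ cases on the $A=1$ line where the asymptotic formula in Lemma~\ref{lem:D-on-a-slice} does not yet apply.
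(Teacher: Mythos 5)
Your proof is correct and follows essentially the same route as the paper's: you use the positive-length $D$-minimizing arcs in $U^1_{2,4,-}$ to reduce to the exceptional half-lines with $A \le 1$, discard the trivial $A=0$ case and the $B\not\equiv 0\pmod 4$ subcases on $A=1$, and compare the six candidate formulas $D(T_{1,j,\pm,0})$ to see that $(1,2,-,0)$ dominates, yielding $D(T)=1/4+1/(4(B+1))$.  (The paper parametrizes by $B\ge 0$ instead of $A\ge 0$, so its two exceptional half-lines are $A=\pm 1$, but this is an immaterial change of convention.)  One small notational slip: since the $j$-th coordinate of $(0,1,2,3)$ is $j-1$, the order of the finite group $T_{1,j,\pm,0}$ should be $q=|B\mp (j-1)A|$ rather than $|B\mp jA|$; your subsequent evaluation $q=4k+1=B+A$ for $(1,2,-,0)$ shows you in fact used the correct values, so the conclusion is unaffected.
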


We parameterize $1$-dimensional subtori of $U^1$ as
 \[T = \langle A(0, 1, 2, 3)+B(1, 0,  0,  0)\rangle_\mathbb{R} =  \langle (B, A, 2A, 3A) \rangle_\mathbb{R},\] 
for coprime integers $A,B$ with $B \geq 0$ (see Section~\ref{sec:other-symmetries}).  We begin by computing the intersections $U_{i, j, \epsilon}= U^1 \cap \{x_i=\epsilon x_j\}$; see Figure~\ref{fig:U-i-j-eps}.
\begin{figure}\label{fig:U-i-j-eps}
  \centering
  \caption{Intersections of $U^1$ with the subspaces $\{x_i = \epsilon x_j\}$. The last column gives (where applicable) the values of $\ell$ for which $D(U_{i,j,\epsilon,\ell}) = 1/4$ and the values of $\tau$ where this $D$-value is achieved; we write ``$\ell: S$'' to indicate that $U_{i,j,\epsilon,\ell}$ achieves the $D$-value $1/4$ on the set $S$ (and we omit $\ell$ when $U_{i, j \epsilon}$ is connected). In the case of the antepenultimate row, the $D$-value $1/4$ is achieved by $\tau \in [1/4,3/4]$ for both $\ell=1$ and $\ell=3$.  \label{fig:U-i-j-eps}}
  \label{tab:intersection-planes-u1}
  \begin{tabular}{llll}
    \toprule
    Subspace & \textbf{$U_{i,j,\epsilon}$} & \textbf{$D(U_{i,j,\epsilon})$} & $\tau$ achieving $1/4$\\
    \midrule
    $\{x_1 = x_2\}$ & $\langle(1, 1, 2, 3)\rangle_{\mathbb{R}}$ & $1/4$ & $\{1/4, 3/4\}$\\
    $\{x_1 = -x_2\}$ & $ \langle(-1, 1, 2, 3)\rangle_{\mathbb{R}}$ & $1/4$ & $\{1/4, 3/4\}$\\
    $\{x_1 = x_3\}$ & $\langle(2, 1, 2, 3)\rangle_{\mathbb{R}}$ & $1/4$ & $\{1/4, 3/4\}$\\
    $\{x_1 = -x_3\}$ & $\langle(-2, 1, 2, 3)\rangle_{\mathbb{R}}$ & $1/4$ & $\{1/4, 3/4\}$\\
    $\{x_1 = x_4\}$ & $\langle(3, 1, 2, 3)\rangle_{\mathbb{R}}$ & $1/4$ & $\{1/4, 3/4\}$\\
    $\{x_1 = -x_4\}$ & $\langle(-3, 1, 2, 3)\rangle_{\mathbb{R}}$ & $1/4$ & $\{1/4, 3/4\}$\\
    $\{x_2 = x_3\}$ & $\langle (1, 0, 0, 0) \rangle_{\mathbb{R}}$ & $1/2$ & $-$\\
    $\{x_2 = -x_3\}$ & $\cup_{\ell = 0}^2 \langle (1, 0, 0, 0) \rangle_{\mathbb{R}} + (0, \ell/3, 2\ell/3, 0)$ & $1/2$ & $-$\\
    $\{x_2 = x_4\}$ & $\cup_{\ell = 0}^1 \langle (1, 0, 0, 0) \rangle_{\mathbb{R}} + (0, \ell/2, 0, \ell/2)$ & $1/2$ & $-$\\
    $\{x_2 = -x_4\}$ & $\cup_{\ell = 0}^3 \langle (1, 0, 0, 0) \rangle_{\mathbb{R}} + (0, \ell/4, 2\ell/4, 3\ell/4)$ & $1/4$ & $1, 3: [1/4, 3/4]$\\
    $\{x_3 = x_4\}$ & $\langle (1, 0, 0, 0) \rangle_{\mathbb{R}}$ & $1/2$ & $-$\\
    $\{x_3 = -x_4\}$ & $\cup_{\ell = 0}^4 \langle (1, 0, 0, 0) \rangle_{\mathbb{R}} + (0, \ell/5, 2\ell/5, 3\ell/5)$ & $3/10$ & $-$\\
    \bottomrule
  \end{tabular}
\end{figure}

Carrying out the reductions outlined in Section~\ref{sec:sectors-and-lines},
we quickly dispose of all pairs $(A,B)$ except for those with $|A| = 1$ and $B \equiv 0 \pmod{4}$. The key idea is to leverage the fact that the restriction of $D$ to $U_{2,4,-,1}$ and $U_{2,4,-,3}$ equals $1/4$ on intervals of positive length; we will find that $T$ inevitably intersects these intervals unless $(A,B)$ is of the special form described in the previous sentence.

Consider the isomorphism $\psi=\psi_{2,4,-}:\langle (0,1,2,3),(1,0,0,0) \rangle_\mathbb{R} \to \mathbb{R}^2$ given by $\psi(0,1,2,3)=(0,1)$ and $\psi(1,0,0,0)=(1,0)$; since $\psi^{-1}(\langle (1,0),(0,1) \rangle_\mathbb{Z})=\langle (0,1,2,3),(1,0,0,0) \rangle_\mathbb{R} \cap \mathbb{Z}^4$, the map $\psi$ descends to an isomorphism $U^1 \to (\mathbb{R}/\mathbb{Z})^2$.  In these new coordinates, we compute that
$$\psi(U_{2,4,-})=(\mathbb{R}/\mathbb{Z}) \times \langle 1/4 \rangle_\mathbb{Z}$$
and
$$\psi(T) = \psi(\langle A(0,1,2,3) + B(1,0,0,0) \rangle_\mathbb{R}) = \langle(B, A)\rangle_\mathbb{R}.$$
One can check that $D_{2,4,-}$ assumes the constant value $1/4$ on the intervals $[1/4,3/4] \times \{1/4,3/4\}$.  As described in Proposition~\ref{prop:cyclic-subgroup-parameters}, the subgroup $$\psi(T_{2,4,-})=\langle(B, A)\rangle_\mathbb{R} \cap ((\mathbb{R}/\mathbb{Z}) \times \langle 1/4 \rangle_\mathbb{Z})=\bigcup_{\ell = 0}^{3} \left(\left(\frac{B \ell}{4A} + \left\langle\frac{1}{|A|}\right\rangle_{\mathbb{Z}}\right) \times \left\{\frac{\ell}{4} \right\}\right)$$
intersects each coset $(\mathbb{R}/\mathbb{Z}) \times \{\ell/4\}$ in exactly $|A|$ equally-spaced points (since $A,B$ are coprime).  In particular, if $|A| \geq 2$, then $\psi(T_{2,4,-})$ intersects (both of) the aforementioned intervals where $D$ assumes the constant value $1/4$, which implies that $D(T) \leq 1/4$; since of course $D(T) \geq D(U^1)=1/4$, we conclude that $D(T)=1/4$, and so all such pairs $(A,B)$ contribute only the value $1/4$ to the set $\mathcal{S}_1(U^1)$.  Henceforth, we will restrict our attention to the case $|A|=1$.  In the language of Section~\ref{sec:main-proof}, the pairs $(1,B)$ and $(-1,B)$ lie on two exceptional half-lines.  Now, when $B \not\equiv 0 \pmod{4}$, we have
\begin{equation}\label{eq:U-2-4---1}
\psi(T_{2,4,-,1})=\{(B/4A,1/4)\} \in [1/4,3/4] \times \{1/4\},
\end{equation}
which implies as above that $D(T)=1/4$.  Thus it remains only to treat the case $B \equiv 0 \pmod{4}$.\footnote{We remark that the reductions in this paragraph can also be explained in terms of runners, using the language of ``pre-jumps'' (see~\cite{BGGST}).  Consider runners with speeds $B,A,2A,3A$.  At times of the form $1/(4A)+s/A$ (for $s \in \mathbb{Z}$), the runners are at the positions $B/4A+Bs/A,1/4,1/2,3/4$; the point is that only the first of these positions actually changes with $s$, since the pre-jump by $1/A$ fixes the other three positions.  The first position ranges over the set $B/4A+\langle 1/|A| \rangle_\mathbb{Z}$ as $s$ varies; in particular, it lies in $[1/4,3/4]$ for some $s$ unless $|A|=1$ and $B \equiv 0 \pmod{4}$.}

Assume that $|A|=1$, that $B \equiv 0 \pmod{4}$, and that $B \geq 0$.  The next step is computing $D(T_{i,j,\epsilon,\ell})$ for each quadruple $(i,j,\epsilon,\ell)$ with $D(U_{i,j,\epsilon,\ell})=1/4$.  It follows from~\eqref{eq:U-2-4---1} that $D(T_{2,4,-,1})=1/2$, and likewise $D(T_{2,4,-,3})=1/2$.  For each remaining $(i,j,\epsilon,\ell)$ under consideration, we have $\ell=0$ and the critical values of $\tau$ are $1/4,3/4$.  As described in Section~\ref{sec:other-symmetries}, it suffices to analyze the behavior around $\tau=1/4$.

As an example, let us write down some details of the calculation for $(i,j,\epsilon,\ell)=(1,2,+,0)$; the other five cases are completely analogous.  This time, our isomorphism $\psi=\psi_{1,2,+}$ is given by $\psi(1,1,2,3)=(1,0)$ and $\psi(1,0,0,0)=(0,1)$, so that $$\psi(T)=\psi(\langle A(0,1,2,3) + B(1,0,0,0) \rangle_\mathbb{R})=\psi(\langle A(1,1,2,3) + (B-A)(1,0,0,0) \rangle_\mathbb{R})=\langle (A,B-A) \rangle_\mathbb{R}.$$
Since $\psi(U_{1,2,+,0})=(\mathbb{R}/\mathbb{Z}) \times \{0\}$, we have
$$\psi(T_{1,2,+,0})= \langle 1/|B-A| \rangle_\mathbb{Z} \times \{0\}.$$
Notice that $B-A \geq 4-1>0$ except for the single case $(A,B)=(\pm 1,0)$, which we can safely ignore.  Then
$$\Approx^+(1/4;0,B-A)=\frac{\lceil (B-A)/4 \rceil}{B-A}-\frac{1}{4}=\frac{R^+}{4(B-A)},$$
where $R^+ \in \{0,1,2,3\}$ satisfies $R^+ \equiv A-B \pmod{4}$.  Likewise, $\Approx^-(1/4;0,B-A)=R^-/4(B-A)$, where $R^- \in \{0,1,2,3\}$ satisfies $R^- \equiv B-A \pmod{4}$.  The function $D_{1,2,+,0}(t)=D(t(1,1,2,3))$ assumes its minimum value at $\tau=1/4$ (and at $3/4$).  By computing
$$D_{1,2,+,0}(1/4+\varepsilon)=D((1/4+\varepsilon,1/4+\varepsilon, 1/2+2\varepsilon, 3/4+3\varepsilon))=1/4+\max \{-\varepsilon,3\varepsilon\},$$
we see that $\lambda^+=3$ and $\lambda^-=1$ determine the slopes of the pieces of $D_{1,2,+,0}$ directly adjacent to the point $\tau=1/4$.  (Recall that the slope of the piece to the left is defined to be $-\lambda^-$.) Figure~\ref{tab:approx-1/4-0123-1000} shows the output of these calculations, in the case $A=1$, for all of the relevant quadruples $(i,j,\epsilon,\ell)$.

\begin{figure}
  \centering
%  \caption{Approximations to $\tau = 1/4$ in the case $A = 1$, $B \geq 0, B \equiv 0 \pmod{4}$. The fifth and sixth columns are considered modulo four. We note that for $2, 4, -, 1$,  $\psi(T_{i, j, \epsilon, \ell}) = \{ 1 \} \times \{\frac{1}{4}\}$ and for $2, 4, -, 3$,  $\psi(T_{i, j, \epsilon, \ell}) = \{ 1 \} \times \{\frac{3}{4}\}$, demonstrating we are on exceptional lines. Therefore, these rows are not included in table.}
 \caption{Approximations to $\tau = 1/4$ for $U^1$ in the case $A = 1$, $B>0, B \equiv 0 \pmod{4}$.  In each case, the values of $R^+,R^-$ were calculated as the unique values in $\{0,1,2,3\}$ congruent modulo $4$ to certain integer linear combinations of $A,B$.}
  \label{tab:approx-1/4-0123-1000}
  \begin{tabular}{llllllll}
    \toprule
    $i, j, \epsilon, \ell$ & $\psi_{i, j, \epsilon}(T_{i, j, \epsilon, \ell})$ & $\lambda^+$ & $\lambda^-$ & $R^+$ & $R^-$ & $\Approx^+$ & $\Approx^-$\\
    \midrule
    $1, 2, +, 0$ & $\langle \frac{1}{|B-A|} \rangle_{\mathbb{Z}} \times \{0\}$ & 3 & 1 & $A-B \leadsto 1$ & $B-A \leadsto 3$ & $\frac{1}{4(B-1)}$ & $\frac{3}{4(B-1)}$\\
    $1, 2, -, 0$ & $\langle \frac{1}{|B+A|} \rangle_{\mathbb{Z}} \times \{0\}$ & 3 & 1 & $-B-A \leadsto 3 $ & $A+B \leadsto 1 $ & $\frac{3}{4(B+1)}$ & $\frac{1}{4(B+1)}$\\
    $1, 3, +, 0$ & $\langle \frac{1}{|B-2A|} \rangle_{\mathbb{Z}} \times \{0\}$ & 3 & 1 & $2A-B \leadsto 2 $ & $B-2A \leadsto 2 $ & $\frac{2}{4(B-2)}$ & $\frac{2}{4(B-2)}$\\
    $1, 3, -, 0$ & $\langle \frac{1}{|B+2A|} \rangle_{\mathbb{Z}} \times \{0\}$ & 3 & 1 & $-B-2A \leadsto 2 $ & $2A+B \leadsto 2 $ & $\frac{2}{4(B+2)}$ & $\frac{2}{4(B+2)}$\\
    $1, 4, +, 0$ & $\langle \frac{1}{|B-3A|} \rangle_{\mathbb{Z}} \times \{0\}$ & 3 & 1 & $3A-B \leadsto 3 $ & $B-3A \leadsto 1 $ & $\frac{3}{4(B-3)}$ & $\frac{1}{4(B-3)}$\\
    $1, 4, -, 0$ & $\langle \frac{1}{|B+3A|} \rangle_{\mathbb{Z}} \times \{0\}$ & 3 & 1 & $-B-3A \leadsto 1 $ & $3A+B \leadsto 3 $ & $\frac{1}{4(B+3)}$ & $\frac{3}{4(B+3)}$\\
    \bottomrule
  \end{tabular}
\end{figure}

When $B$ is sufficiently large, we have $$D(T_{i,j,\epsilon,\ell})=1/4+\min \{\lambda^+ \Approx^+,\lambda^- \Approx^-\}$$ for each quadruple $(i,j,\epsilon,\ell)$ from Figure~\ref{tab:approx-1/4-0123-1000}, and then $D(T)$ is the minimum of these six quantities.  The result of these comparisons is that $D(T)=1/4+1/4(B+1)$ (coming from $T_{1,2,-,0}$).  Substituting $B=4s$---note that $A,B$ are automatically coprime since $A=1$---we obtain the progression $1/4+1/4\Prog(4,5)$.  The analogous calculation for the half-line with $A=-1$ yields the same progression.  Hence $\mathcal{S}_1(U^1)$ has finite symmetric difference with $1/4+1/4\Prog(4,5)$, as desired.

\subsection{Computing $\mathcal{S}_1(U^2)$} \label{subsec:computing-s1(u2)}
Our calculation of $\mathcal{S}_1(U^2)$ is more involved and can be summarized as follows; Theorem~\ref{thm:S_1(4)-first-acc-point} is then a quick consequence since $\Prog(4,5) \subseteq \Prog(2,3)$.
\begin{proposition}\label{prop:U_2-calculation}
The set $\mathcal{S}_1(U^2)$ has finite symmetric difference with the set $1/4+1/4\Prog(2,3)$.
\end{proposition}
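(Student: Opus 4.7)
The plan is to follow the strategy used for $U^1$ in Section~\ref{sec:s1(4)-first-example}, but with a substantially more elaborate sector/residue-class analysis. Since $1/4\Prog(2,3)$ has common difference $2$ and in particular should accommodate the ``exceptional'' family \eqref{eq:alec-alex} (whose denominators $16s+60 = 4(4s+15)$ lie in $4\Prog(2,3)$ but not in $4\Prog(4,5)$), I expect that, unlike $U^1$, the function $D$ is attained at only finitely many points of $U^2$. By Section~\ref{sec:sectors-and-lines} this places us in the sector-based case of the proof of Theorem~\ref{thm:relative-spectrum-improved}, so the relative spectrum will come entirely from interior sector contributions rather than from exceptional half-lines.

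First I would parameterize $T = \langle A(1,0,1,1) + B(1,1,0,2)\rangle_\mathbb{R} = \langle (A+B, B, A, A+2B)\rangle_\mathbb{R}$ for coprime $A,B$, noting that the chosen generators satisfy \eqref{eq:lattice-condition}. Then I would enumerate the twelve intersections $U^2_{i,j,\epsilon}$, compute their $D$-values and describe their connected components $U^2_{i,j,\epsilon,\ell}$, and identify the set $Y$ of quadruples $(i,j,\epsilon,\ell)$ with $D(U^2_{i,j,\epsilon,\ell}) = 1/4$, together with the finitely many critical points $\tau$ at which this $D$-value is achieved. The evenness reduction from Section~\ref{sec:other-symmetries} roughly halves the bookkeeping by letting me consider only one representative from each pair $\{\tau,-\tau\}$.

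For each such quadruple $(i,j,\epsilon,\ell)$ and critical $\tau$, I would apply Proposition~\ref{prop:cyclic-subgroup-parameters} and Lemma~\ref{lem:approximating-by-1/q} to obtain a formula of the shape
$$D(T_{i,j,\epsilon,\ell}) = \frac{1}{4} + \frac{\min(\lambda^+ R^+,\, \lambda^- R^-)}{xz(EA + FB)},$$
with $R^{\pm}$ determined by the residue of $(A,B)$ modulo some fixed modulus $M$. This produces a table analogous to Figure~\ref{tab:approx-1/4-0123-1000}. Taking the common refinement of all pairwise-comparison sectors, stratified by residues of $(A,B)$ modulo $M$, yields on each piece a single formula $D(T) = 1/4 + 1/(E'A + F'B)$ with rational $E',F'$. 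Lemma~\ref{lem:coprime} then guarantees that as $(A,B)$ ranges over coprime pairs in the piece with $A^2+B^2$ large, the linear form $E'A + F'B$ attains all but finitely many values in a one-dimensional progression $\Prog(\alpha,\beta)$. Collecting over all pieces gives a finite union of sets $1/4 + 1/\Prog(\alpha,\beta)$, which I would simplify and verify equals $1/4 + 1/4\Prog(2,3)$ up to finite symmetric difference.

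The main obstacle will be the sheer bookkeeping volume: twelve subspaces, each possibly contributing multiple components and multiple critical $\tau$'s, times a nontrivial $M$ and a refined sector partition, produces many cases, on each of which the minimizing quadruple in $Y$ must be identified. A useful guide through the case analysis is that the proof must recover the family \eqref{eq:alec-alex} as well as a second family filling out the odd residue class of $4\Prog(2,3)$ modulo $4\Prog(4,5)$; pinning down, in advance, which sector and residue class produces each of these expected progressions should help keep the enumeration organized and provide a running sanity check.
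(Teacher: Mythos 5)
Your plan follows the paper's proof of Proposition~\ref{prop:U_2-calculation} essentially exactly: confirm (via the table of intersections) that $U^2$ attains its $D$-value $1/4$ only at finitely many points, so by Section~\ref{sec:sectors-and-lines} the relative spectrum comes from sectors rather than exceptional half-lines; reduce to $\tau=1/4$ by the evenness symmetry of Section~\ref{sec:other-symmetries}; apply Proposition~\ref{prop:cyclic-subgroup-parameters} and Lemma~\ref{lem:approximating-by-1/q} to tabulate $R^\pm$, $\lambda^\pm$ for each quadruple in $Y$; refine into sectors by residue class of $(A,B)\pmod 4$; and invoke Lemma~\ref{lem:coprime} to confirm attainment of the denominators. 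However, your write-up is an outline that stops short of the computation itself, and for a concrete claim like this the computation \emph{is} the proof: you have not identified the six quadruples $(i,j,\epsilon,\ell)$ with $D(U_{i,j,\epsilon,\ell})=1/4$, the residue-dependent formulas for $R^\pm$, the sector decompositions, or the sector-by-sector minimization that shows the offset $4(D(T)-1/4)$ lands exactly in $\{1/(4t+1)\}\cup\{1/(2t+1)\}$, nor have you carried out the final simplification to $1/4+1/4\Prog(2,3)$. The approach is right, but as presented there is no verification that the answer is $\Prog(2,3)$ rather than, say, a finite union of narrower progressions or some different shift.
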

We parameterize $1$-dimensional subtori of $U^2$ as 
\[T = \langle A(1, 0, 1, 1) + B(1, 1, 0, 2) \rangle_\mathbb{R} = \langle (A + B, B, A, A + 2B) \rangle_\mathbb{R},\] for coprime $A$ and $B$ with $A \geq 0$.  As before, we intersect $U^2$ with the subspaces $\{x_i = \epsilon x_j\}$ to get the subgroups $U_{i, j, \epsilon}$, and then we compute the values $D(U_{i, j, \epsilon})$; see Figure~\ref{tab:intersection-planes-u2}. In contrast to the example in the previous subsection, the $D$-value $1/4$ is achieved by only finitely many points of $U^2$ (as opposed to intervals of positive length).  The considerations of Section~\ref{sec:sectors-and-lines} tell us that it will suffice to study only sectors, rather than exceptional half-lines (as in the previous subsection).  By Section~\ref{sec:other-symmetries}, it suffices to work with $\tau=1/4$.
 
% \NK{(What was written here before was not quite complete---one also has to say something about how $T_{i,j,\epsilon,\ell}$ is preserved.  I think that it is better to hand-wave than to spend space writing out all of the details, especially since the skeptical reader could just do out the $\tau=3/4$ computations on their own if they really care.)}
 
\begin{figure}
  \centering
  \caption{Intersections of $U^2$ with the subspaces $\{x_i = \epsilon x_j\}$.}
  \label{tab:intersection-planes-u2}
  \begin{tabular}{llll}
    \toprule
    Subspace & $U_{i, j, \epsilon}$ & $D(U_{i, j, \epsilon})$ & $\tau$ achieving $1/4$\\
    \midrule
    $\{x_1 = x_2\}$ & $\langle(1, 1, 0, 2)\rangle_{\mathbb{R}}$ & 1/2 & $-$\\
    $\{x_1 = -x_2\}$ & $ \langle(1, -1, 2, 0)\rangle_{\mathbb{R}}$ & 1/2 & $-$\\
    $\{x_1 = x_3\}$ & $\langle(1, 0, 1, 1)\rangle_{\mathbb{R}}$ & 1/2 & $-$\\
    $\{x_1 = -x_3\}$ & $\langle(1, 2, -1, 3)\rangle_{\mathbb{R}}$ & 1/4 & $\{1/4, 3/4\}$\\
    $\{x_1 = x_4\}$ & $\langle(1, 0, 1, 1)\rangle_{\mathbb{R}}$ & 1/2 & $-$\\
    $\{x_1 = -x_4\}$ & $\langle(-1, 2, -3, 1)\rangle_{\mathbb{R}}$ & 1/4 &  $\{1/4, 3/4\}$\\
    $\{x_2 = x_3\}$ & $\langle (2, 1, 1, 3) \rangle_{\mathbb{R}}$ & 1/4 & $\{1/4, 3/4\}$\\
    $\{x_2 = -x_3\}$ & $\langle (0, 1, -1, 1) \rangle_{\mathbb{R}}$ & 1/2 & $-$\\
    $\{x_2 = x_4\}$ & $\langle (0, 1, -1, 1) \rangle_{\mathbb{R}}$ & 1/2 & $-$\\
    $\{x_2 = -x_4\}$ & $\langle (2, -1, 3, 1) \rangle_{\mathbb{R}}$ & 1/4 & $ \{1/4, 3/4\}$\\
    $\{x_3 = x_4\}$ & $\cup_{\ell = 0}^1 \langle (1, 0, 1, 1) \rangle_{\mathbb{R}} + (\ell/2, \ell/2, 0, 0)$ & 1/4 & $ 1: \{1/4, 3/4\}$\\
    $\{x_3 = -x_4\}$ & $\cup_{\ell = 0}^1 \langle (0, 1, -1, 1) \rangle_{\mathbb{R}} + (\ell/2, 0, \ell/2, \ell/2)$ & 1/4 & $ 1: \{1/4, 3/4\}$\\
    \bottomrule
  \end{tabular}
\end{figure}

%We claim that for each $(i,j,\epsilon,\ell)$ with $D(U_{i,j,\epsilon,\ell})=1/4$, the pushforward by $\psi_{i,j,\epsilon}$ of $D|_{U_{i,j,\epsilon,\ell}}$ is an even function on $\mathbb{R}/\mathbb{Z}$.  This is immediate whenever $\ell=0$, and it also follows quickly for the remaining cases because the shifts $(1/2,1/2,0,0)$ and $(1/2,0,1/2,1/2)$ are both $2$-torsion points (and hence fixed by the map $x \mapsto -x$).  It follows that for each $(i,j,\epsilon,\ell)$ under consideration, it suffices to look at the behaviour only around $\tau=1/4$, since the behavior around $\tau=3/4$ is identical.

%the restriction of $D$ to $U_{i, j, \epsilon, \ell}$ is an even function. In the cases that $\ell = 0$, $T_{i, j, \epsilon, \ell}$ is invariant under the map $x \mapsto -x$. In the cases where $\ell = 1$, the same behavior is present since we are adding a shift where each coordinate of the shift is equal to its negative. By this symmetry, we only have to compute one of these approximations. 

Let us write out some of the details for approximating $\tau=1/4$ in $U_{3, 4, +, 1}$; the other cases are similar to this case or to what we demonstrated in the previous subsection.
%Since we saw how to approximate $\tau$ when $U_{i, j, \epsilon}$ has a single identity component, we will do a sample computation for $T_{3, 4, +, 1}$. 
The isomorphism $\psi = \psi_{3, 4, +}$ is given by $\psi(1, 0, 1, 1) = (1, 0)$ and $\psi(1, 1, 0, 2) = (0, 1)$, and $\psi(T) = \langle (A, B) \rangle_\mathbb{R}$. Following  Proposition~\ref{prop:cyclic-subgroup-parameters}, we have \[\psi(T_{3, 4, +, 1}) = \left( \frac{a}{2|B|} + \left\langle \frac{1}{|B|} \right\rangle_{\mathbb{Z}} \right) \times \left\{ \frac{1}{2} \right\},\] where $a \equiv A \pmod{2}.$  (Here $a$ is independent of the sign of $B$ since $A \equiv -A \pmod{2}$.)  Lemma~\ref{lem:approximating-by-1/q} tells us that the best approximations of $\tau = 1/4$ by elements of $\frac{a}{2|B|} + \left\langle \frac{1}{|B|} \right\rangle_{\mathbb{Z}}$ 
have errors 
%\[\Approx^-\left(\frac{1}{4}, \frac{a}{2}; |B|\right) = \frac{R^-}{8 |B|}, \quad R^- \equiv 2|B| - 4a \pmod{8}, \quad a \equiv A \pmod{2},\]
%\[\Approx^+\left(\frac{1}{4}, \frac{a}{2}; |B|\right) = \frac{R^+}{8 |B|}, \quad R^+ \equiv 4a - 2|B| \pmod{8}, \quad a \equiv A \pmod{2},\]
%where $R^+,R^- \in \{0,1,2,3,4,5,6,7\}$.
%Since every coefficient on the right side of the equivalences $R^- \equiv 2|B| - 4a \pmod{8}$ and $R^+ \equiv 4a - 2|B| \pmod{8}$ contains a factor of two, we can divide all the coefficients by two and reduce the denominator of the approximation by two. We notice $2a \equiv 2A \pmod{4}$, so we can replace $2a$ with $2A$. In total, we have for $R^+, R^- \in \{0, 1, 2, 3\}$ 
\[\Approx^-\left(\frac{1}{4}, \frac{a}{2}; |B|\right) = \frac{R^-}{4 |B|}, \quad R^- \equiv |B| - 2A \pmod{4},\] \[\Approx^+\left(\frac{1}{4}, \frac{a}{2}; |B|\right) = \frac{R^+}{4 |B|}, \quad R^+ \equiv 2A - |B| \pmod{4},\] 
where $R^+,R^- \in \{0,1,2,3\}$. 
% \NK{(I commented out the stuff about removing the factor of $2$.  If the reader goes through the argument of Lemma 2.4 directly, then they will arrive at the ``simplified'' formula. If they plug in the formulas of Lemma 2.4, then they will immediately see that they can get rid of the factor of $2$.)}
Finally, to compute the slopes $\lambda^+$ and $\lambda^-$, we write $D_{3,4,+,1}(t) = D(t(1, 0, 1, 1) + (1/2, 1/2, 0, 0))$ and compute
\[D_{3,4,+,1}(1/4 + \varepsilon) = D((3/4 + \varepsilon, 1/2, 1/4 + \varepsilon, 1/4 + \varepsilon)) = 1/4 + \max\{-\varepsilon, \varepsilon\}\]
for small $\varepsilon$.
%The function $f$ achieves its minimum at $\tau = 1/4$ (and $\tau = 3/4$), so we compute \[f(1/4 + \varepsilon) = D((3/4 + \varepsilon, 1/2, 1/4 + \varepsilon, 1/4 + \varepsilon)) = 1/4 + \max\{-\varepsilon, \varepsilon\}.\] Therefore, the rates of change are given by 
Thus $\lambda^+=\lambda^- = 1$.  See Figure~\ref{tab:approx-1/4-1011-1102} for these calculations for all quadruples $(i,j,\epsilon,\ell)$.

\begin{figure}
  \centering
  \caption{Approximations to $\tau = 1/4$ for $U^2$. In the fifth row, $a \equiv A \pmod{2}$, and in the sixth row, $a \equiv B \pmod{2}$. The values of $R^+,R^-$ lie in $\{0,1,2,3\}$.  The headings for the last four columns indicate the sign of $\delta$ in parentheses.}
  \label{tab:approx-1/4-1011-1102}
  \begin{tabular}{llllllll}
    \toprule
    $i, j, \epsilon, \ell$ & $\psi(T_{i, j, \epsilon, \ell})$ & $\lambda^+$ & $\lambda^-$ & $R^+$ ($+$) & $R^-$ ($+$)& $R^+$ ($-$) & $R^-$ ($-$)\\
    \midrule \vspace{1mm}
    $1, 3, -, 0$ & $\langle \frac{1}{|2A+B|} \rangle_\mathbb{Z} \times \{0\}$ & 3 & 1 & $-2A-B$ & $2A+B$ & $2A+B$ & $-2A-B$\\ \vspace{1mm}
    $1, 4, -, 0$ & $\langle \frac{1}{|2A+3B|} \rangle_\mathbb{Z} \times \{0\}$ & 3 & 1 & $2A+B$ & $2A+3B$ & $2A+3B$ & $2A+B$\\ \vspace{1mm}
    $2, 3, +, 0$ & $\langle \frac{1}{|B-A|} \rangle_\mathbb{Z} \times \{0\}$ & 3 & 1 & $A-B$ & $B-A$ & $B-A$ & $A-B$\\ \vspace{1mm}
    $2, 4, -, 0$ & $\langle \frac{1}{|A+3B|} \rangle_\mathbb{Z} \times \{0\}$ & 3 & 1 & $B+3A$ & $A+3B$ & $A+3B$ & $B+3A$\\ \vspace{1mm}
    $3, 4, +, 1$ & $(\frac{a}{2|B|} + \langle \frac{1}{|B|} \rangle_\mathbb{Z}) \times \{\frac{1}{2}\}$ & 1 & 1 & $2A-B$ & $B-2A$ & $B-2A$ & $2A-B$\\ \vspace{1mm}
    $3, 4, -, 1$ & $(\frac{a}{2|A + B|} + \langle \frac{1}{|A+B|} \rangle_\mathbb{Z}) \times \{\frac{1}{2}\}$ & 1 & 1 & $B-A$ & $A-B$ & $A-B$ & $B-A$\\
    \bottomrule
  \end{tabular}
\end{figure}

%The values $\Approx^+$ and $\Approx^-$ indicate how far the approximations deviate from the $\tau$ that achieves the minimum $D$ value within each restricted subgroup. To get the value of $D(T_{i, j, \epsilon, \ell})$, we multiply $\Approx^+$ by $\lambda^+$ and $\Approx^-$ by $\lambda^-$ respectively, take the minimum of the two, and add the resulting term to $1/4$: 
For each $(i,j,\epsilon,\ell)$, we partition $\mathbb{R}_{\geq 0} \times \mathbb{R}$ into two sectors, corresponding to the sign of the expression in the absolute value in the second column of Figure~\ref{tab:approx-1/4-1011-1102}; we use $\delta_{i,j,\epsilon,\ell} \in \{+,-\}$ to denote this sign.  The common refinement of these partitions gives a partition of $\mathbb{R}_{\geq 0} \times \mathbb{R}$ into several sectors, each of which must be analyzed separately.

For example, consider the sector $\{(x,y):0 \leq x \leq y\}$, corresponding to the case where all of the $\delta_{i,j,\epsilon,\ell}$'s are $+$.  For each choice of residues $(A,B) \pmod{4}$ and each quadruple $(i,j,\epsilon,\ell)$, we have
$$D(T_{i,j,\epsilon,\ell})=1/4+\min \{\lambda^+ \Approx^+,\lambda^- \Approx^-\},$$
and this quantity equals $1/4+\gamma/q$, where $q=|T_{i,j,\epsilon,\ell}|$ is a linear combination of $A,B$. Figure~\ref{fig:six_tables} shows the outcome of this calculation for all six quadruples $(i,j,\epsilon,\ell)$ and the various choices of residues $(A,B) \pmod{4}$ (all in the sector $\{(x,y): 0 \leq x \leq y\}$).

For each choice of residues $(A,B) \pmod{4}$, we now compare the quantities $D(T_{i,j,\epsilon,\ell})$, which will make us further partition our sector $\{(x,y)):0 \leq x \leq y\}$.  Here we will illustrate this procedure in the case $(A,B) \equiv (1,3) \pmod{4}$.
%Since the second inequality gives $B > A \geq 0$, all $\delta = 1$, so we can use the coefficients computed in Figure \ref{fig:six_tables}. 
We wish to find the minimum among the quantities \[\frac{1}{2A + B}, \quad \frac{3}{2A + 3B}, \quad \frac{2}{B - A}, \quad \frac{2}{A + 3B}, \quad \frac{1}{B}, \quad \frac{2}{A + B}.\] 
Since $0 \leq A \leq B$, the first of these quantities is always smaller than or equal to the second, third, fifth, and sixth.  It remains to compare $1/(2A+B)$ and $2/(A+3B)$: The former is smaller when $B<3A$, and the latter is smaller when $B>3A$, so we accordingly use the slope $3$-ray through the origin to subdivide the sector $\{(x,y): 0 \leq x \leq y\}$ into two sectors.  See Figure~\ref{tab:sectors-best-approx-u2} for the outcomes of these calculations for other choices of the $\delta_{i,j,\epsilon,\ell}$'s and pairs of residues $(A,B) \pmod{4}$, and see Figure~\ref{fig:sector-decomp-picture} for a visual representation of our running example $(A,B) \equiv (1,3) \pmod{4}$.

\begin{figure}[!htb]
    \centering
    \caption{The values of $4\gamma$ in the sector $\{0 \leq A \leq B\}$ for $U^2$.  Each grid represents a quadruple $(i,j,\epsilon,\ell)$.  For each, we show how $\gamma$ depends on $(A,B) \pmod{4}$; note that we have excluded the cases where $A,B$ are both even, due to our standing assumption that $A,B$ are coprime.  For the reader's convenience, we have reiterated the value of $q=|T_{i,j,\epsilon,\ell}|$ in each case.
    %, where $\gamma_Q$ is given by computing the optimal $D$ value on each $T_{i, j, \epsilon, \ell}$, calculating the difference of that optimal with $1/4$, and multiplying the difference by $q$. In our case: $D(T_{i, j, \epsilon, \ell}) = 1/4 + \gamma_Q/q$. The denominator $q$ is given below each table. The row and column headers denote the congruence class of $A$ and $B$. Since $A$ and $B$ are coprime, the dashes represent modulo constraints where this condition is not met.
    }
    \label{fig:six_tables}
    \begin{subfigure}[b]{0.4\textwidth}
        \centering
        \begin{tabular}{|c|c|c|c|c|}
        \hline
        \backslashbox{$B$}{$A$} & \textbf{0} & \textbf{1} & \textbf{2} & \textbf{3} \\ \hline
        \textbf{0} & - & 2 & - & 2\\
        \hline
        \textbf{1} & 1 & 3 & 1 & 3\\
        \hline
        \textbf{2} & - & 0 & - & 0\\
        \hline
        \textbf{3} & 3 & 1 & 3 & 1\\
        \hline
        \end{tabular}
        \caption{$(1, 3, -, 0)$ and $q = 2A + B$.}
        \label{tab:table1}
    \end{subfigure}
    \quad
    \begin{subfigure}[b]{0.4\textwidth}
        \centering
        \begin{tabular}{|c|c|c|c|c|}
        \hline
        \backslashbox{$B$}{$A$} & \textbf{0} & \textbf{1} & \textbf{2} & \textbf{3} \\
        \hline
        \textbf{0} & - & 2 & - & 2 \\
        \hline
        \textbf{1} & 3 & 1 & 3 & 1 \\
        \hline
        \textbf{2} & - & 0 & - & 0 \\
        \hline
        \textbf{3} & 1 & 3 & 1 & 3 \\
        \hline
        \end{tabular}
        \caption{$(1, 4, -, 0)$ and $q = 2A + 3B$.}
    \end{subfigure}
    \\
    \begin{subfigure}[b]{0.4\textwidth}
        \centering
        \begin{tabular}{|c|c|c|c|c|}
        \hline
        \backslashbox{$B$}{$A$} & \textbf{0} & \textbf{1} & \textbf{2} & \textbf{3} \\
        \hline
        \textbf{0} & - & 3 & - & 1  \\
        \hline
        \textbf{1} & 1 & 0 & 3 & 2 \\
        \hline
        \textbf{2} & - & 1 & - & 3 \\
        \hline
        \textbf{3} & 3 & 2 & 1 & 0 \\
        \hline
        \end{tabular}
        \caption{$(2, 3, +, 0)$ and $q = B-A$.}
    \end{subfigure}
    \quad
    \begin{subfigure}[b]{0.4\textwidth}
        \centering
        \begin{tabular}{|c|c|c|c|c|}
        \hline
        \backslashbox{$B$}{$A$} & \textbf{0} & \textbf{1} & \textbf{2} & \textbf{3} \\
        \hline
        \textbf{0} & - & 1 & - & 3  \\
        \hline
        \textbf{1} & 3 & 0 & 1 & 2 \\
        \hline
        \textbf{2} & - & 3 & - & 1 \\
        \hline
        \textbf{3} & 1 & 2 & 3 & 0 \\
        \hline
        \end{tabular}
        \caption{$(2, 4, -, 0)$ and $q = A + 3B$.}
    \end{subfigure}
    \\
    \begin{subfigure}[b]{0.4\textwidth}
        \centering
        \begin{tabular}{|c|c|c|c|c|}
        \hline
        \backslashbox{$B$}{$A$} & \textbf{0} & \textbf{1} & \textbf{2} & \textbf{3} \\
        \hline
        \textbf{0} & - & 2 & - & 2  \\
        \hline
        \textbf{1} & 1 & 1 & 1 & 1 \\
        \hline
        \textbf{2} & - & 0 & - & 0 \\
        \hline
        \textbf{3} & 1 & 1 & 1 & 1 \\
        \hline
        \end{tabular}
        \caption{$(3, 4, +, 1)$ and $q = B$.}
    \end{subfigure}
    \quad
    \begin{subfigure}[b]{0.4\textwidth}
        \centering
        \begin{tabular}{|c|c|c|c|c|}
        \hline
        \backslashbox{$B$}{$A$} & \textbf{0} & \textbf{1} & \textbf{2} & \textbf{3} \\
        \hline
        \textbf{0} & - & 1 & - & 1  \\
        \hline
        \textbf{1} & 1 & 0 & 1 & 2 \\
        \hline
        \textbf{2} & - & 1 & - & 1 \\
        \hline
        \textbf{3} & 1 & 2 & 1 & 0 \\
        \hline
        \end{tabular}
        \caption{$(3, 4, -, 1)$ and $q = A + B$.}
    \end{subfigure}
\end{figure}

%We notice that $1/(2A + B)$ is always smaller than $3/(2A + 3B)$, $2/(B-A), 1/B$, and $2/(A+B)$ on the sector where $B > A > 0$. Therefore, we only have to compare $1/(2A + B)$ to $2/(A + 3B)$. Setting these equal, we get \[1/(2A + B) = 2/(A + 3B) \Longleftrightarrow B = 3A.\] In particular, if $B > 3A$, then $2/(A + 3B)$ is smaller, and if $B < 3A$, then $1/(2A + B)$ is smaller. Therefore, the sector $B > A > 0$ gets further divided into two sectors along the ray $B > 3A$. For each congruence class and each sector given by the change delta for each $(i, j, \epsilon, \ell)$, we must compare which approximation gets the closest to $1/4$. Since there are so many cases to consider, we wrote a script which outputted the best approximation to $1/4$ for each sector and each congruence class, and the result is contained in Figure \ref{tab:sectors-best-approx-u2}.

\begin{figure}
  \centering
  \caption{$D$-values for sectors for $U^2$.  Each row corresponds to a pair of residues $(A,B) \pmod{4}$, as recorded in the first column.  The second column gives the slopes of the dividing rays for the corresponding sector decomposition of $\mathbb{R}_{\geq 0} \times \mathbb{R}$. In the third column we record (proceeding through the sectors clockwise) the corresponding values of the quantity $4(D(T)-1/4)$, which we term the ``offset'' for brevity.  The last column shows the values achieved by the offsets for coprime $A,B$. %each congruence class. The first column gives the modularity conditions placed on $A$ and $B$. The second column divides the half-plane where $B \geq 0$ into sectors. Each sector starts from the ray $A = 0$ when $B \geq 0$ and ends at the ray $A = 0$ when $B \leq 0$. The fractions denote the dividing rays $B = c A$, where $c$ is the fraction and the ray is from origin along the line where $A \geq 0$. For each sector, there is a corresponding $D$ value. A sector corresponds to two consecutive elements in the list given. The third column corresponds to the $D$ value for each sector. The $D$ value is equal to $1/4$ plus $1/4$ times the fraction given.
 }
  \label{tab:sectors-best-approx-u2}
  \begin{tabular}{llll}
    \toprule
    $(A, B)$ & Slopes of dividing rays & Offsets & Scaled progressions\\
    \midrule
    $(0, 1)$ & $-3/4$ & $\frac{1}{2A + B}, \frac{1}{-A - 3B}$ & $\frac{1}{4t + 1}, \frac{1}{4t + 1}$\\
    $(0, 3)$ & $-1/4$ & $\frac{1}{2A + 3B}, \frac{1}{A - B}$ & $\frac{1}{4t + 1}, \frac{1}{4t + 1}$\\
    $(1, 0)$ & $0, -4$ & $\frac{1}{A + 3B}, \frac{1}{A - B}, \frac{2}{-2A - 3B}$ & $\frac{1}{4t + 1}, \frac{1}{4t + 1}, \frac{1}{2t + 1}$\\
    $(1, 1)$ & (none) & $0$ & $-$\\
    $(1, 2)$ & (none) & $0$ & $-$\\
    $(1, 3)$ & $3, -1$ & $\frac{2}{A + 3B}, \frac{1}{2A + B}, \frac{1}{-2A - 3B}$ & $\frac{1}{2t + 1}, \frac{1}{4t + 1}, \frac{1}{4t + 1}$\\
    $(2, 1)$ & $1/2, -1/2, -3/2$& $\frac{1}{A + 3B}, \frac{1}{2A + B}, \frac{1}{A - B}, \frac{1}{-2A - 3B}$ & $\frac{1}{4t + 1}, \frac{1}{4t + 1}, \frac{1}{4t + 1}, \frac{1}{4t + 1}$\\
    $(2, 3)$ & $-1/2$ & $\frac{1}{2A + 3B}, \frac{1}{-A - 3B}$ & $\frac{1}{4t + 1}, \frac{1}{4t + 1}$\\
    $(3, 0)$ & $0, -4/7$ & $\frac{2}{2A + 3B}, \frac{2}{2A + B}, \frac{1}{-A - 3B}$ & $\frac{1}{2t + 1}, \frac{1}{2t + 1}, \frac{1}{4t + 1}$\\
    $(3, 1)$ & $-3/7, -1$& $\frac{1}{2A + 3B}, \frac{2}{A - B}, \frac{2}{-A - 3B}$ & $\frac{1}{4t + 1}, \frac{1}{2t + 1}, \frac{1}{2t + 1}$\\
    $(3, 2)$ & (none) & $0$ & $-$\\
    $(3, 3)$ & (none) & $0$ & $-$\\
    \bottomrule
  \end{tabular}
\end{figure}

\begin{figure}
    \centering
    \begin{tikzpicture}
    % Fill sectors
    \fill[gray!10] (0,0) -- (0,3) -- (1,3) -- cycle;
    \fill[blue!10] (0,0) -- (2.5,-2.5) -- (0,-2.5) -- cycle;

    % Axes
    \draw[thick,->] (-0.5,0) -- (4,0) node[right] {$A$}; % A-axis (x-axis)
    \draw[thick,<->] (0,-2.5) -- (0,3) node[above] {$B$}; % B-axis (y-axis)

    % Rays
    \draw[thick] (0,0) -- (1,3) node[below right] {$B = 3A$}; 
    \draw[thick] (0,0) -- (2.5,-2.5) node[above right] {$B = -A$};
    
    % Labels
    \node at (0.45,2.7) {$\frac{1}{2t + 1}$};
    \node at (2,0.7) {$\frac{1}{4t + 1}$};
    \node at (0.8,-1.5) {$\frac{1}{4t + 1}$}; 
    \end{tikzpicture}
    \caption{This figure illustrates the sector decomposition for $U^2$ when $(A, B) \equiv (1, 3) \pmod{4}$. Each sector is labeled with the progression of its corresponding offset (see Figure \ref{tab:sectors-best-approx-u2}).}
    \label{fig:sector-decomp-picture}
\end{figure}

For each entry in the third column of Figure~\ref{tab:sectors-best-approx-u2}, we must determine which values are assumed by the denominator as $(A,B)$ ranges over pairs of coprime integers satisfying the modular conditions from the first column and the linear inequalities from the second column.  To illustrate this step, consider the sector $\{(x,y): 0 \leq 3x \leq y\}$ with $(A,B) \equiv (1,3) \pmod{4}$, which corresponds to the $D$-value offset $2/(A+3B)$.  Let us parameterize $A,B$ as $A=4r+1$, $B=4s+3$, so that $A+3B=4r+12s+10$.  This last expression is always equivalent to $2$ modulo $4$, so we wish to determine the set of integers $t$ such that there are valid choices of $r,s$ (in the sense that $A,B$ are coprime and satisfy $B \geq 3A \geq 0$) with $$A+3B=4r+12s+10=4t+2,$$ i.e., $r+3s+2=t$.  Such pairs $(r,s)$ can in turn be parameterized as
$$(r,s)=(t-2+3x,-x)$$
for $x$ an integer in a suitable interval (of length growing with $t$), and this leads to
$$(A,B)=(4t-7+12x,-4x+3).$$
Lemma~\ref{lem:coprime} tells us that since $\gcd(4t-7,12,-4,3)=1$, there are many choices of $x$ making $(A,B)$ coprime (with $B \geq 3A \geq 0$) when $t$ is large.  In particular, every sufficiently large value of $t$ is achieved (even if one insists on throwing out exceptional half-lines, which is unnecessary in light of Section~\ref{sec:sectors-and-lines}), and the expression $2/(A+3B)$ assumes the values \[2/(4t+2)=1/(2t+1).\]  The last column of Figure~\ref{tab:sectors-best-approx-u2} shows the outcome of the analogous calculations for the other entries in the third column of the table.  Since $\{4t+1: t \in \mathbb{N}\} \subseteq \{2t+1: t \in \mathbb{N}\}$, we conclude that $\mathcal{S}_1(U^2)$ has finite symmetric difference with the single progression $1/4 + 1/4\Prog(2, 3)$.  This proves Proposition~\ref{prop:U_2-calculation}.

Let us return to the progression \eqref{eq:alec-alex} found by Fan and Sun~\cite{computer}.  This family of $T$'s corresponds to the case $A+4s+3$, $B=8$ in the setting currently under consideration.  
% One can check \NK{(have we actually checked this?  need to look at $U_{3,4,+,1}$)} that $ \mathbb{R}_{\geq 0} \times \{8\}$ does not coincide with an exceptional half-line.  
In the $(A,B) \equiv (3,0) \pmod{4}$ row of Figure~\ref{tab:sectors-best-approx-u2}, these choices of $(A,B)$ fall in the first sector (and do not coincide with an exceptional half-line).  Since $A,B$ are coprime, we see that indeed $$D(T)=\frac{1}{4}+\frac{2}{4(2A+3B)}=\frac{1}{4}+\frac{2}{4(2(4s+3)+3(8))}=\frac{1}{4}+\frac{1}{16s+60};$$
this provides a systematic way of carrying out the calculation of \eqref{eq:alec-alex}. 

\section{The spectrum $\mathcal{S}_1(3)$ to the second accumulation point}\label{sec:explicit-comp-s_1(3)}
We will now prove Theorem~\ref{thm:S_1(3)-second-acc-point}, which states that $\mathcal{S}_1(3) \cap (1/10, 1/6]$ has finite symmetric difference with the union of $1/10 + 4/5\Prog(5, 7)$ and $1/10 + 3/5\Prog(5, 9)$.  Recall that \cite{thesis} determined $\mathcal{S}_1(3)$ up to the first accumulation point, viz., $\mathcal{S}_1(3) \cap (1/6, 1/2]=1/6 + 1/6\Prog(6, 7)$.  Theorem~\ref{thm:S_1(3)-second-acc-point} determines $\mathcal{S}_1(3)$ up to the second accumulation point ($1/10$), up to finitely many exceptions.  We reiterate (see the comment at the beginning of Section~\ref{sec:u-to-check-1/10}) that our method here would let us compute $\mathcal{S}_1(3) \cap (\varepsilon,1/2]$, up to finitely many exceptions, for any $\varepsilon>0$: We can compute $\mathcal{S}_1(3)$ up to as many accumulation points as we want, and these accumulation points approach $0$.

In Section~\ref{sec:u-to-check-1/10}, we showed that, up to symmetry, the only $2$-dimensional proper subtori $U \subseteq (\mathbb{R}/\mathbb{Z})^3$ with $D(U) = 1/10$ are \begin{align*}
    U^3= \langle (0, 1, 4), (1, 0, 0) \rangle_\mathbb{R}, &\quad U^4= \langle (0, 2, 3), (1, 0, 0) \rangle_\mathbb{R},\\ 
    U^5= \langle (0, 1, 3), (1, 0, 1) \rangle_\mathbb{R}, &\quad  U^6= \langle (0, 1, 2), (1, 1, 0) \rangle_\mathbb{R}.
\end{align*} 
Notice that each of these choices of generators satisfies the condition \eqref{eq:lattice-condition}.

All that remains for the proof of Theorem~\ref{thm:S_1(3)-second-acc-point} is computing $S_1(U^i)$ for $i \in \{3, 4, 5, 6\}$ individually.  
%It is shown in \cite{thesis} that $\mathcal{S}_1(3) \cap (1/6, 1/2]$ equals $1/6 + 1/\Prog(36, 6)$. The first novel result is identifying all the infinite families of $D$ values that converge to $1/10$. Since subsection \ref{sec:u-to-check-1/10} shows how to enumerate all two-dimensional tori $U$ with $D(U) = 1/(2(2n + 1))$, the methods here can be employed to compute infinite families of progressions up to any accumulation point in $S_1(3)$.
The argument is similar to the argument in Section~\ref{sec:explicit-comp-s1(4)}. In each of the first two subsections, we will be able to quickly dispose of all but finitely many exceptional lines; in each of the last two subsections, we will deal with full sector decompositions of $\mathbb{R}_{\geq 0} \times \mathbb{R}$. One topically new (but unimportant) feature is that we have different values of $\tau$ for different tuples $(i, j, \epsilon,\ell)$. We will abbreviate parts of the exposition that are similar to Section~\ref{sec:explicit-comp-s1(4)}. 

\subsection{Computing $\mathcal{S}_1(U^3)$}

\begin{figure}
  \centering
  \caption{Intersections of $U^3$ with the subspaces $\{x_i = \epsilon x_j\}$.}
  \label{tab:intersection-planes-u1-s13}
  \begin{tabular}{llll}
    \toprule
    Subspace & $U_{i, j, \epsilon}$ & $D(U_{i, j, \epsilon})$ & $\tau$ achieving $D(U_{i, j, \epsilon})$\\
    \midrule
    $\{x_1 = x_2\}$ & $\langle(1, 1, 4)\rangle_{\mathbb{R}}$ & 1/10 & $\{2/5, 3/5\}$ \\
    $\{x_1 = -x_2\}$ & $ \langle(-1, 1, 4)\rangle_{\mathbb{R}}$ & 1/10 & $\{2/5, 3/5\}$\\
    $\{x_1 = x_3\}$ & $\langle (4, 1, 4) \rangle_{\mathbb{R}}$ & 1/10 & $\{2/5, 3/5\}$\\
    $\{x_1 = -x_3\}$ & $\langle (-4, 1, 4) \rangle_{\mathbb{R}}$ & 1/10 & $\{2/5, 3/5\}$\\
    $\{x_2 = x_3\}$ & $\cup_{\ell = 0}^2 \langle (1, 0, 0) \rangle_{\mathbb{R}} + (0, \ell/3, \ell/3)$ & 1/6 & $-$\\
    $\{x_2 = -x_3\}$ & $\cup_{\ell = 0}^4 \langle (1, 0, 0) \rangle_{\mathbb{R}} + (0, \ell/5, 4\ell/5)$ & 1/10 & $2, 3: [2/5, 3/5]$\\
    \bottomrule
  \end{tabular}
\end{figure}

We parametrize $1$-dimensional subtori of $U^3$ as \[T = \langle A(0, 1, 4) + B(1, 0, 0) \rangle_\mathbb{R}\] for coprime integers $A$, $B$ with $B \geq 0$. We compute the intersections $U_{i, j, \epsilon} = U^3 \cap \{x_i = \epsilon x_j\}$ in Figure \ref{tab:intersection-planes-u1-s13}. Note that on $U_{2,3,-}$, the $D$-value $1/10$ is achieved on intervals of positive length; we will use this to restrict our attention to a small number of exceptional half-lines.
%We leverage the fact that the $D$ value is achieved on an interval of positive length on $U_{2, 3, -}$ to reduce to the case that $|A| < 5$ and $B$ satisfies specific modular constraints. After computing an isomorphism 

Consider the isomorphism $\psi = \psi_{2, 3, -}$ given by  $\psi((1, 0, 0)) = (1, 0)$ and $\psi((0, 1, 4)) = (0, 1)$.  Then Proposition~\ref{prop:cyclic-subgroup-parameters} gives \[\psi(T_{2, 3, -}) = \bigcup_{\ell = 0}^{4} \left(\left(\frac{a \ell}{(5|A|)} + \left\langle\frac{1}{|A|}\right\rangle_{\mathbb{Z}}\right) \times \left\{\frac{\ell}{5} \right\}\right),\] where $\delta$ is the sign of $A$ and $a \equiv \delta B \pmod{5}$.  One can check $D_{2,3,-}$ achieves the constant values $1/10$ on the set $[2/5, 3/5] \times \{2/5, 3/5\}$. If $|A| \geq 5$, then $\psi(T_{2, 3, -})$ certainly intersects $[2/5, 3/5] \times \{2/5, 3/5\}$, which immediately implies that $D(T) = 1/10$. It remains only to consider $|A|\leq 4$. 

For each value of \(m:= |A|\), there are several equivalence classes of \( B \) modulo \( 5m \) that give $D(T)=1/10$ and hence can be disregarded. For $m=1$, for instance, this occurs exactly when $2a/5 \in [2/5,3/5]$ (or, equivalently, $3a/5 \in [2/5,3/5]$), i.e., $a=\delta B \equiv 1,4 \pmod{5}$; so it remains only to consider $B \equiv 0,2,3 \pmod{5}$.  Likewise, for $m=2$, we can ignore the cases where $$2a/10=a/5 \in [2/5/3/5] \cup (1/2+[2/5,3/5]);$$ after also accounting for the condition that $A,B$ are coprime (i.e., $B$ is odd), we are left with checking only $B \equiv 1,9 \pmod{10}$.  Similarly, for $m=3$, we need to consider only $B \equiv 5,10 \pmod{15}$, and we can completely ignore the case $m=4$.

%For instance, when \( |A| = 1 \), we want either \( 2a/5 \) or \( 3a/5 \) to lie within the interval \( [2/5, 3/5] \). This requirement implies \( a \equiv \delta B \equiv 1 \) or \( 4 \pmod{5} \). It remains to check \( B \equiv 0, 2, 3 \pmod{5} \), where we can ignore \( \delta \) because the set \( \{0, 2, 3\} \) remains fixed under negation modulo five. For \( |A| = 2 \), we want the first coordinate of \( \psi(T_{2, 3, -}) \) on the \( \ell = 2, 3 \) components to intersect \( [2/5, 3/5] \). Equivalently, if there exists an \( r \in \{0, 1\} \) such that \( 2a/10 + r/2 \) or \( 3a/10 + r/2 \) lies within \( [2/5, 3/5] \), we know that \( D(T) \) will equal \( 1/10 \). After also requiring that \( B \) be coprime to \( A \), we find that we only need to check \( B \equiv 1, 9 \pmod{10} \). For $|A| = 3$, we need to check $B \equiv 5, 10 \pmod{15}$, and for $|A| = 4$, there are no cases to check. 

Each of the remaining cases from the previous paragraph corresponds to an exceptional line; as \( B \) increases, the parameter \( q = |A| \) remains fixed. By symmetry (see Section~\ref{sec:other-symmetries}), it suffices to study the behavior around the point \( \tau = 2/5 \) for each tuple \( (i, j, \epsilon) \neq (2, 3, -) \) with \( D(U_{i, j, \epsilon}) = 1/10 \); see Figure \ref{tab:approx-2/5-100-014}. All of our exceptional lines eventually lie within the sector containing the positive \( B \)-axis, which corresponds to the case where every \( \delta_{i, j, \epsilon} = + \).

\begin{figure}
  \centering
  \caption{Approximations to $\tau = 2/5$ for $U^3$. The values of $R^+,R^-$ lie in $\{0,1,2,3,4\}$. We treat only the sector where every $\delta = +$, as this is the region containing all of the exceptional half-lines of interest.}
  \label{tab:approx-2/5-100-014}
  \begin{tabular}{llllll}
    \toprule
    $i, j, \epsilon, \ell$ & $\psi(T_{i, j, \epsilon, \ell})$ & $\lambda^+$ & $\lambda^-$ & $R^+$ ($+$) & $R^-$ ($+$)\\
    \midrule \vspace{1mm}
    $1, 2, +, 0$ & $\langle \frac{1}{|B-A|} \rangle_{\mathbb{Z}} \times \{0\}$ & 4 & 1 & $3(B-A)$ & $2(B-A)$\\ \vspace{1mm}
    $1, 2, -, 0$ & $\langle \frac{1}{|B+A|} \rangle_{\mathbb{Z}} \times \{0\}$ & 4 & 1 & $3(B+A)$ & $2(B+A)$\\ \vspace{1mm}
    $1, 3, +, 0$ & $\langle \frac{1}{|B-4A|} \rangle_{\mathbb{Z}} \times \{0\}$ & 4 & 1 & $3(B-4A)$ & $2(B-4A)$\\ \vspace{1mm}
    $1, 3, -, 0$ & $\langle \frac{1}{|B+4A|} \rangle_{\mathbb{Z}} \times \{0\}$ & 4 & 1 & $3(B+4A)$ & $2(B+4A)$\\
    \bottomrule
  \end{tabular}
\end{figure}

For each \( (m, x) \in \{(1, 0), (1, 2), (1, 3), (2, 1), (2, 9), (3, 5), (3, 10)\} \), we consider pairs $(A,B)$ with \( |A| = m \) and \( B \equiv x \pmod{5m} \); after parametrizing $B=5|A|t+x$ (for $t \in \mathbb{N}$), we compute $D(T)$ as a function of $t$ in the usual way.  This results in several progressions in $\mathcal{S}_1(U^3)$, which we verify are contained in the progressions from the statement of Theorem~\ref{thm:S_1(3)-second-acc-point}.  Figure~\ref{tab:prog-sectors-100-014} presents the results of these calculations for the half-lines with \( A > 0 \); the half-lines with $A<0$ turn out to yield the same progressions, so we have omitted these calculations.

%, we compute the optimal approximation to \( 1/10 \) by setting
%\[D(T) = \frac{1}{10} + \min_{i, j, \epsilon} \{\lambda^+ \Approx^+, \lambda^- \Approx^-\}.\]
%Since \( A \) is fixed, we substitute its exact value and represent \( B \) as \( B = 5mt + s \) to obtain the progressions for each of these exceptional lines, as displayed in Figure~\ref{tab:prog-sectors-100-014}. Our goal is to demonstrate that each progression from the exceptional lines is contained within one of the two progressions given in Theorem \ref{thm:S_1(3)-second-acc-point}. We identify which progression each aligns with and establish the form of this containment by providing a change of variables that equates the main progression with the corresponding subprogression. Figure~\ref{tab:prog-sectors-100-014} presents the results of these calculations for the half-lines where \( A > 0 \). The calculations for the half-lines with \( A < 0 \) are omitted, as they yield the same progressions.

\begin{figure}
  \centering
  \caption{$D$-value progressions for $U^3$ with fixed $1 \leq A \leq 3$ and fixed residue class $B \equiv x \pmod{5A}$ ($x \in \{1,2,\ldots, 5A\}$). We parametrize $B=5At + x$ ($t \in \mathbb{N}$). The second column gives the corresponding progression $5(D(T)-1/10)$. These values (as $t$ ranges) form subprogressions of the two progressions appearing in the statement of Theorem~\ref{thm:S_1(3)-second-acc-point}; the last column provides the changes of variable witnessing these containments.}
\label{tab:prog-sectors-100-014}
  \begin{tabular}{llll}
    \toprule
    $(A, B)$ & Scaled progression & Theorem progression & Change of variable \\
    \midrule
    $A = 1, B \equiv 0 \pmod{5}$ & $2/(5t + 1)$ & $4/(5s + 2)$ & $s = 2t$ \\
    $A = 1, B \equiv 2 \pmod{5}$ & $1/(5t + 3)$ & $3/(5s + 4)$ & $s = 3t + 1$ \\
    $A = 1, B \equiv 3 \pmod{5}$ & $3/(5t + 4)$ & $3/(5s + 4)$ & $s = t$ \\
    $A = 2, B \equiv 1 \pmod{10}$ & $1/(10t + 3)$ & $3/(5s + 4)$ & $s = 6t + 1$ \\ 
    $A = 2, B \equiv 9 \pmod{10}$ & $2/(10t + 11)$ & $4/(5s + 2)$ & $s = 4t + 4$\\
    $A = 3, B \equiv 5 \pmod{15}$ & $1/(15t + 8)$ & $4/(5s + 2)$ & $s = 12t + 6$\\
    $A = 3, B \equiv 10 \pmod{15}$ & $1/(15t + 13)$ & $4/(5s + 2)$ & $s = 12t + 10$\\
    \bottomrule
  \end{tabular}
\end{figure}

% \subsection{Progressions coming from $\langle(0, 2, 3), (1, 0, 0) \rangle$}
\subsection{Computing $\mathcal{S}_1(U^4)$}
\begin{figure}
  \centering
  \caption{Intersections of $U^4$ with the subspaces $\{x_i = \epsilon x_j\}$.}
  \label{tab:intersection-planes-u4-s13}
  \begin{tabular}{llll}
    \toprule
    Subspace & \textbf{$U_{i, j \epsilon}$} & $D(U_{i, j, \epsilon})$ & $\tau$ achieving $1/10$\\
    \midrule
    $\{x_1 = x_2\}$ & $\langle(2, 2, 3)\rangle_{\mathbb{R}}$ & 1/10 & $\{1/5, 4/5\}$\\
    $\{x_1 = -x_2\}$ & $ \langle(-2, 2, 3)\rangle_{\mathbb{R}}$ & 1/10 & $\{1/5, 4/5\}$\\
    $\{x_1 = x_3\}$ & $\langle (3, 2, 3) \rangle_{\mathbb{R}}$ & 1/10 & $\{1/5, 4/5\}$\\
    $\{x_1 = -x_3\}$ & $\langle (-3, 2, 3) \rangle_{\mathbb{R}}$ & 1/10 & $\{1/5, 4/5\}$\\
    $\{x_2 = x_3\}$ & $\langle (1, 0, 0) \rangle_{\mathbb{R}}$ & 1/2 & $-$\\
    $\{x_2 = -x_3\}$ & $\cup_{i = 0}^4 \langle (1, 0, 0) \rangle_{\mathbb{R}} + (0, 2i/5, 3i/5)$ & 1/10 & $1, 4: [2/5, 3/5]$\\
    \bottomrule
  \end{tabular}
\end{figure}

We parametrize $1$-dimensional subtori of $U^4$ as \[T = \langle A(0, 2, 3) + B(1, 0, 0) \rangle_\mathbb{R}\] for coprime $A$, $B$ with $B \geq 0$. We compute the intersections $U_{i, j, \epsilon} = U^4 \cap \{x_i = \epsilon x_j\}$ in Figure \ref{tab:intersection-planes-u4-s13}. Since $U_{2, 3, -}$ achieves the $D$-value $1/10$ on intervals of positive length, we restrict our attention to a small number of exceptional half-lines, as in the previous subsection.  This analysis leaves us with the cases $|A|=1$ and $B \equiv 0,1,4 \pmod{5}$; $|A|=2$ and $B \equiv 3,7 \pmod{10}$; and $|A|=3$ and $B \equiv 5,10 \pmod{15}$.  For each case, we approximate the critical value $\tau=1/5$ for the remaining tuples $(i,j,\epsilon)$; see Figure~\ref{tab:approx-1/5-100-023}.

%Using the approach outlined in the previous section, we limit our consideration to cases where \( |A| \leq 4 \). For \( |A| = 1 \), we examine cases with \( B \equiv 0, 1, 4 \pmod{5} \). When \( |A| = 2 \), we check \( B \equiv 3, 7 \pmod{10} \), taking into account that \( B \) must be coprime to \( A \). For \( |A| = 3 \), we verify \( B \equiv 5, 10 \pmod{15} \), and for \( |A| = 4 \), there are no cases to check. For these exceptional lines, we calculate the \( D \)-value offset by setting up to approximate the optimal \( \tau \) on subgroups \( T_{i, j, \epsilon} \) for \( (i, j, \epsilon) \neq (2, 3, -) \) with \( D(U_{i, j, \epsilon}) = 1/10 \). By symmetry, we only approximate \( \tau = 1/5 \); the relevant computations are presented in Figure \ref{tab:approx-1/5-100-023}.

\begin{figure}
  \centering
  \caption{Approximations to $\tau = 1/5$ for $U^4$. The values of $R^+,R^-$ lie in $\{0,1,2,3,4\}$. We consider the case where every $\delta = +$ as that corresponds to the region of interest.}
  \label{tab:approx-1/5-100-023}
  \begin{tabular}{llllll}
    \toprule
    $i, j, \epsilon, \ell$ & $\psi(T_{i, j, \epsilon, \ell})$ & $\lambda^+$ & $\lambda^-$ & $R^+$ ($+$) & $R^-$ ($+$)\\
    \midrule \vspace{1mm}
    $1, 2, +, 0$ & $\langle \frac{1}{|B-2A|} \rangle_{\mathbb{Z}} \times \{0\}$ & 3 & 2 & $2A-B$ & $B-2A$\\ \vspace{1mm}
    $1, 2, -, 0$ & $\langle \frac{1}{|B+2A|} \rangle_{\mathbb{Z}} \times \{0\}$ & 3 & 2 & $-2A-B$ & $2A+B$\\ \vspace{1mm}
    $1, 3, +, 0$ & $\langle \frac{1}{|B-3A|} \rangle_{\mathbb{Z}} \times \{0\}$ & 3 & 2 & $3A - B$ & $B-3A$\\ \vspace{1mm}
    $1, 3, -, 0$ & $\langle \frac{1}{|B+3A|} \rangle_{\mathbb{Z}} \times \{0\}$ & 3 & 2 & $-3A-B$ & $3A+B$\\
    \bottomrule
  \end{tabular}
\end{figure}

Each of our exceptional half-lines is eventually contained in the sector containing the positive $B$-axis, which corresponds to the case where every $\delta=+$.  Each exceptional line generates a progression, again contained in one of the progressions from the statement of Theorem \ref{thm:S_1(3)-second-acc-point}; see Figure~\ref{tab:prog-sectors-100-023} for $A>0$ (as in the previous subsection, $A<0$ gives the same results and these calculations are omitted).

\begin{figure}
  \centering
  \caption{$D$-value progressions for $U^4$ with fixed $1 \leq A \leq 3$ and fixed residue class $B \equiv x \pmod{5A}$ ($x \in \{1,2,\ldots, 5A\}$). We parametrize $B=5At + x$ ($t \in \mathbb{N}$). The second column gives the corresponding progression $5(D(T)-1/10)$. These values (as $t$ ranges) form subprogressions of the two progressions appearing in the statement of Theorem~\ref{thm:S_1(3)-second-acc-point}.}
\label{tab:prog-sectors-100-023}
  \begin{tabular}{lllll}
    \toprule
    $(A, B)$ & Scaled progression & Theorem progression & Change of variable\\
    \midrule
    $A = 1, B \equiv 0 \pmod{5}$ & $4/(5t + 2)$ & $4/(5s + 2)$ & $s = t$\\ % = 4/(5s + 2), where s = t
    $A = 1, B \equiv 1 \pmod{5}$ & $3/(5t + 4)$ & $3/(5s + 4)$ & $s = t$\\ % = 3/(5s + 4), where s = t
    $A = 1, B \equiv 4 \pmod{5}$ & $2/(5t + 6)$ & $4/(5s + 2)$ & $s = 2t+2$\\ % = 4/(5s + 2), where s = 2t+2
    $A = 2, B \equiv 3 \pmod{10}$ & $3/(10t + 9)$ & $3/(5s + 4)$ & $s = 2t + 1$\\ % = 3/(5s + 4), where s = 2t + 1
    $A = 2, B \equiv 7 \pmod{10}$ & $2/(10t + 11)$ & $4/(5s + 2)$ & $s = 4t + 4$\\ % = 4/(5s + 2), where s = 4t + 4
    $A = 3, B \equiv 5 \pmod{15}$ & $2/(15t + 11)$ & $4/(5s + 2)$ & $s = 6t + 4$\\ % = 4/(5s + 2), where s = 6t + 4
    $A = 3, B \equiv 10 \pmod{15}$ & $2/(15t + 16)$ & $4/(5s + 2)$ & $s = 6t + 6$\\ % = 4/(5s + 2), where s = 6t + 6
    \bottomrule
  \end{tabular}
\end{figure}

\subsection{Computing $\mathcal{S}_1(U^5)$}
For $U^5$ we will have to work with sector decompositions instead of only exceptional half-lines.  We parametrize $1$-dimensional subtori of $U^5$ as \[T = \langle A(0, 1, 3) + B(1, 0, 1) \rangle_\mathbb{R}\] for coprime $A$, $B$ with $A \geq 0$. We compute the intersections $U_{i, j, \epsilon} = U^5 \cap \{x_i = \epsilon x_j\}$ and their corresponding $D$-values in Figure~\ref{tab:intersection-planes-u5-s13}. In Figure~\ref{tab:approx-2/5-101-013}, we show the calculations for approximating the critical points $\tau$. Finally, for each pair of residues for $(A,B) \pmod{5}$, we partition $\mathbb{R}_{\geq 0} \times \mathbb{R}$ into sectors, calculate $D(T)$ on each sector, and give the corresponding progressions; see Figure~\ref{tab:prog-sectors-101-013}. 

Figure~\ref{tab:prog-sectors-101-013} tells us that the expression $5(D(T) - 1/10)$ assumes values in the progressions \[\frac{1}{5t + 3}, \quad \frac{2}{5t + 1}, \quad \frac{3}{5t + 4}, \quad \frac{4}{5t + 2}.\] 
Regarding the first progression, we observe that $\{1/(5t + 3): t\in \mathbb{N}\} \subseteq \{4/(5s + 2): s \in \mathbb{N}\}$ (via $s =  4t + 2$) and $\{1/(5t + 3): t\in \mathbb{N}\} \subseteq \{3/(5s + 4): s \in \mathbb{N}\}$ (via $s =  3t + 1$). In fact, $\{1/(5t + 3): t\in \mathbb{N}\}$ is precisely these two progressions containing it. Likewise $\{2/(5t+1): t \in \mathbb{N}\} \subseteq \{4/(5s+2): s \in \mathbb{N}\}$ (via $s = 2t$). Thus, we obtain only the two progressions from the statement of Theorem \ref{thm:S_1(3)-second-acc-point}. 
\begin{figure}[]
  \centering
  \caption{Intersections of $U^5$ with subspaces $\{x_i = \epsilon x_j\}$.}
  \label{tab:intersection-planes-u5-s13}
  \begin{tabular}{llll}
    \toprule
    Subspace & $U_{i, j, \epsilon}$ & $D(U_{i, j, \epsilon})$ & $\tau$ achieving $1/10$\\
    \midrule
    $\{x_1 = x_2\}$ & $\langle(1, 1, 0)\rangle_{\mathbb{R}}$ & 1/2 & $-$\\
    $\{x_1 = -x_2\}$ & $ \langle(-1, 1, -4)\rangle_{\mathbb{R}}$ & 1/10 & $\{2/5, 3/5\}$\\
    $\{x_1 = x_3\}$ & $\langle (2, 1, 2) \rangle_{\mathbb{R}}$ & 1/6 & $-$ \\
    $\{x_1 = -x_3\}$ & $\langle (-2, -3, 2) \rangle_{\mathbb{R}}$ & 1/10 & $\{1/5, 4/5\}$\\
    $\{x_2 = x_3\}$ & $\langle (3, 2, 2) \rangle_{\mathbb{R}}$ & 1/10 & $\{1/5, 4/5\}$\\
    $\{x_2 = -x_3\}$ & $\langle (-1, -2, 2) \rangle_{\mathbb{R}}$ & 1/6 & $-$\\
    \bottomrule
  \end{tabular}
\end{figure} 

\begin{figure}[]
  \centering
  \caption{Approximations to $\tau=1/5,2/5$ for $U^5$. The values of $R^+,R^-$ lie in $\{0,1,2,3,4\}$.}
  \label{tab:approx-2/5-101-013}
  \begin{tabular}{lllllllll}
    \toprule
    $i, j, \epsilon$ & $\tau$ & $\psi(T_{i, j, \epsilon})$ & $\lambda^+$ & $\lambda^-$ & $R^+$ ($+$) & $R^-$ ($+$)& $R^+$ ($-$) & $R^-$ ($-$)\\
    \midrule \vspace{1mm}
    $1, 2, -$ & $2/5$ & $\langle \frac{1}{|A+2B|} \rangle_{\mathbb{Z}}$ & 4 & 1 & $3A+B$ & $2A+4B$ & $2A+4B$ & $3A+B$\\ \vspace{1mm}
    $1, 3, -$ & $1/5$ & $\langle \frac{1}{|3A+B|} \rangle_{\mathbb{Z}}$ & 3 & 2 & $2A+4B$ & $3A+B$ & $3A+B$ & $2A+4B$\\ \vspace{1mm}
    $2, 3, +$ & $1/5$ & $\langle \frac{1}{|B-2A|} \rangle_{\mathbb{Z}}$ & 3 & 2 & $2A+4B$ & $3A+B$ & $3A+B$ & $2A+4B$\\
    \bottomrule
  \end{tabular}
\end{figure}

\begin{figure}[htbp!]
    \centering
    \caption{$D$-values in sectors for $U^5$. The third column records the offsets $5(D(T)-1/10)$.}
    \label{tab:prog-sectors-101-013}
\begin{tabular}{llll}
    \toprule
    $(A, B)$ & Slopes of dividing rays & Offsets & Scaled progressions\\
    \midrule
    $(0, 1)$ & $-1$ & $\frac{2}{3A + B}, \frac{1}{-A - 2B}$ & $\frac{2}{5t + 1}, \frac{1}{5t + 3}$ \\
    $(0, 2)$ & $1, -1$ & $\frac{3}{A + 2B}, \frac{4}{3A + B}, \frac{2}{-A - 2B}$ & $\frac{3}{5t + 4}, \frac{4}{5t + 2}, \frac{2}{5t + 1}$\\
    $(0, 3)$ & $0, -2$ & $\frac{2}{A + 2B}, \frac{4}{2A - B}, \frac{3}{-A - 2B}$ & $\frac{2}{5t + 1}, \frac{4}{5t + 2}, \frac{3}{5t + 4}$\\
    $(0, 4)$ & $0$ & $\frac{1}{A + 2B}, \frac{2}{2A - B}$ & $\frac{1}{5t + 3}, \frac{2}{5t + 1}$\\
    $(1, 0)$ & $0, -2$ & $\frac{2}{A + 2B}, \frac{4}{2A - B}, \frac{3}{-A - 2B}$ & $\frac{2}{5t + 1}, \frac{4}{5t + 2}, \frac{3}{5t + 4}$\\
    $(1, 1)$ & $0$ & $\frac{1}{A + 2B}, \frac{2}{2A - B}$ & $\frac{1}{5t + 3}, \frac{2}{5t + 1}$\\
    $(1, 2)$ & (none) & $0$ & $-$\\
    $(1, 3)$ & $-1$ & $\frac{2}{3A + B}, \frac{1}{-A-2B}$ & $ \frac{2}{5t + 1}, \frac{1}{5t + 3}$\\
    $(1, 4)$ & $0, -1$ & $\frac{3}{A + 2B}, \frac{4}{2A - B}, \frac{2}{-A-2B}$ & $\frac{3}{5t + 4}, \frac{4}{5t + 2}, \frac{2}{5t + 1}$\\
    $(2, 0)$ & $-1$ & $\frac{2}{3A + B}, \frac{1}{-A-2B}$ & $\frac{2}{5t + 1}, \frac{1}{5t + 3}$\\
    $(2, 1)$ & $1, -1$ & $\frac{3}{A + 2B}, \frac{4}{3A + B}, \frac{2}{-A-2B}$ & $\frac{3}{5t + 4}, \frac{4}{5t + 2}, \frac{2}{5t + 1}$\\
    $(2, 2)$ & $0, -2$ & $\frac{2}{A + 2B}, \frac{4}{2A - B}, \frac{3}{-A-2B}$ & $\frac{2}{5t + 1}, \frac{4}{5t + 2}, \frac{3}{5t + 4}$\\
    $(2, 3)$ & $0$ & $\frac{1}{A + 2B}, \frac{2}{2A - B}$ & $\frac{1}{5t + 3}, \frac{2}{5t + 1}$\\
    $(2, 4)$ & (none) & $0$& $-$\\
    $(3, 0)$ & $0$ & $\frac{1}{A + 2B}, \frac{2}{2A - B}$ & $\frac{1}{5t + 3}, \frac{2}{5t + 1}$\\
    $(3, 1)$ & (none) & $0$ & $-$\\
    $(3, 2)$ & $-1$ & $\frac{2}{3A + B}, \frac{1}{-A-2B}$ & $\frac{2}{5t + 1}, \frac{1}{5t + 3}$\\
    $(3, 3)$ & $1, -1$ & $\frac{3}{A + 2B}, \frac{4}{3A + B}, \frac{2}{-A-2B}$ & $\frac{3}{5t + 4}, \frac{4}{5t + 2}, \frac{2}{5t + 1}$\\
    $(3, 4)$ & $0, -2$ & $\frac{2}{A + 2B}, \frac{4}{2A - B}, \frac{3}{-A-2B}$ & $\frac{2}{5t + 1}, \frac{4}{5t + 2}, \frac{3}{5t + 4}$\\
    $(4, 0)$ & $1, -1$ & $\frac{3}{A + 2B}, \frac{4}{3A + B}, \frac{2}{-A-2B}$ & $\frac{3}{5t + 4}, \frac{4}{5t + 2}, \frac{2}{5t + 1}$\\
    $(4, 1)$ & $0, -2$ & $\frac{2}{A + 2B}, \frac{4}{2A - B}, \frac{3}{-A-2B}$ & $\frac{2}{5t + 1}, \frac{4}{5t + 2}, \frac{3}{5t + 4}$\\
    $(4, 2)$ & $0$ & $\frac{1}{A + 2B}, \frac{2}{2A - B}$ & $\frac{1}{5t + 3}, \frac{2}{5t + 1}$\\
    $(4, 3)$ & (none) & $0$ & $-$\\
    $(4, 4)$ & $-1$ & $\frac{2}{3A + B}, \frac{1}{-A-2B}$ & $\frac{2}{5t + 1}, \frac{1}{5t + 3}$\\
    \bottomrule
  \end{tabular}
\end{figure}

\subsection{Computing $\mathcal{S}_1(U^6)$}
We proceed as in the previous subsection. We parametrize $1$-dimensional subtori of $U^6$ as \[T = \langle A(0, 1, 2) + B(1, 1, 0) \rangle_\mathbb{R}\] for coprime $A$, $B$ with $A \geq 0$. As before, we intersect $U^6$ with the subspaces $\{x_i = \epsilon x_j\}$; see Figure~\ref{tab:intersection-planes-u6-s13}. 
Then we approximate critical values of $\tau$; see Figure~\ref{tab:approx-2/5-110-012}. 
Finally, we partition $\mathbb{R}_{\geq 0} \times \mathbb{R}$ into sectors for each pair of equivalence classes for $(A, B)$, calculate the corresponding values of $D(T)$, and represent these values as progressions; see Figure~\ref{tab:prog-sectors-110-012}. As before we get exactly the two progressions from the statement of Theorem \ref{thm:S_1(3)-second-acc-point}.

\begin{figure}
  \centering
  \caption{Intersections of $U^6$ with the subspaces $\{x_i = \epsilon x_j\}$.}
  \label{tab:intersection-planes-u6-s13}
  \begin{tabular}{llll}
    \toprule
    Subspace & $U_{i, j, \epsilon}$ & $D(U_{i, j, \epsilon})$ & $\tau$ achieving $1/10$\\
    \midrule
    $\{x_1 = x_2\}$ & $\langle(1, 1, 4)\rangle_{\mathbb{R}}$ & 1/10 & $\{2/5, 3/5\}$\\
    $\{x_1 = -x_2\}$ & $ \langle(1, -1, -2)\rangle_{\mathbb{R}}$ & 1/6 & $-$\\
    $\{x_1 = x_3\}$ & $\langle (1, 0, 1) \rangle_{\mathbb{R}}$ & 1/2 & $-$ \\
    $\{x_1 = -x_3\}$ & $\langle (-3, 2, 3) \rangle_{\mathbb{R}}$ & 1/10 & $\{1/5, 4/5\}$\\
    $\{x_2 = x_3\}$ & $\langle (-2, 1, 1) \rangle_{\mathbb{R}}$ & 1/6 & $-$\\
    $\{x_2 = -x_3\}$ & $\langle (-4, 1, -1) \rangle_{\mathbb{R}}$ & 1/10 & $\{2/5, 3/5\}$\\
    \bottomrule
  \end{tabular}
\end{figure}

\begin{figure}
  \centering
  \caption{Approximations to $\tau=1/5,2/5$ for $U^6$. The values of $R^+,R^-$ lie in $\{0,1,2,3,4\}$.}
  \label{tab:approx-2/5-110-012}
  \begin{tabular}{lllllllll}
    \toprule
    $i, j, \epsilon$ & $\tau$ & $\psi(T_{i, j, \epsilon})$ & $\lambda^+$ & $\lambda^-$ & $R^+$ ($+$) & $R^-$ ($+$)& $R^+$ ($-$) & $R^-$ ($-$)\\
    \midrule \vspace{1mm}
    $1, 2, +$ & $2/5$ & $\langle \frac{1}{|B-A|} \rangle_{\mathbb{Z}}$ & 4 & 1 & $2A+3B$ & $3A+2B$ & $3A+2B$ & $2A+3B$\\ \vspace{1mm}
    $1, 3, -$ & $1/5$ & $\langle \frac{1}{|3A+2B|} \rangle_{\mathbb{Z}}$ & 3 & 2 & $2A+3B$ & $3A+2B$ & $3A+2B$ & $2A+3B$\\ \vspace{1mm}
    $2, 3, -$ & $2/5$ & $\langle \frac{1}{|4A+B|} \rangle_{\mathbb{Z}}$ & 4 & 1 & $2A+3B$ & $3A+2B$ & $3A+2B$ & $2A+3B$\\
    \bottomrule
  \end{tabular}
\end{figure}

\begin{figure}
    \centering
    \caption{$D$-values in sectors for $U^6$.}
    \label{tab:prog-sectors-110-012}
\begin{tabular}{llll}
    \toprule
    $(A, B)$ & Slopes of dividing rays & Offsets & Scaled progressions\\
    \midrule
    $(0, 1)$ & $-2$ & $\frac{2}{4A + B}$, $\frac{3}{A - B}$ &  $\frac{2}{5t + 1}$, $\frac{3}{5t + 4}$\\
    $(0, 2)$ & $0$ & $\frac{3}{3A + 2B}$, $\frac{1}{A - B}$ &  $\frac{3}{5t + 4}$, $\frac{1}{5t + 3}$\\
    $(0, 3)$ & $-3$ & $\frac{1}{4A + B}$, $\frac{3}{-3A - 2B}$ & $\frac{1}{5t + 3}$, $\frac{3}{5t + 4}$\\
    $(0, 4)$ & $-1$ & $\frac{3}{4A + B}$, $\frac{2}{A - B}$ & $\frac{3}{5t + 4}$, $\frac{2}{5t + 1}$\\
    $(1, 0)$ & $-1$ & $\frac{3}{4A + B}$, $\frac{2}{A - B}$ &  $\frac{3}{5t + 4}$, $\frac{2}{5t + 1}$\\
    $(1, 1)$ & (none) & 0 & $-$ \\
    $(1, 2)$ & $-2$ & $\frac{2}{4A + B}$, $\frac{3}{A - B}$ & $\frac{2}{5t + 1}$, $\frac{3}{5t + 4}$ \\
    $(1, 3)$ & $0$ & $\frac{3}{3A + 2B}$, $\frac{1}{A - B}$ & $\frac{3}{5t + 4}$, $\frac{1}{5t + 3}$ \\
    $(1, 4)$ & $-3$ & $\frac{1}{4A + B}$, $\frac{3}{-3A - 2B}$ & $\frac{1}{5t + 3}$, $\frac{3}{5t + 4}$ \\
    $(2, 0)$ & $-3$ & $\frac{1}{4A + B}$, $\frac{3}{-3A - 2B}$ & $\frac{1}{5t + 3}$, $\frac{3}{5t + 4}$ \\
    $(2, 1)$ & $-1$ & $\frac{3}{4A + B}$, $\frac{2}{A - B}$ & $\frac{3}{5t + 4}$, $\frac{2}{5t + 1}$ \\
    $(2, 2)$ & (none) & 0 & $-$ \\
    $(2, 3)$ & $-2$ & $\frac{2}{4A + B}$, $\frac{3}{A - B}$ & $\frac{2}{5t + 1}$, $\frac{3}{5t + 4}$\\
    $(2, 4)$ & $0$ & $\frac{3}{3A + 2B}$, $\frac{1}{A - B}$ & $\frac{3}{5t + 4}$, $\frac{1}{5t + 3}$ \\
    $(3, 0)$ & $0$ & $\frac{3}{3A + 2B}$, $\frac{1}{A - B}$ & $\frac{3}{5t + 4}$, $\frac{1}{5t + 3}$ \\
    $(3, 1)$ & $-3$ & $\frac{1}{4A + B}$, $\frac{3}{-3A - 2B}$ & $\frac{1}{5t + 3}$, $\frac{3}{5t + 4}$ \\
    $(3, 2)$ & $-1$ & $\frac{3}{4A + B}$, $\frac{2}{A - B}$ & $\frac{3}{5t + 4}$, $\frac{2}{5t + 1}$ \\
    $(3, 3)$ & (none) & 0 & $-$ \\
    $(3, 4)$ & $-2$ & $\frac{2}{4A + B}$, $\frac{3}{A - B}$ & $\frac{2}{5t + 1}$, $\frac{3}{5t + 4}$\\
    $(4, 0)$ & $-2$ & $\frac{2}{4A + B}$, $\frac{3}{A - B}$ & $\frac{2}{5t + 1}$, $\frac{3}{5t + 4}$\\
    $(4, 1)$ & $0$ & $\frac{3}{3A + 2B}$, $\frac{1}{A - B}$ & $\frac{3}{5t + 4}$, $\frac{1}{5t + 3}$ \\
    $(4, 2)$ & $-3$ & $\frac{1}{4A + B}$, $\frac{3}{-3A - 2B}$ & $\frac{1}{5t + 3}$, $\frac{3}{5t + 4}$ \\
    $(4, 3)$ & $-1$ & $\frac{3}{4A + B}$, $\frac{2}{A - B}$ & $\frac{3}{5t + 4}$, $\frac{2}{5t + 1}$ \\
    $(4, 4)$ & (none) & 0 & $-$ \\
    \bottomrule
  \end{tabular}
\end{figure}

\section{A new progression in the spectrum $\mathcal{S}_1(6)$}\label{sec:S_1(6)-prog}

\begin{figure}
  \centering
  \caption{Intersections of $U^7$ with the subspaces $\{x_i = \epsilon x_j\}$.}
  \label{tab:intersection-planes-u-s16}
  \begin{tabular}{llll}
    \toprule
    Subspace & $U_{i, j, \epsilon}$ & $D(U_{i, j, \epsilon})$ & $\tau$ achieving $1/3$\\
    \midrule
    $\{x_1 = x_2\}$ & $\langle(1, 1, 2, 3, 4, 5)\rangle_{\mathbb{R}}$ & 1/3 & $\{1/6, 5/6\}$ \\
    $\{x_1 = -x_2\}$ & $ \langle(1, -1, 0, 1, 2, 1)\rangle_{\mathbb{R}}$ & 1/2 & $-$\\
    $\{x_1 = x_3\}$ & $\langle (1, 0, 1, 2, 3, 3) \rangle_{\mathbb{R}}$ & 1/2 & $-$\\
    $\{x_1 = -x_3\}$ & $\langle (1, -2, -1, 0, 1, -1) \rangle_{\mathbb{R}}$ & 1/2 & $-$\\
    $\{x_1 = x_4\}$ & $\langle (1, -1, 0, 1, 2, 1) \rangle_{\mathbb{R}}$ & 1/2 & $-$\\
    $\{x_1 = -x_4\}$ & $\langle (1, -3, -2, -1, 0, -3) \rangle_{\mathbb{R}}$ & 1/2 & $-$\\
    $\{x_1 = x_5\}$ & $\langle (1, -2, -1, 0, 1, -1) \rangle_{\mathbb{R}}$ & 1/2 & $-$\\
    $\{x_1 = -x_5\}$ & $\langle (1, -4, -3, -2, -1, -5) \rangle_{\mathbb{R}}$ & 1/3 & $\{1/6, 5/6\}$\\
    $\{x_1 = x_6\}$ & $\cup_{\ell = 0}^1 \langle (1, -1, 0, 1, 2, 1) \rangle_{\mathbb{R}} + (0, \frac{\ell}{2}, \frac{\ell}{2}, \frac{\ell}{2}, \frac{\ell}{2}, 0)$ & 1/3 & $1: \{1/6, 1/3, 2/3, 5/6\}$\\
    $\{x_1 = -x_6\}$ & $\cup_{\ell = 0}^1 \langle (1, -2, -1, 0, 1, -1) \rangle_{\mathbb{R}} + (0, \frac{\ell}{2}, \frac{\ell}{2}, \frac{\ell}{2}, \frac{\ell}{2}, 0)$ & 1/3 & $1: \{1/6, 1/3, 2/3, 5/6\}$\\
    $\{x_2 = x_3\}$ & $\langle (0, 1, 1, 1, 1, 2) \rangle_{\mathbb{R}}$ & 1/2 & $-$\\
    $\{x_2 = -x_3\}$ & $\langle (2, -1, 1, 3, 5, 4) \rangle_{\mathbb{R}}$ & 1/3 & $\{1/6, 5/6\}$\\
    $\{x_2 = x_4\}$ & $\cup_{\ell = 0}^1 \langle (0, 1, 1, 1, 1, 2) \rangle_{\mathbb{R}} + (\frac{\ell}{2}, 0, \frac{\ell}{2}, 0, \frac{\ell}{2}, \frac{\ell}{2})$ & 1/3 & $1: \{1/6, 1/3, 2/3, 5/6\}$\\
    $\{x_2 = -x_4\}$ & $\cup_{\ell = 0}^1 \langle (1, -1, 0, 1, 2, 1) \rangle_{\mathbb{R}} + (0, \frac{\ell}{2}, \frac{\ell}{2}, \frac{\ell}{2}, \frac{\ell}{2}, 0)$ & 1/3 & $1: \{1/6, 1/3, 2/3, 5/6\}$\\
    $\{x_2 = x_5\}$ & $\cup_{\ell = 0}^2 \langle (0, 1, 1, 1, 1, 2) \rangle_{\mathbb{R}} + (\frac{\ell}{3}, 0, \frac{\ell}{3}, \frac{2\ell}{3}, 0, 0)$ & 1/3 & $1, 2: \{1/6, 5/6\}$\\
    $\{x_2 = -x_5\}$ & $\langle (2, -3, -1, 1, 3, 0) \rangle_{\mathbb{R}}$ & 1/2 & $-$\\
    $\{x_2 = x_6\}$ & $\langle (-1, 3, 2, 1, 0, 3) \rangle_{\mathbb{R}}$ & 1/2 & $-$\\
    $\{x_2 = -x_6\}$ & $\cup_{\ell = 0}^2 \langle (1, -1, 0, 1, 2, 1) \rangle_{\mathbb{R}} + (0, \frac{\ell}{3}, \frac{\ell}{3}, \frac{\ell}{3}, \frac{\ell}{3}, \frac{2\ell}{3})$ & 1/3 &  $1: \{1/6, 1/2\};$\\ 
    & & & $2: \{1/2, 5/6\}$\\
    $\{x_3 = x_4\}$ & $\langle (0, 1, 1, 1, 1, 2) \rangle_{\mathbb{R}}$ & 1/2 & $-$\\
    $\{x_3 = -x_4\}$ & $\langle (2, -3, -1, 1, 3, 0) \rangle_{\mathbb{R}}$ & 1/2 & $-$\\
    $\{x_3 = x_5\}$ & $\cup_{\ell = 0}^1 \langle (0, 1, 1, 1, 1, 2) \rangle_{\mathbb{R}} + (\frac{\ell}{2}, 0, \frac{\ell}{2}, 0, \frac{\ell}{2}, \frac{\ell}{2})$ & 1/3 & $1: \{1/6, 1/3, 2/3, 5/6\}$\\
    $\{x_3 = -x_5\}$ & $\cup_{\ell = 0}^1 \langle (1, -2, -1, 0, 1, -1) \rangle_{\mathbb{R}} + (0, \frac{\ell}{2}, \frac{\ell}{2}, \frac{\ell}{2}, \frac{\ell}{2}, 0)$ & 1/3 & $1: \{1/6, 1/3, 2/3, 5/6\}$\\
    $\{x_3 = x_6\}$ & $\langle (1, -2, -1, 0, 1, -1) \rangle_{\mathbb{R}}$ & 1/2 & $-$\\
    $\{x_3 = -x_6\}$ & $\langle (3, -4, -1, 2, 5, 1) \rangle_{\mathbb{R}}$ & 1/3 & $\{1/6, 5/6\}$\\
    $\{x_4 = x_5\}$ & $\langle (0, 1, 1, 1, 1, 2) \rangle_{\mathbb{R}}$ & 1/2 & $-$\\
    $\{x_4 = -x_5\}$ & $\langle (2, -5, -3, -1, 1, -4) \rangle_{\mathbb{R}}$ & 1/3 & $\{1/6, 5/6\}$\\
    $\{x_4 = x_6\}$ & $\langle (1, -1, 0, 1, 2, 1) \rangle_{\mathbb{R}}$ & 1/2 & $-$\\
    $\{x_4 = -x_6\}$ & $\langle (3, -5, -2, 1, 4, -1) \rangle_{\mathbb{R}}$ & 1/3 & $\{1/6, 5/6\}$\\
    $\{x_5 = x_6\}$ & $\langle (1, 0, 1, 2, 3, 3) \rangle_{\mathbb{R}}$ & 1/2 & $-$\\
    $\{x_5 = -x_6\}$ & $\cup_{\ell = 0}^2 \langle (1, -2, -1, 0, 1, -1) \rangle_{\mathbb{R}} + (0, \frac{\ell}{3}, \frac{\ell}{3}, \frac{\ell}{3}, \frac{\ell}{3}, \frac{2\ell}{3})$ & 1/3 & $1: \{1/2, 5/6\};$\\
    & & & $2: \{1/6, 1/2\}$\\
    \bottomrule
  \end{tabular}
\end{figure}

\begin{figure}
  \centering
  \caption{Approximations to the various values of $\tau$ for $U^7$.  Throughout, we consider the sector in which the half-line $\{6\} \times \mathbb{R}_{>0}$ is (eventually) contained; this sector has $\delta=-$ for the first row and $\delta=+$ for all other rows. In rows three, four, eight, and nine, $a \equiv B \pmod{2}$; in rows five and six, $a \equiv A \pmod{2}$; in rows ten and eleven, $a \equiv \delta B \pmod{3}$; in rows twelve and thirteen $a \equiv -\delta B \pmod{3}$; and in rows seventeen and eighteen, $a \equiv \delta A \pmod{3}$.}
  \label{tab:approx-s1(6)}
  \begin{tabular}{llllllll}
    \toprule
    $i, j, \epsilon,\ell$ & $\tau$ & $\psi(T_{i, j, \epsilon,\ell})$ & $\lambda^+$ & $\lambda^-$ & $R^+$ & $R^-$& $D(T_{i, j, \epsilon, \ell}) - \frac{1}{3}$\\
    \midrule \vspace{1mm}
    $1, 2, +, 0$ & $1/6$ & $\langle \frac{1}{|A-B|} \rangle_{\mathbb{Z}} \times \{0\}$ & 5 & 1 & $A+5B \leadsto 1$ & $5A+B \leadsto 5$ & $\frac{5}{6(B-A)}$\\ \vspace{1mm}
    $1, 5, -, 0$ & $1/6$ & $\langle \frac{1}{|4A+B|} \rangle_{\mathbb{Z}} \times \{0\}$ & 5 & 1 & $2A+5B \leadsto 1$ & $4A+B \leadsto 5$ & $\frac{5}{6(4A+B)}$\\ \vspace{1mm}
    $1, 6, +, 1$ & $1/6$ & $\left(\frac{a}{2|A+B|} + \langle \frac{1}{|A+B|} \rangle_{\mathbb{Z}}\right) \times \{\frac{1}{2}\}$ & 2 & 1 & $5A+2B \leadsto 4$ & $A+4B \leadsto 2$ & $\frac{2}{6(A+B)}$\\ \vspace{1mm}
    $1, 6, +, 1$ & $1/3$ & $\left(\frac{a}{2|A+B|} + \langle \frac{1}{|A+B|} \rangle_{\mathbb{Z}}\right) \times \{\frac{1}{2}\}$ & 1 & 2 & $4A+B \leadsto 5$ & $2A+5B \leadsto 1$ & $\frac{2}{6(A+B)}$\\ \vspace{1mm}
    $1, 6, -, 1$ & $1/6$ & $\left(\frac{a}{2|2A+B|} + \langle \frac{1}{|2A+B|} \rangle_{\mathbb{Z}}\right) \times \{\frac{1}{2}\}$ & 2 & 1 & $A+5B \leadsto 1$ & $5A+B \leadsto 5$ & $\frac{2}{6(2A+B)}$\\ \vspace{1mm}
    $1, 6, -, 1$ & $1/3$ & $\left(\frac{a}{2|2A+B|} + \langle \frac{1}{|2A+B|} \rangle_{\mathbb{Z}}\right) \times \{\frac{1}{2}\}$ & 1 & 2 & $5A+4B \leadsto 2$ & $A+2B \leadsto 4$ & $\frac{2}{6(2A+B)}$\\ \vspace{1mm}
    $2, 3, -, 0$ & $1/6$ & $\langle \frac{1}{|A+2B|} \rangle_{\mathbb{Z}} \times \{0\}$ & 5 & 1 & $5A+4B \leadsto 2$ & $A+2B \leadsto 4$ & $\frac{4}{6(A+2B)}$\\ \vspace{1mm}
    $2, 4, +, 1$ & $1/6$ & $\left(\frac{a}{2|A|} + \langle \frac{1}{|A|} \rangle_{\mathbb{Z}}\right) \times \{\frac{1}{2}\}$ & 2 & 1 & $5A+3B \leadsto 3$ & $A+3B \leadsto 3$ & $\frac{3}{6A}$\\\vspace{1mm}
    $2, 4, +, 1$ & $1/3$ & $\left(\frac{a}{2|A|} + \langle \frac{1}{|A|} \rangle_{\mathbb{Z}}\right) \times \{\frac{1}{2}\}$ & 1 & 2 & $4A+3B \leadsto 3$ & $2A+3B \leadsto 3$ & $\frac{3}{6A}$\\\vspace{1mm}
    $2, 5, +, 1$ & $1/6$ & $\left(\frac{a}{3|A|} + \langle \frac{1}{|A|} \rangle_{\mathbb{Z}}\right) \times \{\frac{1}{3}\}$ & 1 & 1 & $5A+2B \leadsto 4$ & $A+4B \leadsto 2$ & $\frac{2}{6A}$\\\vspace{1mm}
    $2, 5, +, 1$ & $5/6$ & $\left(\frac{a}{3|A|} + \langle \frac{1}{|A|} \rangle_{\mathbb{Z}}\right) \times \{\frac{1}{3}\}$ & 1 & 1 & $A+2B \leadsto 4$ & $5A+4B \leadsto 2$ & $\frac{2}{6A}$\\\vspace{1mm}
    $2, 6, -, 1$ & $1/6$ & $\left(\frac{a}{3|A+B|} + \langle \frac{1}{|A+B|} \rangle_{\mathbb{Z}}\right) \times \{\frac{1}{3}\}$ & 1 & 1 & $5A+3B \leadsto 3$ & $A+3B \leadsto 3$ & $\frac{3}{6(A+B)}$\\\vspace{1mm}
    $2, 6, -, 1$ & $1/2$ & $\left(\frac{a}{3|A+B|} + \langle \frac{1}{|A+B|} \rangle_{\mathbb{Z}}\right) \times \{\frac{1}{3}\}$ & 1 & 1 & $3A+B \leadsto 5$ & $3A+5B \leadsto 1$ & $\frac{1}{6(3A+B)}$\\\vspace{1mm}
    $3, 6, -, 0$ & $1/6$ & $\langle \frac{1}{|4A+3B|} \rangle_{\mathbb{Z}} \times \{0\}$ & 5 & 1 & $2A+3B \leadsto 3$ & $4A+3B \leadsto 3$ & $\frac{3}{6(4A+3B)}$\\\vspace{1mm}
    $4, 5, -, 0$ & $1/6$ & $\langle \frac{1}{|5A+2B|} \rangle_{\mathbb{Z}} \times \{0\}$ & 5 & 1 & $A+4B \leadsto 2$ & $5A+2B \leadsto 4$ & $\frac{4}{6(5A+2B)}$\\\vspace{1mm}
    $4, 6, -, 0$ & $1/6$ & $\langle \frac{1}{|5A+3B|} \rangle_{\mathbb{Z}} \times \{0\}$ & 5 & 1 & $A+3B \leadsto 3$ & $5A+3B \leadsto 3$ & $\frac{3}{6(5A+3B)}$\\\vspace{1mm}
    $5, 6, -, 1$ & $1/2$ & $\left(\frac{a}{3|2A+B|} + \langle \frac{1}{|2A+B|} \rangle_{\mathbb{Z}}\right) \times \{\frac{1}{3}\}$ & 1 & 1 & $2A+3B \leadsto 3$ & $4A+3B \leadsto 3$ & $\frac{3}{6(2A+B)}$\\
    $5, 6, -, 1$ & $5/6$ & $\left(\frac{a}{3|2A+B|} + \langle \frac{1}{|2A+B|} \rangle_{\mathbb{Z}}\right) \times \{\frac{1}{3}\}$ & 1 & 1 & $4A+B \leadsto 5$ & $2A+5B \leadsto 1$ & $\frac{1}{6(2A+B)}$\\
    \bottomrule
  \end{tabular}
\end{figure}

The goal of this section of to prove Theorem~\ref{thm:S_1(6)-prog}, which states that $1/3+1/6\Prog(6,11) \subseteq \mathcal{S}_1(6)$.  This example is noteworthy because it provides a new progression, beyond the progression $1/3+1/6\Prog(6,7)$ predicted by~\eqref{eq:old-conj}.  The progression $1/3+1/6\Prog(6,11)$ appears in the relative spectrum of
$$U^7:=\langle (1, 0, 1, 2, 3, 3), (0, 1, 1, 1, 1, 2) \rangle_{\mathbb{R}}.$$
We remark that this choice of generators bears some resemblance to the generators of $U^2 \subseteq (\mathbb{R}/\mathbb{Z})^4$, which (after permuting the coordinates) can be written as $(1,0,1,1),(0,1,1,2)$ (but we were not able to find an ``analogous'' torus in $(\mathbb{R}/\mathbb{Z})^8$).

%The subtori \(\langle (1, 0, 0, 0, 0, 0), (0, 1, 2, 3, 4, 5) \rangle\) leads to the progression \(1/3 + \Prog(36, 6)\), as shown in \cite{thesis}, and our goal was to identify a new progression in the spectrum \(\mathcal{S}_1(6)\). We aimed to find a two-dimensional subtori \(U \subset (\mathbb{R}/\mathbb{Z})^6\) with \(D(U) = 1/3\), distinct from \(\langle (1, 0, 0, 0, 0, 0), (0, 1, 2, 3, 4, 5) \rangle\). After analyzing subtori in \((\mathbb{R}/\mathbb{Z})^4\) with \(D = 1/4\), we computationally explored planes that followed the pattern of \(U^2 = \langle (1, 0, 1, 1), (1, 1, 0, 2) \rangle\). Permuting the coordinates gave generators like \((1, 0, 1, 1)\) and \((0, 1, 1, 2)\). In this search, we found \[U = \langle (1, 0, 1, 2, 3, 3), (0, 1, 1, 1, 1, 2) \rangle,\] where $D(U) = 1/3$. Despite our efforts, we did not observe this pattern continuing beyond \(\mathcal{S}_1(6)\). 

We will show that $\mathcal{S}_1(U^7)$ contains the set $1/3 + 1/6\Prog(6, 11)$. We parametrize $1$-dimensional subtori of $U^7$ as \[T=\langle A(1, 0, 1, 2, 3, 3)+B(0, 1, 1, 1, 1, 2)\rangle_{\mathbb{R}},\] where $A,B$ are coprime and $A \geq 0$, and we check that this choice of generators satisfies $\eqref{eq:lattice-condition}$.  As before, we compute the intersections $U_{i, j \epsilon} = U^7 \cap \{x_i = \epsilon x_j\}$; see Figure \ref{tab:intersection-planes-u-s16}. 

This computation differs from previous ones in several ways. First, there are several pairs of triples $(i, j, \epsilon)$ and $(i', j', \epsilon')$ such that \(U_{i, j, \epsilon}\) and \(U_{i', j', \epsilon'}\) are identical; obviously $D(T_{i,j,\epsilon})=D(T_{i',j',\epsilon'})$ in such cases, and we will omit these redundant computations.
%. If two subgroups \(U_{i, j, \epsilon}\) and \(U_{i', j', \epsilon'}\) are identical, then the corresponding approximations from the cyclic subgroups \(T_{i, j, \epsilon}\) and \(T_{i', j', \epsilon'}\) will also be identical. As a result, we only need to consider one such pair \((i, j, \epsilon)\). For example, $U_{1, 6, +} \cong U_{2, 4, -}$, $U_{1, 6, -} \cong U_{3, 5, -}$, and $U_{2, 4, +} \cong U_{3, 5, +}$, so in Figure \ref{tab:approx-s1(6)}, we only include rows corresponding to the first term in each of these isomorphisms. 
Second, there are some quadruples $(i,j,\epsilon,\ell)$ with several values of $\tau$ which must be analyzed individually, even after we account for the usual $x \mapsto -x$ symmetries.  This is visible in Figure \ref{tab:approx-s1(6)}, where the triples \((1, 6, +, 1)\), \((1, 6, -, 1)\), and \((2, 4, +, 1)\) each have two rows corresponding to different values of $\tau$.  For the triples $(2, 6, -)$ and $(5, 6, -)$ (which correspond to subgroups $U_{i, j, \epsilon}$ with three components), the values of $\tau$ no longer come in pairs $(\tau,-\tau)$ on each connected component $U_{i,j,\epsilon,\ell}$ (although of course a value $\tau$ on $U_{i,j,\epsilon,\ell}$ still corresponds to a value $-\tau$ on $U_{i,j,\epsilon,-\ell}$).  For triples $(i,j,\epsilon)$ where $U_{i,j,\epsilon}$ has $3$ components, the symmetry discussed in Section~\ref{sec:other-symmetries} guarantees that $D(U_{i,j,\epsilon,1})=D(U_{i,j,\epsilon,2})$, so in such cases we will analyze only $\ell=1$.

Recall that we are concerned only with finding a single progression in $\mathcal{S}_1(U^7)$, not with computing the entire relative spectrum.  So we will restrict our attention to the half-line yielding this progression, namely, the half-line consisting of pairs $(A,B)$ with $A=6$ and $B \equiv 5 \pmod{6}$ (with $B>6$); note that such $A,B$ are always coprime.  We analyze only the sector containing this half-line.
%In this computation, while we could partition \(\mathbb{R}_{\geq 0} \times \mathbb{R}\) into sectors, our primary goal is to show that \(\mathcal{S}_1(6)\) contains a new progression. Therefore, we run the computation for a specific sector and equivalence classes. We fix \(A = 6\), \(B > 6\), and \(B \equiv 5 \pmod{6}\), ensuring \(A\) and \(B\) are coprime.
Comparing the expressions in the last column of Figure~\ref{tab:approx-s1(6)}, we find that
%The \(D\)-value of such $1$-dimensional subtori \(T\) is 
\[D(T) = \frac{1}{3} + \frac{1}{6(2A + B)},\] and substituting \(A = 6\) and \(B = 6s + 5\) (for $s \in \mathbb{N}$) gives \[D(T)=\frac{1}{3} + \frac{1}{6(6s + 17)}.\] The choice $(A,B)=(5,1)$ leads to $D(T) = 1/3 + 1/(6\cdot11)$, and the choice $(A,B)=(6,5)$ leads to $D(T) = 1/3 + 1/(6\cdot17)$.  Altogether, these values comprise the progression $1/3+1/6\Prog(6,11)$; this proves Theorem~\ref{thm:S_1(6)-prog}. 

We remark that $\mathcal{S}_1(U^7)$ also contains the progression $1/3+1/6\Prog(6,7)$; this gives a new way to achieve the progression in \eqref{eq:old-conj}.  This progression comes from the sector $\{0 \leq A \leq B\}$ with the modular constraints $(A,B) \equiv (1,0) \pmod{6}$.  A further analysis along the lines of the previous two sections (not shown in full detail here) reveals that $\mathcal{S}_1(U^7)$ does not contain any other progressions.

%We remark that $\mathcal{S}_1(U^7)$ also contains the set \(1/3 + 1/\Prog(36, 6)\), a progression previously noted in \cite{thesis}, but it's interesting that it reappears here. For \(A = 1\), \(B \equiv 0 \pmod{6}\), and \(B > 1\), which lies in the same sector as the earlier computation, we obtain the progression \(1/3 + 1/\Prog(72, 6)\). Similarly, for \(A = 7\), \(B \equiv 0 \pmod{6}\), and \(B > 7\), we find the progression \(1/3 + 1/\Prog(72, 42)\). Taking the union of these gives \(1/3 + 1/\Prog(36, 6)\). Notably, these are the only two progressions we derive from \(U\).
% TODO: Add the points so we get the entire progression 36, 30 

\section{Finite and infinite relative spectra}\label{sec:finite}
In this final section, we make a few brief remarks about when relative spectra are finite.  Let $n \geq 3$, and let $U \subseteq (\mathbb{R}/\mathbb{Z})^n$ be a $2$-dimensional proper subtorus.  Write $$Z(U):=\{x \in U: D(x)=D(U)\}$$ 
for the locus of points where $U$ achieves its $D$-value. 
Notice that $Z(U)$ is the intersection of $U$ with the hollow cube $\{x \in (\mathbb{R}/\mathbb{Z})^n: D(x)=D(U)\}$.  We can express $Z(U)$ as the union of the intersections of $U$ with the various facets of this hollow cube.  Each such intersection has dimension at most $1$ (otherwise $U$ would be contained in a translate of a coordinate hyperplane), and we conclude that $Z(U)$ consists of a finite union of points and (closed) line segments.  Of course these points and the endpoints of these segments all have rational coordinates.

Our criterion for the finiteness of $\mathcal{S}_1(U)$ concerns whether or not $Z(U)$ contains non-parallel line segments; notice that the ``parallel-ness'' of line segments is the same in $(\mathbb{R}/\mathbb{Z})^n$ and in $U$ (considered as a $2$-dimensional torus).

\begin{proposition}\label{prop:finiteness-criterion}
Let $n \geq 3$ be a natural number, and let $U \subseteq (\mathbb{R}/\mathbb{Z})^n$ be a $2$-dimensional proper subtorus.  The relative spectrum $\mathcal{S}_1(U)$ is finite if and only if the set $Z(U)$ contains non-parallel line segments.
\end{proposition}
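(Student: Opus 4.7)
The strategy for both directions combines two ingredients: the fact from \cite{vikram} that $D(T) \to D(U)$ as $A^2+B^2 \to \infty$, and the elementary observation that $D(T) = D(U)$ if and only if $T$ meets $Z(U)$.

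For the ``if'' direction, suppose $Z(U)$ contains non-parallel segments $S_1, S_2$ with directions $d_1, d_2$, and set $\phi := \angle(d_1, d_2) > 0$. Fix generators $u, v$ of $U$ satisfying \eqref{eq:lattice-condition}, and use the resulting isomorphism $U \cong \mathbb{R}^2/\mathbb{Z}^2$ (sending $u \mapsto (1,0), v \mapsto (0,1)$) to parametrize $1$-dimensional subtori as $T = \langle Au + Bv\rangle_\mathbb{R}$ by coprime $(A, B)$. In these coordinates $T$ lifts to a family of parallel lines of direction $(A, B)$ in $\mathbb{R}^2$ with perpendicular spacing $1/\sqrt{A^2+B^2}$. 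The triangle inequality on the circle of directions gives $\max(\angle((A,B), d_1), \angle((A,B), d_2)) \geq \phi/2$, so at least one of $d_1, d_2$ (call it $d_i$) makes angle $\geq \phi/2$ with $(A, B)$. The lift of $S_i$ then has perpendicular extent at least $|S_i|\sin(\phi/2)$ across the family, so as soon as $\sqrt{A^2+B^2} \geq 1/(\min(|S_1|,|S_2|)\sin(\phi/2))$ the lift must cross some line, giving $T \cap S_i \neq \emptyset$ and so $D(T) = D(U)$. Only finitely many coprime pairs lie below this uniform threshold, so $\mathcal{S}_1(U)$ is finite.

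For the contrapositive of the ``only if'' direction, assume $Z(U)$ does not contain two non-parallel segments. Since $D(T) \to D(U)$, it suffices to produce infinitely many coprime $(A, B)$ with $T \cap Z(U) = \emptyset$, for then the resulting values $D(T) > D(U)$ form a sequence of distinct elements of $\mathcal{S}_1(U)$ accumulating at $D(U)$. If $Z(U)$ contains no segments, it is a finite set of rational points $p_1, \ldots, p_m$, and each condition ``$p_k \in T$'' is a single linear congruence on $(A, B)$ modulo the denominator of $p_k$, hence cuts out a proper subgroup of $\mathbb{Z}^2$; since no infinite abelian group is a finite union of proper subgroups, the complement contains infinitely many coprime pairs. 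If instead $Z(U)$ has segments but they are all parallel to a common direction $d$, each such segment lies in some component $U_{i,j,\epsilon,\ell}$ whose identity-component direction is $d$; in $(A, B)$-space this corresponds to a primitive lattice vector $(A_d, B_d)$ with $EA_d + FB_d = 0$ for the linear form $q = EA + FB$ from Proposition~\ref{prop:cyclic-subgroup-parameters}. Pick a half-line $\{(A_0, B_0) + t(A_d, B_d) : t \geq 0\}$ whose offset $(A_0, B_0)$ makes $q$ a fixed nonzero integer $q_0$ with $|q_0|$ smaller than $1/|S|$ for every segment $S$ of $Z(U)$. On this half-line the intersections of $T$ with each relevant component consist of only $|q_0|$ equally spaced points, too sparse to cover any segment; by restricting $(A, B)$ to an appropriate residue class we can fix the phase so that these points avoid the union of segments on every relevant component, and by imposing further congruence conditions we can also avoid the finitely many isolated points of $Z(U)$. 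Lemma~\ref{lem:coprime} then supplies infinitely many coprime pairs meeting all these constraints, each yielding $T$ disjoint from $Z(U)$.

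The main obstacle is the final case of the reverse direction: one must simultaneously (a) arrange $|q_0|$ smaller than the reciprocal of the maximum segment length, so that the equally spaced intersection points can leave a gap covering each segment; (b) choose residues so that on every relevant component the actual phase of these points avoids the union of segments (possible because no such union fills the full circle of a component); and (c) still satisfy the coprimality condition on infinitely many pairs. Items (a) and (b) reduce to elementary bookkeeping with the phase formulas of Proposition~\ref{prop:cyclic-subgroup-parameters}, and (c) is exactly the content of Lemma~\ref{lem:coprime} applied on the chosen half-line.
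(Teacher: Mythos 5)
The ``if'' direction of your argument is essentially the same as the paper's: lift $T$ to a family of parallel lines of perpendicular spacing $1/\sqrt{A^2+B^2}$, observe that one of the two non-parallel segments must make angle $\geq \phi/2$ with the direction of $T$, and conclude that large-volume $T$'s must cross a segment. This is fine.

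The ``only if'' direction has a genuine gap. In your first case you appeal to the claim that ``no infinite abelian group is a finite union of proper subgroups,'' but this is false: $\mathbb{Z}^2$ is exactly the union of its three index-$2$ subgroups $\{a \equiv 0\}$, $\{b \equiv 0\}$, $\{a\equiv b \pmod 2\}$. So it does not follow from abstract group theory alone that the congruences coming from the points of $Z(U)$ leave infinitely many (or even one) coprime pair uncovered; you need a geometric input to show the covering is not exhaustive. The paper supplies exactly that input: it observes that $U$ contains a $1$-dimensional subtorus $T^*$ lying in a coordinate hyperplane of $(\mathbb{R}/\mathbb{Z})^n$, and that $D$ is identically $1/2 > D(U)$ on $T^*$, so $T^*$ is disjoint from $Z(U)$; this pins down a ``good'' direction a priori and makes the whole counting question moot. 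The paper then chooses coordinates identifying $T^*$ with an axis, places the whole of $Z(U)$ inside a $1$-dimensional subgroup $\Gamma'$ not parallel to $T^*$, and explicitly writes down subtori $T_s = \langle (a, b + a/(qs))\rangle_\mathbb{R}$ (a slight perturbation of the direction of $\Gamma'$) that meet $\Gamma'$ only on $T^*$ and hence miss $Z(U)$. This uniform construction handles both your cases at once.

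Your second case (all segments parallel) has a further problem: you assert that each segment of $Z(U)$ lies in some component $U_{i,j,\epsilon,\ell}$, but this need not be true. A segment of $Z(U)$ is a locus where $D$ is locally constant, which forces some single coordinate $x_i$ to be constant equal to $1/2 \pm D(U)$; that places the segment on a translate of $U \cap \{x_i = 0\}$, a coordinate-face subgroup, \emph{not} on any $U \cap \{x_i = \epsilon x_j\}$. Two coordinates need only tie at the \emph{endpoints} of the segment, not along its interior. Consequently the linear form $q = EA + FB$ from Proposition~\ref{prop:cyclic-subgroup-parameters} that you want to make small is not the form relevant to the segment's direction, and the half-line bookkeeping that follows does not apply as stated. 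If you want to salvage a case analysis of this kind, you should work with the subgroups $U \cap \{x_i = 0\}$ (and you would still need the $T^*$ observation to handle the isolated points); but the paper's single perturbation construction is substantially cleaner.
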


\begin{proof}
First, suppose that $Z(U)$ contains two non-parallel line segments.  To show that $\mathcal{S}_1(U)$ is finite, it suffices to show that all but finitely many $1$-dimensional subtori $T \subseteq U$ intersect the union of these line segments and thus satisfy $D(T)=D(U)$.  Fix an identification of $U$ with $(\mathbb{R}/\mathbb{Z})^2$, and consider the images $s_1,s_2$ of the two non-parallel line segments.  Let $\ell$ be the minimum of the lengths of $s_1,s_2$, and let $L_1,L_2$ denote the lines containing $s_1,s_2$ (respectively).  Let $0<\theta\leq \pi/2$ be the measure of the angle formed by $L_1,L_2$.

Now, let $T' \subseteq (\mathbb{R}/\mathbb{Z})^2$ be a $1$-dimensional subtorus.  It forms an angle of at least $\theta/2$ with some $L_i$.  Then the height of $s_i$ in the direction perpendicular to $T'$ is at least
$$\ell \sin(\theta/2).$$
The perpendicular distance between consecutive leaves of $T'$ is $1/\vol(T')$, where $\vol(T')$ denotes the volume (i.e., length) of $T'$.  Thus, if $\vol(T') \geq 1/(\ell \sin(\theta/2))$, then $T'$ must intersect $s_i$, and we conclude that the corresponding $1$-dimensional subtorus $T \subseteq U$ has $D(T)=D(U)$.  Since there are only finitely many $1$-dimensional subtori with volume below any given threshold, it follows that all but finitely many $1$-dimensional subtori $T \subseteq U$ have $D(T)=D(U)$, as desired.

Second, suppose that $Z(U)$ does not contain any non-parallel line segments.  To show that $\mathcal{S}_1(U)$ is infinite, it suffices to show that there are infinitely many $1$-dimensional subtori $T \subseteq U$ that are disjoint from $Z(U)$: Indeed, such $T$'s have $D(T)>D(U)$, and there are infinitely many distinct such values since $D(T) \to D(U)$ as $\vol(T) \to \infty$.  By assumption, $Z(U)$ is contained in a finite union of rational parallel lines and hence is contained in a $1$-dimensional subgroup $\Gamma$ of $U$.

We claim that $U$ contains at least two non-parallel $1$-dimensional subtori that are contained in the coordinate hyperplanes of $(\mathbb{R}/\mathbb{Z})^n$.  Indeed, for each $1 \leq i \leq n$, the intersection $U \cap \{x_i=0\}$ is a $1$-dimensional subgroup, and the identity components of these intersections cannot all be identical because then this common identity component would consist of only the point $0$. Thus, there is a $1$-dimensional subtorus $T^*$ of $U$ that is contained in a coordinate hyperplane of $(\mathbb{R}/\mathbb{Z})^n$ and is not parallel to $\Gamma$.  Fix an identification of $U$ with $(\mathbb{R}/\mathbb{Z})^2$ such that $T^*$ is identified with the $x_2$-axis; let $\Gamma'$ denote the image of $\Gamma$ under this identification.  Since $D(T^*)=1/2>D(U)$, we know that $Z(U)$ is disjoint from $T^*$, and it follows that the image of $Z(U)$ is contained in $\Gamma' \setminus (\{0\} \times (\mathbb{R}/\mathbb{Z}))$.  Hence it suffices to find infinitely many $1$-dimensional subtori $T \subseteq (\mathbb{R}/\mathbb{Z})^2$ whose intersections with $\Gamma'$ are contained in $\{0\} \times (\mathbb{R}/\mathbb{Z})$.

The choice of $T^*$ guarantees that $\Gamma'$ has some finite slope $b/a$, where $a,b$ are coprime integers with $a>0$.  Let $q \in \mathbb{N}$ be such that the leaves of $\Gamma'$ are spaced out $1/q$ apart vertically.  Now, for each $s \in \mathbb{N}$, consider the $1$-dimensional subtorus
$$T_s:=\langle (a,b+a/(qs)) \rangle_{\mathbb{R}}.$$
As we travel along $T_s$ starting at $(0,0)$, the first intersection with $\Gamma'$ occurs at the point
$$(s/a)(a,b+1/(qs))=(s,sb/a+1/q) \in \{0\} \times (\mathbb{R}/\mathbb{Z}),$$
so in fact $T_s \cap \Gamma' \subseteq \{0\} \times (\mathbb{R}/\mathbb{Z})$, as desired.
\end{proof}
We remark that \cite{thesis} established $\mathcal{S}_1(n-1) \subset \acc \mathcal{S}_1(n)$ by showing that $\mathcal{S}_1(U' \times (\mathbb{R}/\mathbb{Z}))$ is infinite for every proper $1$-dimensional subtorus $U' \subseteq (\mathbb{R}/\mathbb{Z})^{n-1}$; the construction in the second part of the proof of Proposition~\ref{prop:finiteness-criterion} is a generalization of this argument.

It is not a priori obvious that the geometric condition for finiteness in Proposition~\ref{prop:finiteness-criterion} is ever satisfied.  In fact, one can show by ad-hoc arguments that it is never satisfied for $n \leq 6$.  For $n=7$, however, there are examples like the following.  Consider the $2$-dimensional proper subtorus $$U^8:=\langle (1,2,3,2,0,0,0),(0,0,0,2,1,2,3) \rangle_{\mathbb{R}}.$$
A tedious but routine calculation gives that $D(U^8)=3/10$, and that $Z(U^8)$ consists of the points $a (1,2,3,2,0,0,0)+b (0,0,0,2,1,2,3)$ where $(a,b)$ either lies on one of the eight line segments
$$\{2/5,3/5\} \times ([1/5,4/15] \cup [11/15,4/5]), \quad ([1/5,4/15] \cup [11/15,4/5]) \times \{2/5,3/5\}.$$
or is one of the four points $(1/5,1/5),(2/5,2/5),(3/5,3/5),(4/5,4/5)$.
It follows from Proposition~\ref{prop:finiteness-criterion} that $\mathcal{S}_1(U^8)$ is finite, i.e., $\mathcal{S}_1(U^8)$ does not contain any progressions.  Using a computer to calculate $D(T)$ for low-volume proper $1$-dimensional subtori $T \subseteq U^8$ reveals that $\mathcal{S}_1(U^8)$ contains (at least) the values $7/22, 11/34, 17/54, 23/74$ in addition to $3/10$.  Hence, for $\mathcal{S}_1(U^8)$, the finite symmetric difference in Theorem~\ref{thm:relative-spectrum-improved} is necessary.

\section*{Acknowledgements}
We thank Vikram Giri and Mayank Pandey for helpful conversations. Both authors were supported in part by NSF Graduate Research Fellowships (grant DGE--2039656).


\begin{thebibliography}{99}

\bibitem{BS} J. Barajas and O. Serra, The lonely runner with seven runners.  \emph{Electr. J. Combin.}, \textbf{15.1} (2008),
\#R48.

\bibitem{BW} U. Betke and J. M. Wills, Untere Schranken f\"ur zwei diophantische Approximations-Funktionen.  \emph{Monatsh. Math.}, \textbf{76.3} (1972), 214--217.

\bibitem{BGGST} W. Bienia, L. Goddyn, P. Gvozdjak, A. Seb\H{o}, and M. Tarsi, Flows, view obstructions, and the
lonely runner.  \emph{J. Combin. Theory, Ser. B}, \textbf{72.1} (1998), 1--9.

\bibitem{bohman} T. Bohman, R. Holzman, and D. Kleitman, Six lonely runners. \emph{Electr. J. Combin.}, \textbf{8.2} (2001), \# R3.

\bibitem{cusick} T. W. Cusick, View-obstruction problems.  \emph{Aequationes Math.}, \textbf{9.2} (1973), 165--170.

\bibitem{CP} T. W. Cusick and  C. Pomerance, View-obstruction problems, III.  \emph{J. Number Theory}, \textbf{19.2}
(1984), 131--139.

\bibitem{computer} H. T. Fan and A. Sun, Amending and continuing the Lonely Runner Spectrum Conjecture.  Preprint ArXiv:2306.10417v1 (2023).

\bibitem{vikram} V. Giri and N. Kravitz, The structure of Lonely Runner spectra.  To appear.

\bibitem{thesis} N. Kravitz, Barely lonely runners and very lonely runners: a refined approach to the Lonely Runner Problem.  \emph{Combin. Theory}, \textbf{1} (2021), \#17.

\bibitem{malikiosis} R.D. Malikiosis, F. Santos, and M. Schymura, Linearly-exponential checking is enough for the Lonely Runner Conjecture and some of its variants.  Preprint ArXiv:2411.06903v1 (2024).

\bibitem{Renault}  J. Renault, View-obstruction: a shorter proof for 6 lonely runners.  \emph{Discrete Math.}, \textbf{287.1--3}
(2004), 93--101.

\bibitem{tao} T. Tao, Some remarks on the Lonely Runner Conjecture.  \emph{Contrib. Discrete Math.}, \textbf{13.2} (2018), 1--31.

\bibitem{wills} J. M. Wills, Zwei S\"{a}tze \"{u}ber inhomogene diophantische Approximation von Irrationalzahlen. \emph{Monatsh. Math.}, \textbf{71.3} (1967), 263--269.

\end{thebibliography}
\end{document}